\newtheorem{problem}{\sc Problem}
\newtheorem{definition}{\sc Definition}[section]
\newtheorem{theorem}{\sc Theorem}[section]
\newtheorem{lemma}{\sc Lemma}[section]
\newtheorem{coro}{\sc Corollary}[section]
\newtheorem{prop}{\sc Proposition}[section]
\NewDocumentCommand{\pFq}{O{}mmmmm}
 {
  % #2 = left subscript, #3 = right subscript
  % #4 = top, #5 = bottom, #6 = right
  \group_begin:
  \keys_set:nn { hypergeometric } { #1 }
  \hypergeometric_print:nnnnn { #2 } { #3 } { #4 } { #5 } { #6 }
  \group_end:
 }
\NewDocumentCommand{\hypergeometricsetup}{m}
 {
  \keys_set:nn { hypergeometric } { #1 }
 }
\newtheoremstyle{custom}%
  {}                % Space above
  {}                % Space below
  {\itshape}        % Body font
  {}                % Indent amount (empty = no indent)
  {\bfseries}       % Theorem head font
  {}                % Punctuation after theorem head (none here)
  {.5em}            % Space after theorem head
  {}                % Theorem head spec (can be left empty, meaning `normal`)
\theoremstyle{custom}
\newenvironment{custom}[1]{%
    \noindent\textbf{$\star$ #1}.% Title with a leading dot
}{%
    \par % End with a new paragraph
}
\begin{document}
\title[The Nikiforov-Uvarov method]{The Nikiforov-Uvarov method}
\author{G. Gordillo-N\'u\~nez}
\address{CMUC, Department of Mathematics, University of Coimbra, Portugal}
\email{up202310693@edu.fc.up.pt}
\subjclass[2010]{}
\date{\today}
\keywords{}
\begin{abstract}
We review the so-called Nikiforov-Uvarov method along with some basic results about classical orthogonal polynomials and hypergeometric functions related to the hypergeometric differential equation. The method is employed to address certain eigenvalue problems that appear in quantum mechanics, namely, time-independent Schr\"odinger equation, paying special attention to properly and completely characterising its spectrum. Finally, some potentials are discussed and solved.
\end{abstract}
\maketitle
\section{Introduction}
As stated on their influential book {\it Special Functions of Mathematical Physics: A Unified Introduction with Applications}, see \cite{Nikiforov1988SpecialFO}, by Russian mathematicians and physicists Arnold F. Nikiforov and Vasilii B. Uvarov (\cite[p. 1]{Nikiforov1988SpecialFO}):
\vspace{2mm}
\begin{changemargin}{0.8cm}{0.8cm} 
{\em Many important problems in theoretical and mathematical physics lead to the $[$so-called generalized hypergeometric or GHE \!$]$ differential equation
$$
u''+\dfrac{\widetilde{\psi}(z)}{\phi(z)}u'+\dfrac{\widetilde{\phi}(z)}{\phi^2(z)}u=0,
$$
where $\phi(z)$ and $\widetilde{\phi}(z)$ are polynomials of degree at most 2, and $\widetilde{\psi}(z)$ is a polynomial of degree at most 1. Equations of this form arise, for example, $[...]$ in the discussion of such fundamental problems of quantum mechanics as the motion of a particle in a spherically symmetric field, the harmonic oscillator, the solution of the Schr\"odinger, Dirac and Klein-Gordon equations for a Coulomb potential, and $[...]$ in typical problems of atomic, molecular and nuclear physics.}
\end{changemargin}
\vspace{2mm}
To solve such differential equation, they elegantly reduce it to a simpler hypergeometric differential equation or HDE of the form 
$$
\phi(z)y''+\psi(z)y'+\lambda y = 0,
$$
where $\psi(z)$ and $\phi(z)$ are polynomials of degrees at most 1 and 2, respectively, and $\lambda$ is a constant;
by taking $u=\chi(z)y$ and choosing an appropriate $\chi(z)$ obtained in a simple manner. Subsequently, they completely solve the hypergeometric differential equation and, among its solutions, classical orthogonal polynomials, in particular, and hypergeometric functions, more generally, are encountered.  

The aim of the present paper is to review such methodology in relation to solving the so-called time-independent Schr\"odinger equation (of capital importance in Quantum Mechanics) while taking special care that its spectrum is completely and properly characterised. In addition, we present three examples of exactly solvable systems via this so-called Nikiforov-Uvarov or NU method. The (time-independent) onedimensional Schr\"odinger equation or SE reads
\begin{equation}
\label{schro}
    -\dfrac{\hbar^2}{2m}\dfrac{\mathrm{d^2}\Psi}{\mathrm{d}x^2}(x)+V(x)\Psi(x)=E\Psi(x), \quad x \in \mathbb{R},
\end{equation}
where $\hbar$ is the reduced Planck constant, $m$ is the mass of the particle subjected to the potential $V(x)$, $\Psi(x)$ is its wave function and $E$ is its energy which acts as an eigenvalue or eigenenergy. Out of simplicity, adimensionalize Eq. \eqref{schro} and obtain
\begin{equation}
\dfrac{\mathrm{d^2}\Psi}{\mathrm{d}x^2}(x)+\left(\varepsilon-v(x)\right)\Psi(x)= 0, \quad x \in \mathbb{R},
\label{schroadi}
\end{equation}
were $\varepsilon$ and $v(x)$ are the reduced eigenenergy and potential, respectively. The problem, thus, consists of finding every pair $(\varepsilon, \Psi_{\varepsilon}(x))$ of {\em physical states} satisfying Eq. \eqref{schroadi}. To establish what we understand as a physical state, firstly, define
\begin{equation}
\label{v+-}
    v_{\text{min}} \coloneqq \min_{x \in \mathbb{R}}{v(x)}, \quad v_{\pm} \coloneqq \lim_{x \to \pm \infty} v(x), \quad v_{\text{max}} \coloneqq \max_{x \in \mathbb{R}}{v(x)},
\end{equation}
and, subsequently, the following two energy regions:
\begin{enumerate}[label=\text{R\arabic*}]
\item \label{R1} For $\varepsilon \in (v_{\text{min}},v_-)$, we distinguish the region of the so-called {\it bound states}, where the particle is classically confined in a finite region of space and the eigenvalue problem should have a non-trivial function, $\Psi(x)$, satisfying that $\Psi(x) \in L^{2}(\mathbb{R})$, this is, the space of square-integrable functions in $\mathbb{R}$.
\item \label{R2} For $\varepsilon  > v_-$, we distinguish the region of the so-called {\it scattering states}: 
the particle is able to reach $-\infty$ and/or $+\infty$ and square-integrability condition on its eigenfunction is dropped and substituted by $\Psi(x) \in L^{\infty}(\mathbb{R})$, this is, the space of bounded functions on $\mathbb{R}$.
\end{enumerate}
Notice that energy region $\varepsilon \in (-\infty, v_{\text{min}}]$ is excluded as, in a classical regime, particle are unable to propagate with such energy. So that {\em observability}\footnote{For a proper introduction to Quantum Mechanics including the concept of observability, reader is referred to \cite[Chapter 2]{Takhtajan2008QuantumMF}, for instance.} of the system is achieved, i.e, so that its associated states form a complete orthonormal system (COS) for $L^2(\mathbb{R})$, the problem of solving SE can be understood, in the adequate context, as an eigenvalue problem as follows:
\begin{problem}
\label{schroProb}
Find all non-trivial pairs $\left(\varepsilon,\Psi_{\varepsilon}(x)\right)$ with $\varepsilon > v_-$ satisfying Eq. \eqref{schroadi}
for which
\begin{itemize}
\item $\Psi_{\varepsilon}(x) \in L^{2}(\mathbb{R})$ whenever $\varepsilon \in (v_{\text{\text{min}}},v_-)$, and
\item $\Psi_{\varepsilon}(x) \in L^{\infty}(\mathbb{R})$ whenever $\varepsilon \in (v_-,\infty)$.
\end{itemize}
\end{problem}

Hence, the NU method for solving Problem \ref{schroProb} consists, roughly, of: $(i)$ transform SE into a GHE by means of a so-called universal change of variable, $(ii)$ transform GHE into a HDE by setting $\Psi_{\varepsilon}(x)=\chi(x;\varepsilon)y_{\varepsilon}(x)$ and, $(iii)$ find $y_{\varepsilon}(x)$ so that $\Psi_{\varepsilon}(x)$ is of $L^2(\mathbb{R})$ or $L^{\infty}(\mathbb{R})$ accordingly to particularities of regions \ref{R1} and \ref{R2}, respectively. In such manner, we will express the bound states in terms of the Classical Orthogonal Polynomials (OPs) by means of Theorem \ref{eigenTheo} and Table \ref{table1}; and, for the rest of the scattering states, in terms of general hypergeometric functions by means of Propositions \ref{gaussProp}, \ref{conlfuentProp}, \ref{conlfuentProp2} and \ref{hermiteProp} and Table \ref{linIndHyper}, mainly.

Two main issues arise by following this approach. Transforming SE into a GHE does not follow a constructive scheme. However, in many instances, it is achieved in a rather simple way by {\it straightforward} changes of variable. In addition, after this change of variable, say, $x=\xi(s)$, square-integrability and boundedness condition shift in shape and do not usually coincide for the reduced eigenvalue problem associated to the final HDE. Indeed, the eigenvalue problem that is addressed by Theorem \ref{eigenTheo} generally reads as follows:
\begin{problem}
\label{eigenProb}
Find all values of $\lambda \in \mathbb{C}$ for which
$$
\phi(s) \dfrac{\mathrm{d}^2y}{\mathrm{d}s^2}(s) + \psi(s) \dfrac{\mathrm{d}y}{\mathrm{d}s}(s) +\lambda y(s)=0, \quad s \in I,
$$
has non-trivial solutions, $y_{\lambda}(s)$, satisfying that $y_{\lambda}(s) \in L^{2}(I;\omega)$, where $\omega(s)$ is any solution of $$\dfrac{\mathrm{d}(\phi\omega)}{\mathrm{d}s}(s)=\psi(s)\omega(s),$$
i.e, its associated Pearson equation (in the ``old'' classical sense\footnote{The notion of {\it classicality} has been extended following the algebraic theory developed by Maroni, see, for instance, \cite[Chapter 8]{castillo2024first}. In the context of this paper, we work with the more traditional understanding of {\it classicality}, which is typically associated with the positive-definite cases: Jacobi, Laguerre, and Hermite polynomials. This traditional interpretation becomes particularly relevant when analyzing physical systems.}).
\end{problem} 
Now, given that the change of variable $x=\xi(s)$ is not generally linear, it will not be true that $$\Psi_{\varepsilon}(x)=\chi(x;\varepsilon)y_{\varepsilon}(x) \in L^2(\mathbb{R}) \iff y_{\varepsilon}(s) \in L^{2}(I;\omega).$$ However, we will be able to sort this out (together with further complexities) while dealing with each particular case. We will properly restate these issues when actually addressing the Eigenvalue Problem \ref{schroProb} in Section \ref{solving}.

In Section \ref{reduction}, the reduction of a GHE into an HDE is reviewed . In Section \ref{solving}, the HDE is solved, first, by means of Classical OPs and, secondly, by hypergeometric functions. These results are employed to address both the bound and the scattering states. In Section \ref{applications}, three potential are solved by means of the NU method and, finally, their peculiarities in terms of the method are discussed among further considerations.

\section{The reduction\label{reduction}}
As stated, we depart from the notion of generalized hypergeometric equation:
\begin{definition}
Let $I \subset \mathbb{R}$ be an open interval not necessarily bounded. A differential equation of the form
\begin{equation}
\dfrac{\mathrm{d^2}\Psi}{\mathrm{d}x^2}(x)+\frac{\widetilde{\psi}(x)}{\phi(x)}\dfrac{\mathrm{d}\Psi}{\mathrm{d}x}(x)+
\frac{\widetilde{\phi}(x)}{\phi^2(x)}\Psi(x)=0, \quad x \in  I,
\label{GHE}
\end{equation}
is said to be a generalized hypergeometric equation or GHE whenever polynomials $\phi(x)$, $\widetilde{\phi}(x) \in \mathcal{P}_2$ and $\widetilde{\psi}(x)\in \mathcal{P}_1$. 
\end{definition}

By setting $\Psi(x)=\chi(x)y(x)$ for an adequate $\chi(x)$, $y(x)$ solves another, hopefully simpler, GHE:
\begin{lemma}
Let $\chi(x)$ satisfy
\begin{equation}
    \dfrac{1}{\chi(x)}\dfrac{\mathrm{d}\chi}{\mathrm{d}x}(x) = \frac{\pi(x)}{\phi(x)}, \quad x \in I,
    \label{phi}
\end{equation}
for an arbitrary $\pi(x) \in \mathcal{P}_1$. Then, whenever $\Psi(x)$ is a solution of Eq. \eqref{GHE}, $y(x)$ given by $$\Psi(x)=\chi(x)y(x),$$ solves another GHE
\begin{equation} 
\dfrac{\mathrm{d}^2y}{\mathrm{d}x^2}(x)+\frac{\psi(x)}{\phi(x)}\dfrac{\mathrm{d}y}{\mathrm{d}x}(x)+
\frac{\overline{\phi}(x)}{\phi^2(x)}y(x)=0, \quad x \in I,
\label{GHEy}
\end{equation}
where
\begin{equation}
\begin{split}
&\psi(x)=\widetilde{\psi}(x)+2\pi(x) \in \mathcal{P}_1, \\
&\overline{\phi}(x)=\pi^2(x)+
\pi(x)\left(\widetilde{\psi}(x)-\dfrac{\mathrm{d}\phi}{\mathrm{d}x}(x)\right)+\dfrac{\mathrm{d}\pi}{\mathrm{d}x}(x)\phi(x)+\widetilde{\phi}(x) \in \mathcal{P}_2.
\end{split}
\label{tau,sigmabar}
\end{equation}
\end{lemma}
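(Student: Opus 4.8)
The plan is a direct substitution followed by a degree check; there is no conceptual difficulty, only bookkeeping. First I would write $\Psi(x)=\chi(x)y(x)$ and differentiate twice, so that $\Psi'=\chi' y+\chi y'$ and $\Psi''=\chi'' y+2\chi' y'+\chi y''$. Substituting into Eq. \eqref{GHE} and dividing through by $\chi(x)$ — which is legitimate because Eq. \eqref{phi} is a first-order linear ODE whose solution, unique up to a nonzero multiplicative constant, is the nowhere-vanishing exponential $\chi(x)=\exp\int^x \pi(t)/\phi(t)\,\mathrm{d}t$ on any subinterval avoiding the zeros of $\phi$ — yields
$$
y''+\left(2\,\frac{\chi'}{\chi}+\frac{\widetilde{\psi}}{\phi}\right)y'+\left(\frac{\chi''}{\chi}+\frac{\widetilde{\psi}}{\phi}\,\frac{\chi'}{\chi}+\frac{\widetilde{\phi}}{\phi^2}\right)y=0 .
$$

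Next I would eliminate the logarithmic-derivative factors by means of Eq. \eqref{phi}, i.e. $\chi'/\chi=\pi/\phi$. The first-order coefficient is immediate: $2\chi'/\chi+\widetilde{\psi}/\phi=(2\pi+\widetilde{\psi})/\phi=\psi/\phi$ with $\psi=\widetilde{\psi}+2\pi$. For the zeroth-order coefficient I would first compute $\chi''/\chi=(\chi'/\chi)'+(\chi'/\chi)^2=(\pi/\phi)'+\pi^2/\phi^2=(\pi'\phi-\pi\phi'+\pi^2)/\phi^2$, then add $(\widetilde{\psi}/\phi)(\pi/\phi)+\widetilde{\phi}/\phi^2$; collecting everything over the common denominator $\phi^2$ produces exactly $\overline{\phi}/\phi^2$ with $\overline{\phi}=\pi^2+\pi(\widetilde{\psi}-\phi')+\pi'\phi+\widetilde{\phi}$, which is the formula in Eq. \eqref{tau,sigmabar}.

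Finally I would check the degree bounds. Since $\widetilde{\psi},\pi\in\mathcal{P}_1$, plainly $\psi=\widetilde{\psi}+2\pi\in\mathcal{P}_1$. For $\overline{\phi}$: $\pi^2\in\mathcal{P}_2$; $\widetilde{\psi}-\phi'\in\mathcal{P}_1$ since $\phi\in\mathcal{P}_2$ forces $\phi'\in\mathcal{P}_1$, hence $\pi(\widetilde{\psi}-\phi')\in\mathcal{P}_2$; $\pi'$ is a constant so $\pi'\phi\in\mathcal{P}_2$; and $\widetilde{\phi}\in\mathcal{P}_2$ by hypothesis. Therefore $\overline{\phi}\in\mathcal{P}_2$ and Eq. \eqref{GHEy} is again a GHE.

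The only step deserving a word of care is the division by $\chi$: one should simply remark that, $\chi$ being nowhere zero, the substitution $\Psi=\chi y$ is invertible, so in fact $\Psi$ solves \eqref{GHE} if and only if $y$ solves \eqref{GHEy} on the relevant interval. Everything else is a routine identity, so I do not anticipate any real obstacle.
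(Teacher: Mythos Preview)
Your proof is correct and follows essentially the same direct-substitution approach as the paper: both write $\Psi=\chi y$, divide by $\chi$, and compute $\chi''/\chi$ from the logarithmic-derivative relation $\chi'/\chi=\pi/\phi$ to identify the coefficients $\psi$ and $\overline{\phi}$. Your version is in fact slightly more careful, since you explicitly justify the division by $\chi$ and verify the degree bounds, both of which the paper leaves implicit.
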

\begin{proof}
Assume $\Psi(x)$ solves Eq. \eqref{GHE} and put $\Psi(x)=\chi(x)y(x)$ into it. It yields
\begin{equation*}
    \begin{split}
    \dfrac{\mathrm{d}^2y}{\mathrm{d}x^2}(x)+&\left(\dfrac{2}{\chi(x)}\dfrac{\mathrm{d}\chi}{\mathrm{d}x}(x)+\frac{\widetilde{\psi}(x)}{\phi(x)}\right)\dfrac{\mathrm{d}y}{\mathrm{d}x}(x)+ \\
    &\left(\dfrac{1}{\chi(x)}\dfrac{\mathrm{d}^2\chi}{\mathrm{d}x^2}(x)+\dfrac{1}{\chi(x)}\dfrac{\mathrm{d}\chi}{\mathrm{d}x}(x)\frac{\widetilde{\psi}(x)}{\phi(x)}+
    \frac{\widetilde{\phi}(x)}{\phi^2(x)}\right)y(x)=0. 
    \end{split}
\end{equation*}
Now, consider Eq. \eqref{phi} and, from it, obtain
$$
\dfrac{1}{\chi(x)}\dfrac{\mathrm{d}^2\chi}{\mathrm{d}x^2}(x)=\dfrac{\mathrm{d}\pi}{\mathrm{d}x}(x)\dfrac{\phi(x)}{\phi^2(x)}+\dfrac{1}{\chi(x)}\dfrac{\mathrm{d}\chi}{\mathrm{d}x}(x)\dfrac{\pi(x)-\dfrac{\mathrm{d}\phi}{\mathrm{d}x}(x)}{\phi(x)}.
$$ 
Hence, above differential equation effectively transforms into Eq. \eqref{GHEy}. \end{proof}
Finally, we force new $\overline{\phi}(x)$ in Eq. \eqref{tau,sigmabar} to be proportional to $\phi(x)$ by means of $\pi(x)$ which, at this point, was an arbitrary linear polynomial. Hence, whenever, say, $\overline{\phi}(x)=\lambda \phi(x)$ for some $\lambda \in \mathbb{C}$, Eq. \eqref{GHEy} reduces to a hypergeometric differential equation:
\begin{definition}
Let $I \subset \mathbb{R}$ be an open interval not necessarily bounded. A differential equation of the form
\begin{equation} 
\phi(x) \dfrac{\mathrm{d}^2y}{\mathrm{d}x^2}(x) + \psi(x)\dfrac{\mathrm{d}y}{\mathrm{d}x}(x) +\lambda y(x)=0, \quad x \in I,
\label{HDE}
\end{equation}
is said to be a hypergeometric differential equation or HDE whenever polynomials $\phi(x) \in \mathcal{P}_2$, $\psi(x)\in \mathcal{P}_1$ and $\lambda \in \mathbb{C}$. Solutions of such differential equation are said to be functions of hypergeometric type.
\end{definition}
To choose the appropriate $\pi(x)$, consider the following Lemma:
\begin{lemma}
Define polynomial $P_2(x;k)$ depending parametrically on $k \in \mathbb{C}$ as
\begin{equation} 
P_2(x;k) \coloneqq \dfrac{1}{4}\left(\dfrac{\mathrm{d}\phi}{\mathrm{d}x}(x)-\widetilde{\psi}(x)\right)^2-\widetilde{\phi}(x)+k\phi(x) \in \mathcal{P}_2.
\label{P2}
\end{equation}
Then, whenever $k_0\in\mathbb{C}$ is such that the discriminant $\Delta P_2(k_0)$ vanishes, $\pi(x)$ given by
\begin{equation}
\pi(x) = \dfrac{1}{2}\left(\dfrac{\mathrm{d}\phi}{\mathrm{d}x}(x)-\widetilde{\psi}(x)\right) \pm \sqrt{P_2(z;k_0)},
\label{pichoice}
\end{equation}
is a polynomial in $\mathcal{P}_1$ satisfying that, whenever 
\begin{equation}
\lambda = k_0+\dfrac{\mathrm{d}\pi}{\mathrm{d}x}(x) \in \mathbb{C},
\label{lambda}
\end{equation}
$\overline{\phi}(x)=\lambda \phi(x)$.
\label{P2lemma}
\end{lemma}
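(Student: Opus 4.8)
The plan is to read the requirement $\overline{\phi}(x)=\lambda\phi(x)$ as an equation that \emph{determines} $\pi(x)$, exploiting that $\pi\in\mathcal{P}_1$ forces $\mathrm{d}\pi/\mathrm{d}x$ to be a constant. First I would set $k\coloneqq\lambda-\tfrac{\mathrm{d}\pi}{\mathrm{d}x}\in\mathbb{C}$ and substitute into the formula for $\overline\phi$ in Eq.~\eqref{tau,sigmabar}: since $\tfrac{\mathrm{d}\pi}{\mathrm{d}x}$ is constant, the term $\tfrac{\mathrm{d}\pi}{\mathrm{d}x}\phi$ absorbs into $-\lambda\phi$, and $\overline\phi=\lambda\phi$ becomes the polynomial identity
$$
\pi(x)^2+\pi(x)\Big(\widetilde\psi(x)-\tfrac{\mathrm{d}\phi}{\mathrm{d}x}(x)\Big)+\widetilde\phi(x)-k\,\phi(x)=0 .
$$
The spurious unknown $\tfrac{\mathrm{d}\pi}{\mathrm{d}x}$ has thus been traded for the single scalar parameter $k$.

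Second, I would treat this, for each fixed $x$, as a quadratic equation in the scalar $\pi(x)$ and solve it by the quadratic formula. Its discriminant is $\big(\tfrac{\mathrm{d}\phi}{\mathrm{d}x}-\widetilde\psi\big)^2-4\big(\widetilde\phi-k\phi\big)=4P_2(x;k)$ with $P_2$ as in Eq.~\eqref{P2}, so the two solution branches are exactly
$$
\pi(x)=\tfrac12\Big(\tfrac{\mathrm{d}\phi}{\mathrm{d}x}(x)-\widetilde\psi(x)\Big)\pm\sqrt{P_2(x;k)},
$$
matching Eq.~\eqref{pichoice}. The crux — the one genuinely non-formal step — is that $\pi$ must be an honest element of $\mathcal{P}_1$, not merely a pointwise square root. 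This holds if and only if $P_2(\,\cdot\,;k)$ is the square of a polynomial of degree $\le 1$, which for a member of $\mathcal{P}_2$ is equivalent to the vanishing of its discriminant $\Delta P_2(k)$. Hence choosing $k=k_0$ with $\Delta P_2(k_0)=0$ gives $P_2(x;k_0)=c\,(x-r)^2$ for suitable $c,r\in\mathbb{C}$ (or $P_2(x;k_0)$ is constant in the degenerate case where the leading coefficient of $P_2$ in $x$ vanishes), so $\sqrt{P_2(x;k_0)}=\pm\sqrt{c}\,(x-r)\in\mathcal{P}_1$ and, on either branch, $\pi\in\mathcal{P}_1$. I would explicitly flag these edge cases (leading coefficient of $P_2$ vanishing; $\Delta P_2$ constant) to keep the statement uniform.

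Finally, with $\pi$ so chosen it satisfies the displayed quadratic identity with $k=k_0$, i.e.\ $\pi^2+\pi\big(\widetilde\psi-\tfrac{\mathrm{d}\phi}{\mathrm{d}x}\big)+\widetilde\phi=k_0\phi$ identically in $x$. Plugging this into Eq.~\eqref{tau,sigmabar} yields
$$
\overline\phi(x)=k_0\phi(x)+\tfrac{\mathrm{d}\pi}{\mathrm{d}x}(x)\,\phi(x)=\Big(k_0+\tfrac{\mathrm{d}\pi}{\mathrm{d}x}(x)\Big)\phi(x),
$$
which is $\lambda\phi(x)$ precisely for $\lambda=k_0+\tfrac{\mathrm{d}\pi}{\mathrm{d}x}$ as in Eq.~\eqref{lambda}, completing the argument. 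The only real obstacle is the perfect-square/discriminant equivalence that guarantees $\pi\in\mathcal{P}_1$; once that is in place, everything else is substitution into Eq.~\eqref{tau,sigmabar}.
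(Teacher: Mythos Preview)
Your proposal is correct and follows essentially the same route as the paper's proof: both reduce $\overline\phi=\lambda\phi$ to the quadratic identity $\pi^2+\pi(\widetilde\psi-\phi')+\widetilde\phi=k\phi$, invoke the perfect-square/discriminant equivalence to force $\pi\in\mathcal P_1$, and then read off $\lambda=k_0+\pi'$. Your write-up is considerably more detailed than the paper's terse version (which merely asserts the equivalence and the verification), and your explicit treatment of the degenerate cases is a welcome addition.
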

\begin{proof}
It can be verified that $P_2(z;k) \in \mathcal{P}_2$ is a perfect square if and only if $\Delta P_2(k)$ is zero. Hence, for $\Delta P_2(k_0)=0$, $\pi(x) \in \mathcal{P}_1$. In addition, it solves
\begin{equation}
\label{piEq}
\overline{\phi}(x)=\lambda \phi(x) =\pi^2(x)+
\pi(x)\left(\widetilde{\psi}(x)-\dfrac{\mathrm{d}\phi}{\mathrm{d}x}(x)\right)+\dfrac{\mathrm{d}\pi}{\mathrm{d}x}(x)\phi(x)+\widetilde{\phi}(x).
\end{equation}
Thence, Eq. \eqref{GHE} is effectively reduced into Eq. \eqref{HDE}.
\end{proof}
Therefore, the standard NU approach for reducing a Eq. \eqref{GHE} into a Eq. \eqref{HDE} consists of: $(i)$ consider $P_2(x;k)$ given by Eq. \eqref{P2} and find $k_0 \in \mathbb{C}$ such that $\Delta P_2(k_0)=0$; $(ii)$ compute $\pi(x)$ using Eq. \eqref{pichoice}; and $(iii)$ compute $\chi(x)$, $\psi(x)$ and $\lambda$ using Eqs. \eqref{phi}, \eqref{tau,sigmabar} and \eqref{lambda}, respectively. Hence, solutions $\Psi(x)$ of Eq. \eqref{GHE} are found in the form $\Psi(x)=\chi(x)y(x)$ where $y(x)$ solves \eqref{HDE}. Of course, in certain occasions, step $(i)$ can be excluded and $\pi(x)$ be found by setting $\pi(x)=ax+b$, equating coefficients and solving the system.

Generally, condition $\Delta P_2(k_0)=0$ yields two possible $k_0 \in \mathbb{C}$ which, together with the ambiguity in the sign of $\pi(x)$ in Eq. \eqref{pichoice} gives a total of four different HDEs whose resolution is equivalent to that of the original GHE. In particular cases, there may be only two or, even, none. Subsequently, we will see that it will be vital to choose the adequate HDE to solve.

In following Section \ref{solving}, we will solve the HDE by means of, in particular, Classical OPs and, generally, hypergeometric functions. Those familiar with the standard theory (or uninterested by it) may skip most of the results (which are basic, minimal and included out of completeness) and only focus on Subsections \ref{boundstates} {\it ``Addressing the bound states''} and \ref{scattetingstates} {\it ``Addressing the scattering states''} where the Eigenvalue Problem \ref{schroProb} is actually addressed.

\section{Solving the eigenvalue problem \label{solving}}
In Subsection \ref{classicalOPs}, we distinguish Classical OPs as not only the simplest solutions but also the only square-integrable ones. Finally, in Subsection \ref{hyperFuncs}, we generally solve Eq. \eqref{HDE} in terms of arbitrary hypergeometric functions.
\subsection{Classical OPs\label{classicalOPs}}
The main characteristic of the HDE is its so-called {\it hypergeometric property}: given one of its solution, $y(x)$, any of its derivative also satisfies an HDE. Furthermore, in most cases, the solutions of the new HDE will be expressed in terms of those of the original one.
\begin{lemma}
\label{lemma:hypergeometricity}
Let $y(x)$ be a solution of Eq. \eqref{HDE}. Then, any of its m-th derivative, $$v_m(x) \coloneqq \dfrac{\mathrm{d}^{m}y}{\mathrm{d}x^m}(x), \quad m=0,1,2,\dots,$$ solve another HDE of the form
\begin{equation}
\phi(x) \dfrac{\mathrm{d}^2v}{\mathrm{d}x}(x) + \psi_m(x) \dfrac{\mathrm{d}v}{\mathrm{d}x}(x) +\mu_m v(x)=0, \quad x \in I,
	\label{HDEm}    
\end{equation}
where
\begin{equation}
    \psi_m(x) \coloneqq \psi(x)+m\dfrac{\mathrm{d}\phi}{\mathrm{d}x}(x) \in \mathcal P_1, \quad \mu_m \coloneqq \lambda+m\dfrac{\mathrm{d}\psi}{\mathrm{d}x}(x)+\frac{m(m-1)}{2}\dfrac{\mathrm{d}^2\phi}{\mathrm{d}x^2}(x)\in \mathbb{C}.
\label{taum,mum}
\end{equation}
Furthermore, let $w(x)$ be a solution of Eq. \eqref{HDEm}. Then, $$w(x)=\dfrac{\mathrm{d}^{m}y}{\mathrm{d}x^m}(x),$$ for some solution $y(x)$ of \eqref{HDE}, whenever $\mu_k \neq 0,\, k=0,1,\dots,m$.
\end{lemma}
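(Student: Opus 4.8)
The plan is to prove the two assertions separately: the ``forward'' claim that $v_m$ solves \eqref{HDEm} by an induction on $m$, and the ``converse'' by a downward induction that reconstructs a solution $y$ of \eqref{HDE} from $w$ one antiderivative at a time.

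For the forward direction I would take the base case $m=0$ to be the hypothesis itself, and for the inductive step, assuming $v_m$ solves \eqref{HDEm}, differentiate that equation once and regroup. Writing $v_{m+1}=\tfrac{\mathrm{d}v_m}{\mathrm{d}x}$, this gives an equation of the same shape for $v_{m+1}$ with
\[
\psi_{m+1}=\psi_m+\dfrac{\mathrm{d}\phi}{\mathrm{d}x},\qquad \mu_{m+1}=\mu_m+\dfrac{\mathrm{d}\psi_m}{\mathrm{d}x};
\]
since $\phi\in\mathcal{P}_2$ and $\psi_m\in\mathcal{P}_1$, we have $\mathrm{d}\phi/\mathrm{d}x\in\mathcal{P}_1$ and $\mathrm{d}\psi_m/\mathrm{d}x$ constant, so $\psi_{m+1}\in\mathcal{P}_1$ and $\mu_{m+1}\in\mathbb{C}$. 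It then remains to solve these recursions with initial data $\psi_0=\psi$, $\mu_0=\lambda$: the first telescopes to $\psi_m=\psi+m\,\mathrm{d}\phi/\mathrm{d}x$, and substituting this into the second and summing the resulting telescoping sum (using that $\mathrm{d}^2\phi/\mathrm{d}x^2$ is constant) yields $\mu_m=\lambda+m\,\mathrm{d}\psi/\mathrm{d}x+\tfrac{m(m-1)}{2}\,\mathrm{d}^2\phi/\mathrm{d}x^2$, which is exactly \eqref{taum,mum}.

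For the converse, the key observation is that the closed forms just obtained yield the adjacency identities $\psi_m=\psi_{m-1}+\mathrm{d}\phi/\mathrm{d}x$ and $\mu_m=\mu_{m-1}+\mathrm{d}\psi_{m-1}/\mathrm{d}x$, which allow the differentiation step to be inverted algebraically. Given a solution $w=v_m$ of \eqref{HDEm} and assuming $\mu_{m-1}\neq 0$, I would define
\[
v_{m-1}\coloneqq-\dfrac{1}{\mu_{m-1}}\left(\phi\,\dfrac{\mathrm{d}w}{\mathrm{d}x}+\psi_{m-1}\,w\right),
\]
and then check two things. First, that $\tfrac{\mathrm{d}v_{m-1}}{\mathrm{d}x}=w$: differentiating the definition, replacing $\phi\,\mathrm{d}^2w/\mathrm{d}x^2$ by $-\psi_m\,\mathrm{d}w/\mathrm{d}x-\mu_m w$ via \eqref{HDEm}, and invoking the two adjacency identities, the coefficient of $\mathrm{d}w/\mathrm{d}x$ collapses to $\mathrm{d}\phi/\mathrm{d}x-\psi_m+\psi_{m-1}=0$ and the coefficient of $w$ to $\mathrm{d}\psi_{m-1}/\mathrm{d}x-\mu_m=-\mu_{m-1}$, leaving precisely $w$. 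Second, that $v_{m-1}$ solves \eqref{HDEm} with $m$ replaced by $m-1$: once $\tfrac{\mathrm{d}v_{m-1}}{\mathrm{d}x}=w$ (hence $\tfrac{\mathrm{d}^2v_{m-1}}{\mathrm{d}x^2}=\tfrac{\mathrm{d}w}{\mathrm{d}x}$) is known, this is immediate from the defining relation for $v_{m-1}$. Iterating downward for $k=m-1,m-2,\dots,0$ — which is where $\mu_k\neq 0$ is used at each stage, so that $\mu_0,\dots,\mu_{m-1}$ nonzero suffices and the stated hypothesis $\mu_k\neq 0$ for $k=0,1,\dots,m$ certainly does — produces $v_{m-1},\dots,v_0$ with each $v_k$ solving the index-$k$ equation and $\tfrac{\mathrm{d}v_k}{\mathrm{d}x}=v_{k+1}$; then $y\coloneqq v_0$ solves \eqref{HDE} (as $\psi_0=\psi$, $\mu_0=\lambda$) and $\tfrac{\mathrm{d}^m y}{\mathrm{d}x^m}=v_m=w$.

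The forward direction is routine bookkeeping; the real content is the converse, and within it the one delicate point is verifying $\tfrac{\mathrm{d}v_{m-1}}{\mathrm{d}x}=w$, where the cancellations hinge exactly on the adjacency relations between consecutive $(\psi_k,\mu_k)$. Getting those relations straight — and observing that $\mu_{m-1}\neq 0$ is precisely what makes the algebraic inversion well defined — is the crux of the argument.
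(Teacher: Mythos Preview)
Your proposal is correct and follows essentially the same route as the paper: forward by induction via differentiating the level-$m$ equation to obtain the level-$(m{+}1)$ one (with the recursions $\psi_{m+1}=\psi_m+\phi'$, $\mu_{m+1}=\mu_m+\psi_m'$), and converse by defining $v_{m-1}=-\mu_{m-1}^{-1}(\phi w'+\psi_{m-1}w)$, checking $v_{m-1}'=w$ via the adjacency identities, and iterating downward. Your remark that only $\mu_0,\dots,\mu_{m-1}\neq 0$ is actually needed is a correct sharpening of the stated hypothesis.
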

\begin{proof}
Proceed by mathematical induction, the thesis is true for $m=0$. Now, assume that $v_{m-1}(x)$ satisfies
$$
\phi(x) \dfrac{\mathrm{d}^2v_{m-1}}{\mathrm{d}x^2}(x) + \psi_{m-1}(x)\dfrac{\mathrm{d}v_{m-1}}{\mathrm{d}x}(x)+\mu_{m-1} v_{m-1}(x)=0,
$$
where $\psi_{m-1}(x)$ and $\mu_{m-1}$ are given by Eq. \eqref{taum,mum}. Differentiating it and considering $$\dfrac{\mathrm{d}v_{m-1}}{\mathrm{d}x}(x)=v_{m}(x),$$ obtain
$$
\phi(x)\dfrac{\mathrm{d}^2v_{m}}{\mathrm{d}x^2}(x)\!+\left(\dfrac{\mathrm{d}\phi}{\mathrm{d}x}(x)+\psi_{m-1}(x)\!\right)\dfrac{\mathrm{d}v_{m}}{\mathrm{d}x}(x)+\left(\mu_{m-1}\!+\dfrac{\mathrm{d}\psi_{m-1}}{\mathrm{d}x}(x)\!\right)v_{m}(x)=0,
$$
and, so, $$\psi_{m}(x)=\dfrac{\mathrm{d}\phi}{\mathrm{d}x}(x)+\psi_{m-1}(x), \quad \mu_{m}=\mu_{m-1}+\dfrac{\mathrm{d}\psi_{m-1}}{\mathrm{d}x}(x),$$ which gives Eq. \eqref{taum,mum}. Hence, the first part.

Assume, now, that $w(x)$ is a solution of Eq. \eqref{HDEm} and $\mu_{m-1} \neq 0$. Then, by setting
\begin{equation*}
    v_{m-1}(x)=-\dfrac{1}{\mu_{m-1}}\left(\phi(x)\dfrac{\mathrm{d}w}{\mathrm{d}x}(x) + \psi_{m-1}(x)w(x)\right),
\end{equation*}
its derivative yields
\begin{equation*}
    \begin{split}
        \mu_{m-1}\dfrac{\mathrm{d}v_{m-1}}{\mathrm{d}x}(x)&=-\phi(x)\dfrac{\mathrm{d}^2w}{\mathrm{d}x^2}(x)-\left(\dfrac{\mathrm{d}\phi}{\mathrm{d}x}(x)-\psi_{m-1}(x)\!\right)\!\dfrac{\mathrm{d}w}{\mathrm{d}x}(x) - \dfrac{\mathrm{d}\psi_{m-1}}{\mathrm{d}x}(x)w(x) \\
        &=\mu_{m-1}w(x),
    \end{split}
\end{equation*}
thus, 
$$\dfrac{\mathrm{d}^2v_{m-1}}{\mathrm{d}x^2}(x)=\dfrac{\mathrm{d}w}{\mathrm{d}x}(x),$$ which gives
$$
\phi(x) \dfrac{\mathrm{d}^2v_{m-1}}{\mathrm{d}x^2}(x) + \psi_{m-1}(x)\dfrac{\mathrm{d}v_{m-1}}{\mathrm{d}x}(x)+\mu_{m-1} v_{m-1}(x) = 0,
$$
by its definition. Reiterating the argument for the rest of the $\mu_k \neq 0,\, k=0,1,\dots,m-2$, all the results follow.
\end{proof}
This gives us a way of constructing particular solutions of Eq. \eqref{HDE} for specific values of $\lambda \in \mathbb{C}$: whenever $\mu_n=0$, for $m=n$, Eq. \eqref{HDEm} has constant $w(x) \equiv c_n$ as a particular solution. By further assuming $\mu_k \neq 0,\, k=0,1,\dots,n-1$, $w(x)$ can be realised as $w(x)=v_{n}(x)$ for $y(x)$ some solution of original Eq. \eqref{HDE}. Thus, under such circumstances, Eq. \eqref{HDE} has a polynomial, $P_n(x)$, of degree $n$ as a particular solution. Such solutions are called polynomials of hypergeometric type. In addition, above conditions can be rewritten in a clearer way. Define
\begin{equation}
\lambda_n \coloneqq -n\left(\dfrac{\mathrm{d}\psi}{\mathrm{d}x}(x)+\frac{(n-1)}{2}\dfrac{\mathrm{d}^2\phi}{\mathrm{d}x^2}(x)\right), \quad n=0,1,\dots, 
\label{lambdan}
\end{equation}
then, $\mu_n=\lambda-\lambda_n$, $\mu_n=0$ is equivalent to $\lambda = \lambda_n$ and, under such hypothesis, non-vanishing $\mu_k$ is equivalent to $\lambda_n \neq \lambda_k$. Thus, we further require
\begin{equation}
0 \neq \mu_k = \mu_{nk} \coloneqq \lambda_n-\lambda_k = \left(k-n\right)\left(\dfrac{\mathrm{d}\psi}{\mathrm{d}x}(x)+\frac{k+n-1}{2}\dfrac{\mathrm{d}^2\phi}{\mathrm{d}x^2}(x)\right),
\label{munm}
\end{equation}
whenever $k=0,1,\dots,n-1$ for the polynomial solution to exist.

By expressing Eqs. \eqref{HDE} and \eqref{HDEm} in the self adjoint form, we find polynomials $P_n(x)$ explicitly by the so-called {\it Rodrigues formula}:
\begin{prop}
\label{rodriguesTheo}
Fix an arbitrary $n=0,1,\dots$ and assume $\lambda = \lambda_n$ as in Eq. \eqref{lambdan} and $\lambda_n \neq \lambda_k,\, k=0,1,\dots,n-1$. Then, there exists a polynomial solution, $P_n(x)$, of degree $n$ of Eq. \eqref{HDE}. Moreover, for $m = 0,1,\dots,n$,
\begin{equation*}
    \dfrac{\mathrm{d}^{m}P_n}{\mathrm{d}x^m}(x) = \frac{A_{mn}B_n}{\omega_m(x)}\dfrac{\mathrm{d}^{n-m}\omega_{n}}{\mathrm{d}x^{n-m}}(x),
\end{equation*}
where
\begin{equation*}
    A_{0n} \coloneqq 1, \quad A_{mn} \coloneqq (-1)^m \prod_{k=0}^{m-1}\mu_{nk}, \quad B_n \coloneqq \frac{1}{A_{nn}}c_n, \quad \omega_m(x) \coloneqq \phi^{m}(x)\omega(x),
\end{equation*}
$\mu_{nk}$ is given by Eq. \eqref{munm} and $\omega(x)$ is a solution of the so-called Pearson equation
\begin{equation}
    \dfrac{\mathrm{d}\omega_1}{\mathrm{d}x}(x)=\psi(x)\omega(x),
    \label{pearson}
\end{equation}
satisfying $\omega(x) \neq 0$, $x \in I$. In particular, the Rodrigues formula is given by
\begin{equation}
\label{rogriguesFormula}
    P_n(x)=\frac{B_n}{\omega(x)}\dfrac{\mathrm{d}^n\omega_n}{\mathrm{d}x^n}(x).
\end{equation}
\end{prop}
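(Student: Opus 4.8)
The plan is to put both \eqref{HDE} and \eqref{HDEm} into self-adjoint (Sturm--Liouville) form, as the remark preceding the statement suggests, and then iterate the resulting first-order relation. The first ingredient is that each weight $\omega_m(x)=\phi^m(x)\omega(x)$ satisfies its own Pearson equation: starting from $\tfrac{\mathrm{d}}{\mathrm{d}x}(\phi\omega)=\psi\omega$ (which is \eqref{pearson}, since $\omega_1=\phi\omega$) together with $\psi_m=\psi+m\,\tfrac{\mathrm{d}\phi}{\mathrm{d}x}$, a short computation yields $\tfrac{\mathrm{d}}{\mathrm{d}x}(\phi\omega_m)=\psi_m\omega_m$. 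Multiplying \eqref{HDEm} by $\omega_m$ and using $\phi\omega_m=\omega_{m+1}$, the first two terms combine into a total derivative, so \eqref{HDEm} becomes
\[
\frac{\mathrm{d}}{\mathrm{d}x}\!\left(\omega_{m+1}(x)\,\frac{\mathrm{d}v}{\mathrm{d}x}(x)\right)+\mu_m\,\omega_m(x)\,v(x)=0 .
\]

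For the existence claim I would simply recall the discussion preceding the statement: under $\lambda=\lambda_n$ one has $\mu_n=\lambda-\lambda_n=0$, so any nonzero constant $w\equiv c_n$ solves \eqref{HDEm} for $m=n$; since $\mu_k=\mu_{nk}\neq 0$ for $k=0,\dots,n-1$, the construction of the second part of Lemma \ref{lemma:hypergeometricity} (as recalled just before the statement) lifts $w$ to a solution $y$ of \eqref{HDE} with $\tfrac{\mathrm{d}^n y}{\mathrm{d}x^n}=c_n$, hence a polynomial of degree exactly $n$; call it $P_n$.

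Then I would set $v_m:=\tfrac{\mathrm{d}^m P_n}{\mathrm{d}x^m}$. By the first part of Lemma \ref{lemma:hypergeometricity}, $v_m$ solves \eqref{HDEm}, and when $\lambda=\lambda_n$ the relevant constant is $\mu_m=\lambda_n-\lambda_m=\mu_{nm}$ as in \eqref{munm}. Since $v_m'=v_{m+1}$, the self-adjoint form above gives $\tfrac{\mathrm{d}}{\mathrm{d}x}(\omega_{m+1}v_{m+1})=-\mu_{nm}\,\omega_m v_m$; for $0\le m\le n-1$ the factor $\mu_{nm}$ is nonzero, so $\omega_m v_m=-\tfrac{1}{\mu_{nm}}\,\tfrac{\mathrm{d}}{\mathrm{d}x}(\omega_{m+1}v_{m+1})$. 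Iterating this from index $m$ up to $n$ and using $v_n\equiv c_n$ (so $\omega_n v_n=c_n\omega_n$), the relations telescope to
\[
\omega_m(x)\,v_m(x)=\left(\prod_{k=m}^{n-1}\frac{-1}{\mu_{nk}}\right) c_n\,\frac{\mathrm{d}^{\,n-m}\omega_n}{\mathrm{d}x^{\,n-m}}(x).
\]

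To finish, I would check that the prefactor equals $A_{mn}B_n$: substituting $A_{mn}=(-1)^m\prod_{k=0}^{m-1}\mu_{nk}$, $A_{nn}=(-1)^n\prod_{k=0}^{n-1}\mu_{nk}$ and $B_n=c_n/A_{nn}$, the common product $\prod_{k=0}^{m-1}\mu_{nk}$ cancels and the signs combine to give $A_{mn}B_n=(-1)^{n-m}c_n\big/\prod_{k=m}^{n-1}\mu_{nk}$, which is exactly the prefactor above. Dividing by $\omega_m(x)$ — legitimate wherever $\omega_m(x)\neq 0$, hence off a finite subset of $I$ since $\omega\neq0$ on $I$ and $\phi$ is a (nonzero) polynomial — yields the stated expression for $\tfrac{\mathrm{d}^m P_n}{\mathrm{d}x^m}$, and the case $m=0$ (where $A_{0n}=1$ and $\omega_0=\omega$) is the Rodrigues formula \eqref{rogriguesFormula}. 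I expect the only real difficulty to be bookkeeping: keeping the index shift $\omega_m\mapsto\omega_{m+1}$ straight in the self-adjoint form, identifying Lemma \ref{lemma:hypergeometricity}'s constant $\mu_m$ with $\mu_{nm}$ once $\lambda=\lambda_n$, and tracking signs and products carefully enough that the telescoped constant lands precisely on $A_{mn}B_n$; none of the individual computations is hard.
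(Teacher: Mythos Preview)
Your proposal is correct and follows essentially the same approach as the paper: both derive the Pearson equation for $\omega_m$, write \eqref{HDEm} in self-adjoint form, obtain the recursion $\omega_m v_m=-\mu_{nm}^{-1}\tfrac{\mathrm{d}}{\mathrm{d}x}(\omega_{m+1}v_{m+1})$, and iterate from $v_n\equiv c_n$ down to $m$. Your write-up is in fact a bit more explicit than the paper's in verifying that the telescoped constant equals $A_{mn}B_n$ and in noting where division by $\omega_m$ is legitimate.
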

\begin{proof}
Existence is given by above discussion and Lemma \ref{lemma:hypergeometricity}. Now, set $$v_m(x)=\dfrac{\mathrm{d}^{m}y}{\mathrm{d}x^m}(x),$$ for $y(x)=P_n(x)$ satisfying $v_n(x)\equiv c_n$. Pearson Equation \eqref{pearson} yields $$\dfrac{\mathrm{d}(\phi\omega_m)}{\mathrm{d}x}(x)=\psi_m(x)\omega_m(x), \quad m = 0,1,\dots,n.$$ Hence, Eqs. \eqref{HDE} and \eqref{HDEm} can be written in a self-adjoint form
\begin{equation}
    \begin{split}
    \dfrac{\mathrm{d}}{\mathrm{d}x}\left(\phi(x)\omega(x)\dfrac{\mathrm{d}y}{\mathrm{d}x}(x)\right)+\lambda_n\omega(x)y(x)&=0, \\
    \dfrac{\mathrm{d}}{\mathrm{d}x}\left(\phi(x)\omega_m(x)\dfrac{\mathrm{d}v_m}{\mathrm{d}x}(x)\right)+\mu_{nm}\omega_m(x)v_m(x)&=0.
    \label{selfadjointm}
    \end{split}
\end{equation}
By considering $\phi(x)\omega_m(x)=\omega_{m+1}(x)$, Eq. \eqref{selfadjointm} reads
$$
\omega_m(x)v_m(x)=-\frac{1}{\mu_{nm}}\dfrac{\mathrm{d}(\omega_{m+1}v_{m+1})}{\mathrm{d}x}(x).
$$
Thus, reiterating
\begin{equation*}
    \begin{split}
        \omega_m(x)v_m(x)&=\frac{(-1)^1}{\mu_{nm}}\dfrac{\mathrm{d}(\omega_{m+1}v_{m+1})}{\mathrm{d}x}(x)=\frac{(-1)^2}{\mu_{nm} \mu_{n,m+1}}\dfrac{\mathrm{d}(\omega_{m+2}v_{m+2})}{\mathrm{d}x}(x) \\
        &=...=\frac{A_{mn}}{A_{nn}}\dfrac{\mathrm{d}^{n-m}(\omega_n v_n)}{\mathrm{d}^{n-m}x}(x)=A_{mn}B_n\dfrac{\mathrm{d}^{n-m}\omega_{n}}{\mathrm{d}x^{n-m}}(x).
    \end{split}
\end{equation*}
This proves all claims.
\end{proof}

Thus, considering Eq. \eqref{HDE} parametric on $\lambda$, we associate to it the sequence of polynomials $\left(P_n\right)^{\infty}_{n=0}$ given by the Rodrigues formula \eqref{rogriguesFormula} in terms of $\phi(x)$ and $\psi(x)$, through $\omega(x)$ given by Eq. \eqref{pearson}, by forcing $\lambda = \lambda_n$ and assuming $\lambda_k \neq \lambda_{k'}$ for $k \neq k'$ so that every $P_n(x)$ actually exists. Finally, by further imposing certain conditions on $\omega(x)$, sequence $\left(P_n\right)^{\infty}_{n=0}$ becomes a proper OPS with respect to the measure given by $\mathrm{d}\mu(x)=\omega(x)\mathrm{d}x$.
\begin{lemma}
Assume notation and hypotheses of Proposition \ref{rodriguesTheo} valid for each $n=0,1,\dots$ Let $\omega(x)$ satisfy
\begin{equation}
    \phi(x)\omega(x)x^k\Bigr|_{x=a} =    \phi(x)\omega(x)x^k\Bigr|_{x=b}=0,\quad k=0,1,\dots, \quad I=(a,b).
    \label{orthoPol}
\end{equation}
Then, 
\begin{equation*}
   \int_{I} P_k(x)P_{k'}(x)\omega(x)\mathrm{d}x=0, \quad k \neq k'.
\end{equation*}
\label{orthoTheo}
\end{lemma}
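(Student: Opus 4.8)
The plan is to put the relevant HDEs into self-adjoint (Sturm--Liouville) form and apply a Lagrange-type identity. First I would observe that, by the Pearson equation \eqref{pearson}, Eq. \eqref{HDE} with $\lambda=\lambda_k$ can be rewritten as
$$
\dfrac{\mathrm{d}}{\mathrm{d}x}\!\left(\phi(x)\omega(x)\dfrac{\mathrm{d}P_k}{\mathrm{d}x}(x)\right)+\lambda_k\,\omega(x)P_k(x)=0, \quad x\in I,
$$
and likewise for $P_{k'}$ with $\lambda_{k'}$ in place of $\lambda_k$; these are exactly the self-adjoint relations already derived in the proof of Proposition \ref{rodriguesTheo}. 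Here $\omega(x)$ is the (nowhere vanishing) solution of the Pearson equation fixed in that proposition, and $P_k(x)$, $P_{k'}(x)$ are the corresponding hypergeometric-type polynomials, which exist since the hypotheses of Proposition \ref{rodriguesTheo} are assumed for every index.

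Next I would multiply the equation for $P_k$ by $P_{k'}$, the one for $P_{k'}$ by $P_k$, subtract, and note that the two second-order terms collapse into a single total derivative,
$$
P_{k'}\dfrac{\mathrm{d}}{\mathrm{d}x}\!\left(\phi\omega P_k'\right)-P_k\dfrac{\mathrm{d}}{\mathrm{d}x}\!\left(\phi\omega P_{k'}'\right)=\dfrac{\mathrm{d}}{\mathrm{d}x}\!\left[\phi(x)\omega(x)\bigl(P_{k'}(x)P_k'(x)-P_k(x)P_{k'}'(x)\bigr)\right],
$$
so that integrating over $I=(a,b)$ yields
$$
(\lambda_{k'}-\lambda_k)\int_{a}^{b}P_k(x)P_{k'}(x)\omega(x)\,\mathrm{d}x=\Bigl[\phi(x)\omega(x)\bigl(P_{k'}(x)P_k'(x)-P_k(x)P_{k'}'(x)\bigr)\Bigr]_{x=a}^{x=b}.
$$
Then I would argue the right-hand side vanishes: the bracketed Wronskian-type factor $P_{k'}P_k'-P_kP_{k'}'$ is a polynomial, hence a finite linear combination $\sum_j c_j x^j$, and by hypothesis \eqref{orthoPol} every $\phi(x)\omega(x)x^j$ vanishes at $x=a$ and $x=b$; by linearity the whole boundary term is $0$. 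Finally, since the hypotheses of Proposition \ref{rodriguesTheo} hold for each $n$, taking $n=\max\{k,k'\}$ gives $\lambda_k\neq\lambda_{k'}$ whenever $k\neq k'$, and dividing through by the nonzero factor $\lambda_{k'}-\lambda_k$ delivers the asserted orthogonality.

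The one genuinely delicate point is the treatment of the boundary term at an unbounded endpoint (as in the Laguerre and Hermite instances): there the notation $\phi\omega x^j\bigr|_{x=b}$ must be read as $\lim_{x\to b}\phi(x)\omega(x)x^j=0$, and one should also check that $\int_I |P_kP_{k'}|\,\omega\,\mathrm{d}x$ converges so that the integration by parts is legitimate — automatic in the classical positive-definite cases, where $\phi\omega$ decays faster than any power, but worth a remark. Everything else is a routine computation, so I expect no substantial obstacle beyond bookkeeping the endpoint behaviour.
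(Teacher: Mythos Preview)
Your argument is correct and follows essentially the same route as the paper: write the HDEs in self-adjoint form via the Pearson equation, cross-multiply and subtract to obtain $(\lambda_{k'}-\lambda_k)\int_I P_kP_{k'}\omega\,\mathrm{d}x=\phi\omega\,W[P_{k'},P_k]\big|_{\partial I}$, observe that the Wronskian is a polynomial so that hypothesis~\eqref{orthoPol} kills the boundary term, and conclude by $\lambda_k\neq\lambda_{k'}$. Your additional caveat about reading the boundary values as limits at unbounded endpoints is a welcome clarification that the paper leaves implicit.
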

\begin{proof}
Say $P_k(x)$ and $P_{k'}(x)$ are associated to $\lambda_k \neq \lambda_{k'}$, respectively. Then, they solve
\begin{equation*}
    \begin{split}
    \dfrac{\mathrm{d}}{\mathrm{d}x}\!\left(\phi(x)\omega(x)\dfrac{\mathrm{d}P_{k}}{\mathrm{d}x}(x)\right)+\lambda_k\omega(x)P_k(x)&=0, \\
    \dfrac{\mathrm{d}}{\mathrm{d}x}\!\left(\phi(x)\omega(x)\dfrac{\mathrm{d}P_{k'}}{\mathrm{d}x}(x)\right)+\lambda_{k'}\omega(x)P_{k'}(x)&=0.
    \end{split}
\end{equation*}
Multiplying first equation by $P_{k'}(x)$ and second by $P_k(x)$, subtracting and integrating, it yields
\begin{equation*}
    \left(\lambda_{k'}-\lambda_k\right)\int_{I} P_k(x)P_{k'}(x)\omega(x)\mathrm{d}x=\phi(x)\omega(x)W\!\left[P_{k'},P_{k}\right]\!(x)\Bigr|_{\partial I},
\end{equation*}
where $$W[u,v](x )\coloneqq u(x)\dfrac{\mathrm{d}v}{\mathrm{d}x}(x)-v(x)\dfrac{\mathrm{d}u}{\mathrm{d}x}(x),$$ is the Wronskian. Since both $P_k(x)$ and $P_{k'}(x)$ are polynomials, $W\!\left[P_{k'},P_{k}\right]\!(x)$ is another polynomial. By Eq. \eqref{orthoPol}, right hand side is zero and $\lambda_k \neq \lambda_{k'}$ necessarily gives
\begin{equation*}
   \int_{I} P_k(x)P_{k'}(x)\omega(x)dx=0, \quad k \neq k',
\end{equation*}
as stated. This gives the result.
\end{proof}
In addition, we explicitly compute their squared norm
$$
d^2_n=\displaystyle \int_{I}P^2_n(x)\omega(x)\mathrm{d}x.
$$
\begin{lemma}
Assume notation and hypotheses of Lemma \ref{orthoTheo}. Let $d_{nk}^2$ denote
$$
d_{nk}^2=\int_I\omega_{k}(x)\left(\dfrac{\mathrm{d}^{k}P_n}{\mathrm{d}x^k}(x)\right)^2\mathrm{d}x.
$$
Then, $d^2_{k+1,n}=\mu_{kn}d^{2}_{kn}$ where $\mu_{kn}$ is given by Eq. \eqref{munm}. In particular,
$$c^2_n\int_I\omega_n(x)\mathrm{d}x = d_{nn}^2=d^2_n\prod^{n-1}_{k=0}\mu_{kn}.$$
Hence, $P_n(x) \in L^2(I;\omega)$ whenever $\omega_n(x) = \phi^n(x)\omega(x) \in L^{1}(I)$.
\label{intLemma}
\end{lemma}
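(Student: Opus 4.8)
The plan is to integrate by parts, once, in the self-adjoint equations satisfied by the successive derivatives of $P_n$, so as to obtain the one-step recursion $d^2_{n,m+1}=\mu_{nm}\,d^2_{n,m}$ (up to the obvious relabelling, this is the relation displayed in the statement), and then to iterate it from $m=0$ to $m=n-1$ and combine the outcome with the explicit value of $d^2_{n,n}$.

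For the recursion, I would recall from the proof of Proposition \ref{rodriguesTheo} that, with $\lambda=\lambda_n$, the functions $v_m=\mathrm{d}^mP_n/\mathrm{d}x^m$ satisfy $\frac{\mathrm{d}}{\mathrm{d}x}\!\bigl(\phi\omega_m v_m'\bigr)+\mu_{nm}\omega_m v_m=0$; since $\phi\omega_m=\omega_{m+1}$ this reads $\frac{\mathrm{d}}{\mathrm{d}x}(\omega_{m+1}v_m')=-\mu_{nm}\omega_m v_m$. Using $v_{m+1}=v_m'$ and integrating $d^2_{n,m+1}=\int_I\omega_{m+1}v_{m+1}^2\,\mathrm{d}x=\int_I(\omega_{m+1}v_m')\,v_m'\,\mathrm{d}x$ by parts,
$$
d^2_{n,m+1}=\Bigl[\omega_{m+1}v_mv_m'\Bigr]_{\partial I}+\mu_{nm}\int_I\omega_m v_m^2\,\mathrm{d}x=\Bigl[\omega_{m+1}v_mv_m'\Bigr]_{\partial I}+\mu_{nm}\,d^2_{n,m}.
$$
The boundary term vanishes: $v_m=P_n^{(m)}$ and $v_m'=P_n^{(m+1)}$ are polynomials, so $\phi^m v_mv_m'=\sum_j q_jx^j$ is a polynomial and therefore $\omega_{m+1}v_mv_m'=\phi^{m+1}\omega\,v_mv_m'=\sum_j q_j\,(\phi\omega x^j)$, each summand of which tends to $0$ at $a$ and at $b$ by Eq. \eqref{orthoPol}. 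This establishes $d^2_{n,m+1}=\mu_{nm}d^2_{n,m}$.

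Next I would iterate. Since $d^2_{n,0}=\int_I\omega P_n^2\,\mathrm{d}x=d^2_n$, the recursion gives $d^2_{n,n}=d^2_n\prod_{m=0}^{n-1}\mu_{nm}$; and since $P_n$ has degree $n$ and $v_n=\mathrm{d}^nP_n/\mathrm{d}x^n\equiv c_n$ by construction (Lemma \ref{lemma:hypergeometricity} and Proposition \ref{rodriguesTheo}), also $d^2_{n,n}=c_n^2\int_I\omega_n\,\mathrm{d}x$. Comparing, $c_n^2\int_I\omega_n\,\mathrm{d}x=d^2_{n,n}=d^2_n\prod_{m=0}^{n-1}\mu_{nm}$. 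The standing hypothesis $\lambda_n\neq\lambda_m$ for $m=0,\dots,n-1$ of Proposition \ref{rodriguesTheo} makes every $\mu_{nm}=\lambda_n-\lambda_m$ nonzero, so $d^2_n=c_n^2\bigl(\prod_{m=0}^{n-1}\mu_{nm}\bigr)^{-1}\int_I\omega_n\,\mathrm{d}x$; hence if $\omega_n=\phi^n\omega\in L^1(I)$ then $\|P_n\|_{L^2(I;\omega)}^2=d^2_n<\infty$, i.e. $P_n\in L^2(I;\omega)$.

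The one genuinely delicate step is the vanishing of the boundary term: Eq. \eqref{orthoPol} is available only for the plain product $\phi\omega$ against monomials, so one must peel off exactly one factor of $\phi$ and expand the remaining polynomial factor term by term — using $\omega_{m+1}$ wholesale need not work at an infinite endpoint. Two minor points deserve a sentence each. First, to avoid manipulating a priori possibly infinite integrals, I would run the recursion \emph{downward} from $m=n$, where $d^2_{n,n}=c_n^2\int_I\omega_n\,\mathrm{d}x$ is manifestly finite under the hypothesis, so that $d^2_n=d^2_{n,n}\big/\prod_{m=0}^{n-1}\mu_{nm}$ is finite as well. Second, the statement is implicitly set in the positive-definite case, where $\omega>0$ and $\phi$ keeps a constant sign on $I$; this is what makes the $d^2_{n,m}$ honest squared norms and "$\omega_n\in L^1(I)$" equivalent to finiteness of $\int_I\omega_n\,\mathrm{d}x$.
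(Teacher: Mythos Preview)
Your proof is correct and follows essentially the same route as the paper: write the self-adjoint equation for $v_m=P_n^{(m)}$, integrate by parts against $v_m$, kill the boundary term via Eq.~\eqref{orthoPol}, and iterate. Your version is in fact more careful than the paper's, which simply asserts that ``the integrated term vanishes by Eq.~\eqref{orthoPol}''; your explicit peeling off of one factor of $\phi$ and expansion of the remaining polynomial, together with the remark about running the recursion downward from $m=n$ to guarantee finiteness a priori, are genuine improvements in rigor.
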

\begin{proof}
Consider Eq. \eqref{selfadjointm} for $v_m(x)=P_m(x)$ and $m=k+1$, multiply it by $\dfrac{\mathrm{d}^{k}P_n}{\mathrm{d}x^k}(x)$ and integrate by parts over $I$. It yields
$$
\int^a_b\!\!\omega_{k+1}(x)\left(\dfrac{\mathrm{d}^{k+1}P_n}{\mathrm{d}x^{k+1}}(x)\right)^2\mathrm{d}x-
\mu_{kn}\int^a_b\!\!\omega_{k}(x)\left(\dfrac{\mathrm{d}^{k}P_n}{\mathrm{d}x^k}(x)\right)^2\mathrm{d}x=0,
$$
where the integrated term vanishes by Eq. \eqref{orthoPol}. This gives $d^2_{k+1,n}=\mu_{kn}d^2_{kn}$. The thesis is reached by reiterating the argument.
\end{proof}

\begin{table}[ht!]
\begin{center}
\begin{tabular}{c c c c}
 \toprule
 $P_n(x)$ & $P_n^{(\alpha,\beta)}(x)$ $\left(\alpha>-1, \; \beta>-1 \right)$ & $L_n^{\alpha}(x) \left( \alpha>-1 \right)$ & $H_n(x)$ \\ [0.5ex] 
 \midrule
 $I$ & $(-1,1)$ & $(0,\infty)$ & $(-\infty,\infty)$ \\ [0.2cm]
 \midrule
 $\omega(x)$ & $\left(1-x\right)^{\alpha}\left(1+x\right)^{\beta}$ & $x^{\alpha}e^{-x}$ & $e^{-x^2}$ \\ [0.2cm]
 \midrule
 $\phi(x)$ & $1-x^2$ & $x$ & $1$ \\ [0.2cm]
 \midrule
 $\psi(x)$ & $\beta-\alpha-\left(\alpha+\beta+2\right)x$ & $1+\alpha-x$ & $-2x$ \\ [0.2cm]
 \midrule
 $\lambda_n$ & $n\left(n+\alpha+\beta+1\right)$ & $n$ & $2n$ \\ [0.2cm]
 \midrule
 $\omega_n(x)$ & $\left(1-x\right)^{n+\alpha}\left(1+x\right)^{n+\beta}$ & $x^{n+\alpha}e^{-x}$ & $e^{-x^2}$ \\ [0.2cm]
 \midrule
 $c_n$ & $\frac{\Gamma(2n+\alpha+\beta+1)}{2^n\Gamma(n+\alpha+\beta+1)}$ & $1$ & $(-1)^n n!$ \\ [0.2cm]
 \midrule
 $d^2_n$ & $\frac{2^{\alpha+\beta+1} \Gamma(n+\alpha+1) \Gamma(n+\beta+1)}{n!(2n+\alpha+\beta+1)\Gamma(n+\alpha+\beta+1)}$ & $\frac{\Gamma(n+\alpha+1)}{n!}$ & $2^n n! \sqrt{\pi}$ \\ [0.2cm]
 \bottomrule
\end{tabular}
\caption{Data for the Classical OPs in the positive-definite case.}
\label{table1}
\end{center}
\end{table}

We will know Eq. \eqref{orthoPol} as condition on $\omega(x)$ for Classical OPs and, consequently, associated polynomials will be considered as Classical OPs. Auxiliary conditions $\omega(x)>0$ and $\phi(x)>0$ in $I$ are also imposed and, so, $d^2_n>0$. Under such definition, we may classify Classical OPs into the three canonical Jacobi, Laguerre and Hermite families which correspond to $P^{(\alpha,\beta)}_n(x)$, $L^{\alpha}_n(x)$ and $H_n(x)$, respectively, in Table \ref{table1} above.

Any Eq. \eqref{HDE} whose $\phi(x)$ has real but not double roots and whose $\psi(x)$ satisfies that $$\dfrac{\mathrm{d}\psi}{\mathrm{d}x}(x) \neq 0,$$ can be reduced, by means of linear changes of variable, into one of the three canonical HDE defined by $\phi(x)$ and $\psi(x)$ given in Table \ref{table1}. Further assuming conditions on the parameters and intervals indicated (first and second row), condition on each $\omega(x)$ for Classical OPs and condition $\phi^n(x)\omega(x) \in L^{1}(I)$ are satisfied. Namely, $a=-1,\,b=1$ and $\alpha,\beta>-1$ for Jacobi polynomials, $a=0,\,b=\infty$ and $\alpha>-1$ for Laguerre polynomials and $a=-\infty,\,b=\infty$ for Hermite polynomials. In addition, $\lambda_k \neq \lambda_{k'}$ for $k \neq k'$ for the three families. Hence, by Proposition \ref{rodriguesTheo}, they are well defined for each degree and given explicitly by Eq. \eqref{rogriguesFormula}, by Lemma \ref{orthoTheo}, are orthogonal and, by Lemma \ref{intLemma}, squared integrable with respect to $\mathrm{d}\mu$. Finally, $d^2_n$ (last row of Table 1) is included.

We have left behind case $\phi(x)=\left(x-c\right)^2$ on purpose. In this scenario, the associated polynomials are the Bessel polynomials. However, its $\omega(x)$ does not satisfy Eq. \eqref{orthoPol} and orthogonality cannot be understood by means of a positive-definite measure. Actually, in a classical context, it requires integrating over the unit circle in the complex plane. Despite not being completely exhaustive, in a mathematical sense, we will see that, for the purpose of solving Eigenvalue Problem \ref{schroProb}, the analysis carried out is sufficient: vast majority of physical system reduce to Jacobi, Laguerre or Hermite HDE and subsequent results that will be employed for addressing the bound states, namely, Theorem \ref{eigenTheo}, rely on positive definiteness of the measure. 

That said, it is worth noting that though sufficient it is, perhaps, outdated. For a modern approach to the study and characterization of classical families based on the algebraic theory of orthogonal polynomials developed by Maroni, reader is referred to \cite{castillo2021}. In that context, orthogonality is understood with respect to a functional belonging to an adequate dual space and, so, the four families of polynomials namely, Jacobi, Laguerre, Hermite and Bessel, are treated on an equal footing and considered as classical\footnote{As Maroni said
 in an interview to K. Castillo and Z. da Rocha when asked about Bessel polynomials: ``comme dans le roman
 d’Alexandre Dumas, les trois mousquetaires étaient quatre en réalité''.}. However, in our context, classical will be understood in the traditional sense, which is closely related to of positive definiteness. For simplicity, we shall refer to Classical OPs as what is known as the ``old'' Classical OPs in current literature, see \cite{castillo2024first}.

\subsubsection{Addressing the bound states\label{boundstates}}
Firstly, recall shape of Problem \ref{eigenProb}. This problem is solved by means of the following:
\begin{theorem}
Assume notation employed in Problem \ref{eigenProb}. Assume further hypotheses of Lemma \ref{intLemma} and an associated HDE linearly reducible to the canonical forms in Table \ref{table1}. Then, non-trivial solutions, $y_{\lambda}(s)$, satisfying that $y_{\lambda}(s) \in L^{2}(I;\omega)$ exist only when
\begin{equation}
\label{eigenEq}
    \lambda = \lambda_n = -n\left(\dfrac{\mathrm{d}\psi}{\mathrm{d}s}(s)+\frac{(n-1)}{2}\dfrac{\mathrm{d}^2\phi}{\mathrm{d}s^2}(s)\right), \quad n=0,1,\dots,
\end{equation}
and, denoting $y_{\lambda}(s)=y_{\lambda_n}(s) \coloneqq P_{n}(s)$, are given by Rodrigues formula, i.e, Eq. \eqref{rogriguesFormula},
$$
P_n(s)=\frac{B_n}{\omega(s)}\dfrac{\mathrm{d}^n(\phi^n\omega)}{\mathrm{d}s^n}(s).
$$
Hence, they are the Classical OPs associated to $\omega(s)$.
\label{eigenTheo}
\end{theorem}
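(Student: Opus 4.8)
The plan is to prove the two implications separately, the converse being the substantive one. For the direction ``$\lambda = \lambda_n \Rightarrow$ eigenvalue'': suppose $\lambda = \lambda_n$ for some $n\in\{0,1,2,\dots\}$. Since $\lambda_k\neq\lambda_{k'}$ for $k\neq k'$ in each of the Jacobi, Laguerre and Hermite cases of Table \ref{table1}, the hypotheses of Proposition \ref{rodriguesTheo} hold, so there is a polynomial solution $P_n(s)$ of degree $n$ given by the Rodrigues formula \eqref{rogriguesFormula}; and since $\omega_n = \phi^n\omega\in L^1(I)$ in all three canonical cases (one of the hypotheses of Lemma \ref{intLemma}), Lemma \ref{intLemma} gives $P_n\in L^2(I;\omega)$. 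Thus each $\lambda_n$ is an eigenvalue with eigenfunction $P_n$, and the content of the theorem is the converse.

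For the converse, let $y_\lambda\in L^2(I;\omega)$ be a nontrivial solution of \eqref{HDE} for some $\lambda\in\mathbb{C}$, and write $\langle f,g\rangle_\omega \coloneqq \int_I f\,g\,\omega\,\mathrm{d}s$. Putting the equation for $y_\lambda$ and the equation for $P_n$ in the self-adjoint form used in \eqref{selfadjointm}, multiplying the first by $P_n$ and the second by $y_\lambda$, subtracting, and integrating over $I=(a,b)$, one obtains exactly the computation in the proof of Lemma \ref{orthoTheo}, except that one polynomial factor is replaced by the a priori arbitrary $y_\lambda$, namely the Green-type identity
\begin{equation*}
(\lambda-\lambda_n)\,\langle y_\lambda, P_n\rangle_\omega = \Bigl[\phi(s)\omega(s)\bigl(y_\lambda(s)P_n'(s)-y_\lambda'(s)P_n(s)\bigr)\Bigr]_{s=a}^{s=b}.
\end{equation*}
The plan is to show the boundary term on the right vanishes (the crux; see the last paragraph). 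Granting that, $(\lambda-\lambda_n)\langle y_\lambda, P_n\rangle_\omega = 0$ for every $n$.

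If $\lambda\neq\lambda_n$ for all $n$, this forces $\langle y_\lambda, P_n\rangle_\omega = 0$ for all $n$; since the classical orthogonal polynomials $(P_n)_{n\ge0}$ form a complete system in $L^2(I;\omega)$ for each of the Jacobi, Laguerre and Hermite weights — a classical fact resting on density of polynomials in these spaces, which uses $\phi^n\omega\in L^1(I)$ for all $n$ together with determinacy of the weight — we would get $y_\lambda=0$, a contradiction. Hence $\lambda=\lambda_{n_0}$ for some $n_0$, unique because $\lambda_k\neq\lambda_{k'}$ for $k\neq k'$; this is \eqref{eigenEq}. Then $\langle y_\lambda,P_n\rangle_\omega=0$ for all $n\neq n_0$, so $y_\lambda - d_{n_0}^{-2}\langle y_\lambda,P_{n_0}\rangle_\omega\,P_{n_0}$ is orthogonal to every $P_n$ and hence vanishes; that is, $y_\lambda$ is a nonzero scalar multiple of $P_{n_0}$, given by \eqref{rogriguesFormula}, which identifies the eigenfunctions as the classical OPs associated to $\omega$. (One can bypass completeness in this last step: once $\lambda=\lambda_{n_0}$ is forced, it suffices to know that the solution of the canonical HDE linearly independent from $P_{n_0}$ carries a logarithmic singularity or super-polynomial growth at an endpoint and so fails to be in $L^2(I;\omega)$, whence the eigenspace is one-dimensional.)

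The main obstacle is the vanishing of $\bigl[\phi\omega\,(y_\lambda P_n'-y_\lambda'P_n)\bigr]_{\partial I}$. Because only one factor is a polynomial, Eq. \eqref{orthoPol} alone does not suffice: one must also control $y_\lambda$ and $y_\lambda'$ at the singular endpoints of $I$, which I would do endpoint by endpoint after the linear reduction to the three canonical forms. At an endpoint at infinity (Laguerre at $+\infty$, Hermite at $\pm\infty$) the $L^2(I;\omega)$ hypothesis is decisive: it excludes the solution with $e^{s}$ or $e^{s^2}$ growth, and for the surviving solution the exponentially small $\phi\omega$ dominates its at most polynomial growth, so the contribution is $0$. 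At a finite regular-singular endpoint $s_0$ ($s_0=\pm1$ for Jacobi, $s_0=0$ for Laguerre) a Frobenius analysis shows that near $s_0$ every solution is a combination of powers $(s-s_0)^\rho$ (possibly with a logarithm) with exponents bounded below, while $\phi\omega$ vanishes there to a strictly positive order; matching the orders of vanishing against the fact that $y_\lambda'$ lowers an exponent by one is the step I expect to require the most care, since it is precisely here — for the smallest admissible values of the Jacobi/Laguerre parameters — that one is closest to the boundary term failing to vanish, and where the integrability built into the hypotheses of Lemma \ref{intLemma} has to be used to the full.
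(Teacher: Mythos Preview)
Your proposal is correct and follows the same architecture as the paper's proof: the forward direction via Lemma \ref{intLemma}, and the converse via the Green/Lagrange identity combined with completeness of the classical polynomials in $L^2(I;\omega)$, with the vanishing of the boundary term $\bigl[\phi\omega\,W[P_n,y_\lambda]\bigr]_{\partial I}$ identified as the crux.

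There are two small technical differences worth noting. First, for the case $\lambda=\lambda_{n_0}$ the paper does not invoke completeness a second time: instead it observes that the Green identity on an arbitrary $(x_1,x_2)\subset I$ forces $\phi\omega\,W[P_{n_0},y_\lambda]$ to be constant, and that constant is $c_1=c_2=0$, whence $W[P_{n_0},y_\lambda]\equiv 0$ and $y_\lambda\propto P_{n_0}$. Second, for the boundary term itself the paper does not go through Frobenius exponents; it writes
\[
y_\lambda(s)=P_n(s)\left(\frac{y_\lambda(s_0)}{P_n(s_0)}+\int_{s_0}^{s}\frac{W[P_n,y_\lambda](u)}{P_n^2(u)}\,\mathrm{d}u\right),
\]
feeds in the assumed endpoint limits $\phi\omega\,W\to c_i$, integrates explicitly in each canonical case of Table \ref{table1}, and shows that $c_i\neq 0$ would force $y_\lambda\notin L^2(I;\omega)$ (referring to \cite[\S9.2]{Nikiforov1988SpecialFO} for the computations). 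Your Frobenius/growth analysis would reach the same conclusion and is a perfectly valid alternative; the paper's route has the advantage that it packages both the finite and infinite endpoints into a single argument driven by the already-established finiteness of the limits $c_i$, whereas yours is perhaps more transparent about exactly which parameter ranges are delicate.
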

\begin{proof}
By above discussion and Lemma \ref{intLemma}, pairs $(\lambda, P_n(s))$ are solutions of Problem \ref{eigenProb}.

Assume, now, that there exists some other $(\lambda,y_{\lambda}(s))$ solving Problem \ref{eigenProb}. Then, for an arbitrary $n = 0,1,\dots$,
\begin{equation*}
    \begin{split}
    \dfrac{\mathrm{d}}{\mathrm{d}s}\!\left(\phi(s)\omega(s)\dfrac{\mathrm{d}y_{\lambda}}{\mathrm{d}s}(s)\right)+\lambda_k\omega(s)y_{\lambda}(s)&=0, \\
    \dfrac{\mathrm{d}}{\mathrm{d}s}\!\left(\phi(s)\omega(s)\dfrac{\mathrm{d}P_n}{\mathrm{d}s}(s)\right)+\lambda_n\omega(s)P_n(s)&=0,
    \end{split}
\end{equation*}
both hold. Multiply first equality by $P_n(s)$ and second by $y_{\lambda}(s)$, subtract and integrate over $(x_1,x_2) \subsetneq I$ so that there are no singular points. Obtain,
\begin{equation}
    \label{theo31step1}
    (\lambda-\lambda_n)\displaystyle\int^{x_2}_{x_1}y_{\lambda}(u)P_n(u)\omega(u)\mathrm{d}u+\phi(s)\omega(s)W\!\left[P_{n},y_{\lambda}\right]\!(s)\Bigr|^{x_2}_{x_1}=0.
\end{equation}
By assumption, both $y_{\lambda}(s)$ and $P_n(s)$ are of $L^2(I;\omega)$ and, so, we may take the limit in Eq. \eqref{theo31step1} as $x_1 \to a$ and $x_2 \to b$. Hence, write
\begin{equation}
    \begin{split}
    \lim_{s \to a}\phi(s)\omega(s)W\!\left[P_{n},y_{\lambda}\right]\!(s)=c_1, \\
    \lim_{s \to b}\phi(s)\omega(x)W\!\left[P_{n},y_{\lambda}\right]\!(s)=c_2,
    \end{split}
\label{asympTheo31}
\end{equation}
for some finite constants $c_1$ and $c_2$. Now, assume both constants are vanishing. Then, whenever $\lambda \neq \lambda_n$, Eq. \eqref{theo31step1} yields
$$\displaystyle\int^{b}_{a}y_{\lambda}(s)P_n(s)\omega(s)\mathrm{d}s = 0, \quad n=0,1,\dots,$$ which implies $y_{\lambda}(s) \equiv 0$ as $\left(P_n(s)\right)_{n}$ is a complete set in $L^2(I;\omega)$; a contradiction. Whenever $\lambda = \lambda_n$, Eq. \eqref{theo31step1} gives $$\phi(s)\omega(s)W\!\left[P_{n},y_{\lambda}\right]\!(s) \equiv c,$$ as $x_1$ and $x_2$ were arbitrary. Hence, $c=c_1=c_2$ and, if they are vanishing, it necessarily follows $$W\!\left[P_{n},y_{\lambda}\right]\!(s)=0,$$
and, so, they are linearly dependent; a contradiction, again. Therefore, it suffices to prove that $c_1$ and $c_2$ are vanishing.

Indeed, by the relation $$\dfrac{\mathrm{d}}{\mathrm{d}s}\left(\dfrac{y_{\lambda}}{P_n}\right)(s)=\dfrac{W\!\left[P_{n},y_{\lambda}\right]\!(s)}{P^2_n(s)},$$ write $$y_{\lambda}(s)=P_n(s)\left(\dfrac{y_{\lambda}(s_0)}{P_n(s_0)}+\int^{s}_{s_0}\dfrac{W\!\left[P_{n},y_{\lambda}\right]\!(u)}{P^2_n(u)}\mathrm{d}u\right),$$ for some $s_0 \in I$ lying to the right of the zeros of $P_n(s)$. Introducing asymptotics given by Eq. \eqref{asympTheo31} in above $y_{\lambda}(s)$ and integrating for the particular data given by, for instance, Table \ref{table1}, it necessarily follows that $c_2$ must vanish so that $y_{\lambda} \in L^2(I;\omega)$. Similarly, one can obtain that $c_1$ must vanish to retain square-integrability. For the specific computations, reader is refered to \cite[\S9.2, pp. 69-71]{Nikiforov1988SpecialFO}. Hence, the result. \end{proof}
Thence, Classical OPs are distinguished among the rest of solutions not only by their simplicity but also because they conform the only square-integrable solutions, under appropriate conditions. 

Now, let us revisit the method itself. Depart from Eq. \eqref{schroadi} and attempt to transform it into a GHE as in Eq. \eqref{GHE}. Consider a {\it sufficiently nice} transformation $x=\xi(s)$ for a map $\xi: I \to \mathbb{R}$ and denote 
\begin{equation*}
\rho(s) \coloneqq \dfrac{\mathrm{d}\xi}{\mathrm{d}s}(s).     
\end{equation*}
Hence, from Eq. \eqref{schroadi}, obtain
\begin{equation}
   \dfrac{\mathrm{d}^2\Psi}{\mathrm{d}s^2}(s)-\dfrac{1}{\rho(s)}\dfrac{\mathrm{d}\rho}{\mathrm{d}s}(s)\dfrac{\mathrm{d}\Psi}{\mathrm{d}s}(s)+\rho^2(s)\left(\varepsilon-v(s)\right)\Psi(s)=0 ,\quad s \in I.
   \label{changedschro}
\end{equation}
Then, Eq. \eqref{changedschro} corresponds to an Eq. \eqref{GHE} whenever
\begin{equation}
    \dfrac{1}{\rho(s)}\dfrac{\mathrm{d}\rho}{\mathrm{d}s}(s)=-\dfrac{\widetilde{\psi}(s)}{\phi(s)}, \quad \rho^2(s)\left(\varepsilon-v(s)\right)=\dfrac{\widetilde{\phi}(s;\varepsilon)}{\phi^2(s)},
    \label{conds}
\end{equation}
for some polynomials $\phi(s)$, $\widetilde{\phi}(s;\varepsilon) \in \mathcal{P}_2$ and $\widetilde{\psi}(s)\in \mathcal{P}_1$. Generally, $\xi(s)$ cannot be found in this way but it does hint us towards the adequate choice. For instance, potential $v(x)$ and $$\dfrac{1}{\rho(s)}\dfrac{\mathrm{d}\rho}{\mathrm{d}s}(s)$$ should be rational in $\tau(x) \coloneqq \xi^{-1}(x)$ and $s$, respectively. Now, assume Eq. \eqref{conds} is satisfied and write
\begin{equation}
\dfrac{\mathrm{d}^2\Psi}{\mathrm{d}s^2}(s)+\frac{\widetilde{\psi}(s)}{\phi(s)}\dfrac{\mathrm{d}\Psi}{\mathrm{d}s}(s)+
\frac{\widetilde{\phi}(s;\varepsilon)}{\phi^2(s)}\Psi(s)=0, \quad s \in  I.
\label{GHEschro}
\end{equation}
Set $\Psi(s)=\chi(s;\varepsilon)y(s)$, following NU approach for reducing GHE into an Eq. \eqref{HDE}, and, if successful, obtain 
\begin{equation}
    \phi(s) \dfrac{\mathrm{d}^2y}{\mathrm{d}s^2}(s) + \psi(s;\varepsilon) \dfrac{\mathrm{d}y}{\mathrm{d}s}(s)+\lambda(\varepsilon) y(s)=0, \quad s \in I.
\label{HDEschro}
\end{equation}
Apply, if possible, Theorem \ref{eigenTheo} to solve the associated eigenvalue problem in terms of $\varepsilon$ (parametric dependence of $\psi$ with respect to $\varepsilon$ can be sorted out) and find all pairs $(P_n(s),\varepsilon_n)_{n}$ solving Eq. \eqref{HDEschro} such that $P_n(s)\sqrt{\omega(s;\varepsilon)} \in L^2(I)$. Now, define $\widetilde{\omega}(s)$ as any function satisfying
\begin{equation}
    \dfrac{\mathrm{d}(\phi\widetilde{\omega})}{\mathrm{d}s}(s)=\widetilde{\psi}(s)\widetilde{\omega}(s),
    \label{rhotilde}
\end{equation}
associated to the self adjoint form of Eq. \eqref{GHE}. Then, by Eqs. \eqref{phi} and \eqref{pearson}, it holds $\omega(s)=\widetilde{\omega}(s)\chi^2(s)$. Finally, denoting $\Psi_n(x) \coloneqq \chi(\tau(x);\varepsilon_n)P_n(\tau(x))$, write
\begin{equation}
\begin{split}
+\infty > \int_I P^2_n(s)\omega(s;\varepsilon)\mathrm{d}s &= \int_I P^2_n(s)\chi^2(s;\varepsilon_n)\widetilde{\omega}(s;\varepsilon_n)\mathrm{d}s\\
&=\int_{\mathbb{R}}\Psi^2_n(x)\widetilde{\omega}(x;\varepsilon_n)\left|\tau'(x)\right|\mathrm{d}x,
\end{split}
\label{nonintegrability}
\end{equation}
which is not equivalent, in general, to $\Psi_n(x) \in L^2(\mathbb{R})$, the condition we aim for from the beginning. However, in certain occasions, we can use the following result:
\begin{coro}
\label{coroInt}
Assume notation and discussion of this epigraph. Assume further that NU reduction is successful and Theorem \ref{eigenTheo} is applicable. Then, if 
\begin{equation}
    \left|\widetilde{\omega}(x;\varepsilon_n)\tau'(x)\right| < C, \quad C>0, \quad x \in \mathbb{R},
\label{estimate}
\end{equation}
being $\tau(x) \coloneqq \xi^{-1}(x)$, the only possible bound states associated to Eigenvalue Problem \ref{schroProb} are the pairs $(\Psi_n(x),\varepsilon_n)$ where $$\Psi_n(x)=\chi(\tau(x);\varepsilon_n)P_n(\tau(x)),$$
function $\chi(s;\varepsilon)$ is given by the NU reduction associated to Eq. \eqref{GHEschro}, $P_n(s)$ are the Classical OPs associated to $\omega(s;\varepsilon)$ from Pearson equation of Eq. \eqref{HDEschro} and, finally, the bound eigenenergies, $\varepsilon_n \in (v_{\text{min}},v_-)$, are given implicitly by
\begin{equation*}
\label{eigenvareps}
    \lambda(\varepsilon_n)=\lambda_n(\varepsilon_n)=-n\left(\dfrac{\partial \psi}{\partial s}(s;\varepsilon_n)+\frac{(n-1)}{2}\dfrac{\mathrm{d}^2\phi}{\mathrm{d}s^2}(s)\right), \quad n=0,1,\dots
\end{equation*}
\end{coro}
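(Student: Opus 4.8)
The plan is to run the Nikiforov--Uvarov transformation backwards on an arbitrary bound state, show that the estimate \eqref{estimate} is exactly what is needed to transport square-integrability down to the associated HDE, and then invoke Theorem \ref{eigenTheo} to force the eigenenergy to be one of the $\varepsilon_n$. So, first I would fix a candidate bound state $(\varepsilon,\Psi_\varepsilon)$ with $\varepsilon\in(v_{\text{min}},v_-)$ and $\Psi_\varepsilon\in L^2(\mathbb{R})$ nontrivial. The sufficiently nice change of variables $x=\xi(s)$ turns Eq. \eqref{schroadi} into the GHE \eqref{GHEschro}, and the assumed-successful NU reduction $\Psi_\varepsilon(\xi(s))=\chi(s;\varepsilon)y_\varepsilon(s)$ then produces a solution $y_\varepsilon$ of the HDE \eqref{HDEschro} at the fixed parameter $\varepsilon$. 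Since $\chi(\cdot;\varepsilon)$ is nonvanishing on $I$ (it solves \eqref{phi}) and $\xi$ is invertible with inverse $\tau$, the function $y_\varepsilon$ is again nontrivial.

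Second, I would transport integrability. Using $\omega(s;\varepsilon)=\widetilde\omega(s;\varepsilon)\chi^2(s;\varepsilon)$ --- which follows from \eqref{phi}, \eqref{pearson} and \eqref{rhotilde} --- and then the substitution $s=\tau(x)$, exactly as in \eqref{nonintegrability}, one gets
\begin{equation*}
\int_I y_\varepsilon^2(s)\,\omega(s;\varepsilon)\,\mathrm{d}s=\int_I \Psi_\varepsilon^2(\xi(s))\,\widetilde\omega(s;\varepsilon)\,\mathrm{d}s=\int_{\mathbb{R}}\Psi_\varepsilon^2(x)\,\bigl|\widetilde\omega(x;\varepsilon)\tau'(x)\bigr|\,\mathrm{d}x\le C\int_{\mathbb{R}}\Psi_\varepsilon^2(x)\,\mathrm{d}x<+\infty,
\end{equation*}
the last inequality being \eqref{estimate}; the bound in \eqref{estimate} is available for the $\varepsilon$ at hand because the change of variables --- and with it $\widetilde\psi$, $\widetilde\omega$ and $\tau$ --- does not depend on $\varepsilon$, only $\widetilde\phi$, $\chi$, $\psi$ and $\lambda$ do. Hence $y_\varepsilon\in L^2(I;\omega(\cdot;\varepsilon))$.

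Third, I would invoke Theorem \ref{eigenTheo} for the HDE \eqref{HDEschro} at the fixed parameter $\varepsilon$ --- its hypotheses hold since ``Theorem \ref{eigenTheo} is applicable'' by assumption. A nontrivial solution in $L^2(I;\omega(\cdot;\varepsilon))$ exists only if $\lambda(\varepsilon)=\lambda_n(\varepsilon)$ for some $n=0,1,\dots$, i.e. $\varepsilon$ must be one of the $\varepsilon_n$ (and it lies in $(v_{\text{min}},v_-)$ by hypothesis), and then $y_\varepsilon=c\,P_n$ for a constant $c\neq0$, with $P_n$ the Classical OP associated to $\omega(\cdot;\varepsilon_n)$ from the Pearson equation of \eqref{HDEschro} and listed in Table \ref{table1}. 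Undoing the substitution, $\Psi_\varepsilon(x)=\chi(\tau(x);\varepsilon_n)y_\varepsilon(\tau(x))=c\,\chi(\tau(x);\varepsilon_n)P_n(\tau(x))=c\,\Psi_n(x)$, so any bound state coincides up to normalisation with one of the $(\Psi_n,\varepsilon_n)$, which is the claim.

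I expect the main difficulty to be conceptual rather than computational. Because $\tau$ is not linear, $y_\varepsilon\in L^2(I;\omega)$ need not be equivalent to $\Psi_\varepsilon\in L^2(\mathbb{R})$, so one cannot characterise the bound states by a clean equivalence; the estimate \eqref{estimate} is precisely the extra ingredient that rescues the single implication actually used, $\Psi_\varepsilon\in L^2(\mathbb{R})\Rightarrow y_\varepsilon\in L^2(I;\omega)$. This yields only a necessary condition on bound states --- hence ``the only possible'' in the statement --- and not sufficiency: whether a given $\varepsilon_n$ actually produces a genuine bound state (that is, also gives $\Psi_n\in L^2(\mathbb{R})$) must be checked case by case. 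A lesser point to mind is the parametric-in-$\varepsilon$ use of Theorem \ref{eigenTheo}: one needs the reduced HDE to keep reducing linearly to a canonical family of Table \ref{table1} with admissible parameters for every relevant $\varepsilon$, which is exactly what the applicability hypothesis secures.
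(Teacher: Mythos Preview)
Your proof is correct and follows essentially the same route as the paper's own argument: take an arbitrary bound state, pass to the HDE via the NU factorisation $\Psi_\varepsilon=\chi(\cdot;\varepsilon)y_\varepsilon$, use the chain of equalities \eqref{nonintegrability} together with the estimate \eqref{estimate} to obtain $y_\varepsilon\in L^2(I;\omega(\cdot;\varepsilon))$, and then invoke Theorem \ref{eigenTheo} to force $y_\varepsilon=P_n$. The paper phrases it as a proof by contradiction while you phrase it directly, and you make explicit the (correct) observation that $\widetilde\omega$ and $\tau$ are $\varepsilon$-independent so that \eqref{estimate} applies to every candidate energy, but these are cosmetic differences.
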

\begin{proof}
Assume $\left(\Psi_{\varepsilon}(x),\varepsilon\right)$ is another bound state that cannot be written as stated. Then, write $\Psi_{\varepsilon}(x)=\chi(x;\varepsilon)y_{\varepsilon}(x)$ and, so, $(y_{\varepsilon}(x),\varepsilon)$ solves Eq. \eqref{HDEschro}. In addition, Eq. \eqref{nonintegrability} for $P_n(s)=y_{\varepsilon}(s)$ together with estimate \eqref{estimate}, gives $y_{\varepsilon}(s) \in L^2(I;\omega(\cdot ; \varepsilon))$ as $\left(\Psi_{\varepsilon}(x),\varepsilon\right)$ is a bound state. By Theorem \ref{eigenTheo}, $y_{\varepsilon}(s)=P_n(s)$ for some $n=0,1,\dots$; a contradiction. Hence, the result.
\end{proof}
Notice that we do not state that they are, indeed, bound states. However, in most of the cases, it will be straightforward to check $\Psi_n(x) \in L^2(\mathbb{R})$. As to the success of the reduction, there is not much we can say about it in general terms. Now, applicability of Theorem \ref{eigenTheo} can be obtained by choosing the adequate reduced HDE:
\begin{lemma}
\label{easesChoiceLemma}
Let $\omega(s)$ satisfy Eq. \eqref{orthoPol} for Classical OPs on $I$. Then, $\psi(s)$ vanishes at some point of $I$ and $$\dfrac{\mathrm{d}\psi}{\mathrm{d}s}(s)<0.$$
\end{lemma}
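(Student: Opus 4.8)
The plan is to analyse the auxiliary function $F(s) \coloneqq \phi(s)\omega(s)$. By the conditions defining Classical OPs we have $\phi(s) > 0$ and $\omega(s) > 0$ on $I=(a,b)$, so $F(s) > 0$ there; moreover $\omega$ (hence $F$) is smooth on $I$, since $\phi$ is a polynomial with no zeros in $I$; and the Pearson equation \eqref{pearson} reads $F'(s) = \psi(s)\omega(s)$. Taking $k=0$ in \eqref{orthoPol} shows that $F(s) \to 0$ as $s$ approaches either endpoint of $I$.

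First I would locate a zero of $\psi$. Fixing any $s_0 \in I$, the superlevel set $K \coloneqq \{s \in I : F(s) \ge F(s_0)\}$ is, because $F$ is continuous and tends to $0$ at both ends of $I$, a compact subset of the open interval $I$; hence $F$ attains its maximum over $I$ at some point $c \in K \subset I$, with $F(c) \ge F(s_0) > 0$. Being an interior maximum, $F'(c) = 0$, that is, $\psi(c)\omega(c) = 0$, and since $\omega(c) > 0$ this forces $\psi(c) = 0$: so $\psi$ vanishes at the point $c \in I$.

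Next, the sign of $\mathrm{d}\psi/\mathrm{d}s$ (a constant, as $\psi \in \mathcal{P}_1$) comes from the second-order condition at the interior maximum $c$, namely $F''(c) \le 0$. Differentiating $F' = \psi\omega$ gives $F'' = \frac{\mathrm{d}\psi}{\mathrm{d}s}\,\omega + \psi\,\omega'$, and evaluating at $c$, where $\psi(c)=0$, leaves $F''(c) = \frac{\mathrm{d}\psi}{\mathrm{d}s}\,\omega(c)$; combined with $F''(c)\le 0$ and $\omega(c) > 0$ this yields $\frac{\mathrm{d}\psi}{\mathrm{d}s} \le 0$. To upgrade this to a strict inequality, suppose $\frac{\mathrm{d}\psi}{\mathrm{d}s} = 0$; then $\psi$ is constant and, since $\psi(c)=0$, identically zero, so $F' \equiv 0$ and $F$ is constant on $I$, which is impossible because $F>0$ inside $I$ while $F\to 0$ at both endpoints. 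Hence $\frac{\mathrm{d}\psi}{\mathrm{d}s} < 0$.

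The only step that is not completely routine is the claim that $F$ attains an interior maximum: when $I$ is unbounded one cannot appeal to compactness of $\overline{I}$ directly, which is precisely why I pass to the superlevel set $K$ above. Its compactness uses exactly that $F$ tends to $0$ at both (possibly infinite) endpoints of $I$, i.e. the $k=0$ instance of \eqref{orthoPol}, and everything else is elementary calculus.
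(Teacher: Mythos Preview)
Your argument is correct and entirely self-contained, but it takes a genuinely different route from the paper. The paper argues via the orthogonal-polynomial machinery already developed: for the zero of $\psi$ it observes from the Rodrigues formula that $P_1(s)\propto\psi(s)$ and then invokes the (unproved in the paper) fact that the zeros of positive-definite Classical OPs lie in $I$; for the sign of $\psi'$ it uses the norm relation $d_{11}^2=\lambda_1 d_1^2=-\psi'\,d_1^2$ from Lemma~\ref{intLemma} together with the positivity $d_1^2,d_{11}^2>0$. Your approach instead analyses $F=\phi\omega$ directly with elementary calculus: the boundary condition \eqref{orthoPol} with $k=0$ forces an interior maximum, the Pearson equation identifies $\psi$ with $F'/\omega$, and first- and second-order conditions at that maximum do the rest. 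What your route buys is independence from any OP facts not proved in the text (in particular the location of zeros), at the modest cost of the superlevel-set compactness argument; what the paper's route buys is that it recycles the exact lemmas just established and makes the link to $P_1$ and the squared norms explicit.
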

\begin{proof}
From Rodrigues formula \eqref{rogriguesFormula} together with Pearson Eq. \eqref{pearson}, $P_1(s) \propto \psi(s)$. Despite not proven in this paper, zeros of Classical OPs $(P_n)_n$ for the positive-definite case lie on $I$. This proves first assertion.

From Lemma \ref{intLemma}, $$d^2_{11}=\lambda_1d^2_1=-\dfrac{\mathrm{d}\psi}{\mathrm{d}s}(s)d^2_1.$$ By auxiliary conditions $\omega(s)>0$ and $\phi(s)>0$, $d^2_{11}>0$ and $d^2_1>0$ and, so, $$\dfrac{\mathrm{d}\psi}{\mathrm{d}s}(s)<0.$$ Hence, the result follows.
\end{proof}
To finalize this section, we present one further consideration that is useful when dealing with the particular physical systems discussed in Section \ref{applications}.

\begin{lemma}
Assume notation of Section \ref{reduction}. Assume further that $$\widetilde{\psi}(s)=\dfrac{\mathrm{d}\phi}{\mathrm{d}s}(s).$$ Then, $\omega(s)=\chi^2(s)$. Hence $\widetilde{\omega}(s) \equiv 1$.
\label{lemmaEasesInt}
\end{lemma}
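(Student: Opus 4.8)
The plan is to check by direct substitution that $\chi^{2}(s)$ satisfies the Pearson equation \eqref{pearson} attached to the reduced HDE, and then to conclude from the essential uniqueness of its solution. First I would differentiate $\phi(s)\chi^{2}(s)$ and eliminate $\tfrac{\mathrm{d}\chi}{\mathrm{d}s}$ using the defining relation \eqref{phi}, namely $\tfrac{1}{\chi}\tfrac{\mathrm{d}\chi}{\mathrm{d}s}=\tfrac{\pi}{\phi}$. This gives
\[
\frac{\mathrm{d}(\phi\chi^{2})}{\mathrm{d}s}(s)=\frac{\mathrm{d}\phi}{\mathrm{d}s}(s)\,\chi^{2}(s)+2\phi(s)\chi(s)\frac{\mathrm{d}\chi}{\mathrm{d}s}(s)=\left(\frac{\mathrm{d}\phi}{\mathrm{d}s}(s)+2\pi(s)\right)\chi^{2}(s).
\]
Invoking the hypothesis $\widetilde{\psi}(s)=\tfrac{\mathrm{d}\phi}{\mathrm{d}s}(s)$ together with $\psi(s)=\widetilde{\psi}(s)+2\pi(s)$ from \eqref{tau,sigmabar}, the right-hand side is exactly $\psi(s)\chi^{2}(s)$. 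Hence $\omega(s):=\chi^{2}(s)$ solves $\tfrac{\mathrm{d}(\phi\omega)}{\mathrm{d}s}(s)=\psi(s)\omega(s)$, which is \eqref{pearson}; since the latter is a first-order linear ODE, its non-vanishing solution is determined up to a multiplicative constant, so with the appropriate normalisation $\omega(s)=\chi^{2}(s)$.

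For the last assertion I would simply recall that in general $\omega(s)=\widetilde{\omega}(s)\chi^{2}(s)$ --- the identity established just before Corollary \ref{coroInt} from \eqref{phi} and \eqref{pearson} --- so that $\widetilde{\omega}(s)\equiv 1$. Equivalently, and more directly, substituting $\widetilde{\psi}=\tfrac{\mathrm{d}\phi}{\mathrm{d}s}$ into \eqref{rhotilde} and expanding the left-hand side yields $\phi(s)\tfrac{\mathrm{d}\widetilde{\omega}}{\mathrm{d}s}(s)=0$; as $\phi$ is a nonzero polynomial of degree at most $2$ it vanishes at only finitely many points of $I$, forcing $\widetilde{\omega}$ to be constant on $I$, and the value $1$ is a legitimate choice because \eqref{rhotilde} only pins $\widetilde{\omega}$ down up to a constant.

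There is no genuine obstacle here: the argument is one line of differentiation plus an appeal to uniqueness for a first-order linear ODE. The only point worth a word of care is the normalisation --- each of $\chi$, $\omega$, $\widetilde{\omega}$ is defined only up to a multiplicative constant by \eqref{phi}, \eqref{pearson}, \eqref{rhotilde} respectively --- so the equalities $\omega=\chi^{2}$ and $\widetilde{\omega}\equiv 1$ are to be read modulo that freedom, in line with the convention tacitly used throughout the section.
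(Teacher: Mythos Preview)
Your proposal is correct and follows essentially the same approach as the paper: the paper's proof merely cites Eqs.~\eqref{phi} and \eqref{pearson} and says ``the result follows,'' which is precisely the computation you spell out. Your additional remark about the normalisation freedom and the alternative derivation of $\widetilde{\omega}\equiv 1$ via \eqref{rhotilde} are helpful clarifications but not required.
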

\begin{proof}
By Eqs. \eqref{phi} and \eqref{pearson}, the result follows.
\end{proof}
Previous Corollary \ref{coroInt} is more applicable under such circumstances. This is the case of the Quantum Harmonic Oscillator or the Rosen-Morse II potential, for instance, see Section \ref{applications}. Despite these nuisances, the NU method achieves to solve the discrete spectrum of many systems in terms of Classical OPs.

\subsection{Hypergeometric functions\label{hyperFuncs}}
Next, we generalise Rodrigues formula \eqref{rogriguesFormula} for arbitrary values of $\lambda \in \mathbb{C}$ to construct, generally, the two linearly independent solutions of Eq. \eqref{HDE}. The idea is as follows: assume $\lambda=\lambda_n$ for some $n=0,1,\dots$ given by Eq. \eqref{lambdan}. Then, by Proposition \ref{rodriguesTheo}, polynomial $P_n(x)$ given by $$P_n(x)=\frac{B_n}{\omega(x)}\dfrac{\mathrm{d}^n(\phi^n\omega)}{\mathrm{d}x^n}(x),$$ solves Eq. \eqref{HDE}. By Cauchy's integral formula, it can be written as
\begin{equation}
P_n(x) = \frac{C_n}{\omega(x)}\int_{C}\frac{\phi^{n}(u)\omega(u)}{\left(u-x\right)^{n+1}}\mathrm{d}u,
\label{cauchyIntegral}
\end{equation}
where $C$ is an arbitrary closed contour surrounding $x \in \mathbb{R}$ and $C_n=\dfrac{B_n n!}{2\pi i}$. Now, assume that, for an arbitrary $\nu \in \mathbb{C}$,
\begin{equation}
    \lambda = \lambda_{\nu} \coloneqq -\nu \dfrac{\mathrm{d}\psi}{\mathrm{d}x}(x)-\frac{\nu \left(\nu-1\right)}{2} \dfrac{\mathrm{d}^2\phi}{\mathrm{d}x^2}(x).
    \label{lambdanu}
\end{equation}
Then, under {\it adequate conditions}, substituting $n \in \mathbb{N}_0$ in Eq. \eqref{cauchyIntegral} by previous $\nu \in \mathbb{C}$ yields a solution for Eq. \eqref{HDE}.
\begin{lemma}
\label{particularSolutionTheo}
Assume that $\lambda = \lambda_{\nu}$ for some $\nu \in \mathbb{C}$ as in Eq. \eqref{lambdanu} and $\omega(x)$ is a solution of Pearson Eq. \eqref{pearson}. Define
\begin{equation}
    v(x) = \int_{C}\frac{\phi^{\nu}(u)\omega(u)}{\left(u-x\right)^{\nu+1}}\mathrm{d}u.
    \label{uPart}
\end{equation}
Then, for a normalizing constant $C_{\nu}$,
\begin{equation}
    y(x) = y_{\nu}(x) \coloneqq \frac{C_{\nu}}{\omega(x)}v(x),
\end{equation}
solves Eq. \eqref{HDE} provided that
\begin{enumerate}[label=(\Roman*)]
\item \label{firstCond} we may interchange differentiation with respect to $x$ and integration with respect to $u$ in calculating the derivatives, i.e,
\begin{gather*}
\dfrac{\mathrm{d}v}{\mathrm{d}x}(x)=\left(\nu+1\right)\int_{C}\frac{\phi^{\nu}(u)\omega(u)}{\left(u-x\right)^{\nu+2}}\mathrm{d}u, \\
\dfrac{\mathrm{d}^2v(x)}{\mathrm{d}x^2}   =\left(\nu+1\right)\left(\nu+2\right)\int_{C}\frac{\phi^{\nu}(u)\omega(u)}{\left(u-x\right)^{\nu+3}}\mathrm{d}u,
\end{gather*}
and,
\item \label{secondCond} contour $C$ is chosen so that
\begin{equation*}
    \frac{\phi^{\nu+1}(u)\omega(u)}{\left(u-x\right)^{\nu+2}}\Biggr|^{u_1}_{u_2} = 0,
\end{equation*}
where $u_1$ and $u_2$ are the endpoints of $C$.
\end{enumerate}
\label{lemmaParticularSolutions}
\end{lemma}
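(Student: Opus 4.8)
The plan is to verify directly that the proposed $y_\nu(x) = \frac{C_\nu}{\omega(x)} v(x)$ solves Eq.~\eqref{HDE} by substitution, mimicking the classical Rodrigues-type argument but now in the contour-integral setting. First I would write $\omega(x) y_\nu(x) = C_\nu v(x)$ and compute, using hypothesis~\ref{firstCond} to move the derivatives inside the integral, expressions for $\frac{\mathrm{d}v}{\mathrm{d}x}$ and $\frac{\mathrm{d}^2 v}{\mathrm{d}x^2}$ as contour integrals of $\phi^\nu(u)\omega(u)$ against the kernels $(u-x)^{-\nu-2}$ and $(u-x)^{-\nu-3}$. Then, since $y_\nu = (C_\nu/\omega)\,v$, I would express $\frac{\mathrm{d}y_\nu}{\mathrm{d}x}$ and $\frac{\mathrm{d}^2 y_\nu}{\mathrm{d}x^2}$ in terms of $v$, $\frac{\mathrm{d}v}{\mathrm{d}x}$, $\frac{\mathrm{d}^2v}{\mathrm{d}x^2}$ and the logarithmic derivative of $\omega$, which by the Pearson equation~\eqref{pearson} equals $(\psi - \phi')/\phi$.

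Next I would substitute these into the left-hand side of Eq.~\eqref{HDE}, namely $\phi\,y_\nu'' + \psi\,y_\nu' + \lambda\,y_\nu$, clear the common factor $C_\nu/\omega$, and collect everything under a single integral over $C$. The key algebraic observation is that the resulting integrand should be an exact derivative in $u$: concretely, one expects
$$
\phi(x)\,v'' + \psi(x)\,v' + \lambda\,v \;=\; \mathrm{const}\cdot \int_C \frac{\mathrm{d}}{\mathrm{d}u}\!\left(\frac{\phi^{\nu+1}(u)\,\omega(u)}{(u-x)^{\nu+2}}\right)\mathrm{d}u ,
$$
where the constant involves $\nu+1$ (and possibly $\lambda_\nu$). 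To make this identity appear one uses the Pearson equation once more in the form $(\phi\omega)' = \psi\omega$ to rewrite $\frac{\mathrm{d}}{\mathrm{d}u}(\phi^{\nu+1}\omega) = (\phi^{\nu+1}\omega)'$ in terms of $\phi^\nu\omega$ and $\phi'$, and one expands $(u-x)$-powers via the trivial identity $\phi(u) = \phi(x) + \phi'(x)(u-x) + \tfrac12\phi''(u-x)^2$ together with the analogous linear expansion of $\psi$. Matching the coefficients of the three kernels $(u-x)^{-\nu-1}, (u-x)^{-\nu-2}, (u-x)^{-\nu-3}$ is exactly where the choice $\lambda = \lambda_\nu$ from Eq.~\eqref{lambdanu} is forced, and it is the same bookkeeping that produced $\mu_m, \lambda_n$ in Lemma~\ref{lemma:hypergeometricity} and Proposition~\ref{rodriguesTheo}.

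Finally, once the left-hand side is written as the integral of a total $u$-derivative, hypothesis~\ref{secondCond} says precisely that this integral equals the boundary term $\frac{\phi^{\nu+1}(u)\omega(u)}{(u-x)^{\nu+2}}\big|^{u_1}_{u_2} = 0$, so $\phi\,y_\nu'' + \psi\,y_\nu' + \lambda\,y_\nu = 0$, which is the claim.

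The main obstacle I anticipate is purely computational stamina rather than conceptual difficulty: the substitution $y_\nu = (C_\nu/\omega)v$ produces several terms involving $\omega'/\omega$ and $\omega''/\omega$, and one must use the Pearson equation to eliminate them cleanly before the integrand collapses to an exact derivative; keeping track of the three $(u-x)$-power kernels simultaneously and correctly identifying which combination reproduces $\lambda_\nu$ is delicate. A secondary subtlety is that the argument is formal unless~\ref{firstCond} and~\ref{secondCond} genuinely hold, so the statement is (correctly) phrased conditionally on those two hypotheses, and the proof needs only to exploit them at the two points indicated — interchanging $\mathrm{d}/\mathrm{d}x$ with $\int_C$ when differentiating, and discarding the boundary term at the end.
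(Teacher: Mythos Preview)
Your plan is correct and follows essentially the same route as the paper's proof: use Condition~\ref{firstCond} to differentiate $v$ under the integral, expand $\phi(u)$ and $\psi(u)$ in powers of $(u-x)$, invoke the Pearson equation (in the form $(\phi^{\nu+1}\omega)' = \psi_\nu\,\phi^\nu\omega$ with $\psi_\nu=\psi+\nu\phi'$), and use Condition~\ref{secondCond} to discard the boundary term. The paper organises the computation in the reverse order---it starts from the Pearson identity, integrates by parts against $(u-x)^{-\nu-2}$ to obtain a second-order ODE for $v$ (namely $\phi v'' + (2\phi'-\psi)v' + \text{const}\cdot v = 0$), and only then converts this into the HDE for $y=C_\nu v/\omega$---but the algebraic content is identical.

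One small slip to flag: after clearing $C_\nu/\omega$ from $\phi y'' + \psi y' + \lambda y$, you do \emph{not} get $\phi v'' + \psi v' + \lambda v$ as your displayed line suggests; the $\omega'/\omega$ and $\omega''/\omega$ terms shift the coefficient of $v'$ to $2\phi'-\psi$ and modify the zero-order term. You anticipate exactly this when you mention eliminating $\omega'/\omega$ via Pearson, so this is a cosmetic inaccuracy in the sketch rather than a gap in the argument.
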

\begin{proof}
Define, analogously to polynomial case,
\begin{equation*}
    \omega_{\nu}(x)=\phi^{\nu}(x)\omega(x), \quad \psi_{\nu}(x)=\psi(x)+\nu\phi'(x),
\end{equation*}
and, so, 
\begin{equation*}
    \dfrac{\mathrm{d}(\phi\omega_{\nu})}{\mathrm{d}x}(x)=\psi_{\nu}(x)\omega_{\nu}(x).
\end{equation*}
Multiply above equation by $(u-x)^{-\nu-2}$ and integrate by parts over contour $C$ satisfying Condition \ref{secondCond}. It yields
\begin{equation*}
\frac{\phi^{\nu+1}(u)\omega(u)}{\left(u-x\right)^{\nu+2}}\Biggr|^{u_1}_{u_2} + \left(\nu+2\right)\int_{C}\frac{\phi(u)\omega_{\nu}(u)}{\left(u-x\right)^{\nu+3}}\mathrm{d}u=\int_{C}\frac{\psi_{\nu}(u)\omega_{\nu}(u)}{\left(u-x\right)^{\nu+2}}\mathrm{d}u.
\end{equation*}
Integrated term vanishes and, by expanding $\phi(s)$ and $\psi_{\nu}(s)$ in powers of $u-x$, obtain
\begin{equation*}
\begin{split}
\dfrac{\phi(x)}{\nu+1}\dfrac{\mathrm{d}^2v}{\mathrm{d}x^2}(x)&+\dfrac{\nu+2}{\nu+1}\dfrac{\mathrm{d}\phi}{\mathrm{d}x}(x)\dfrac{\mathrm{d}v}{\mathrm{d}x}(x)+\dfrac{\nu+2}{2}\dfrac{\mathrm{d}^2\phi}{\mathrm{d}x^2}(x)v(x) \\
&=\dfrac{\psi_{\nu}(x)}{\nu+1}\dfrac{\mathrm{d}v}{\mathrm{d}x}(x)+\psi'_{\nu}(x)v(x),
\end{split}
\end{equation*}
where Condition \ref{firstCond} is used.
Inserting explicit form of $\psi_{\nu}(x)$, rewrite
\begin{equation}
\begin{split}
    \phi(x)\dfrac{\mathrm{d}^2v}{\mathrm{d}x^2}(x)&+\left(2\dfrac{\mathrm{d}\phi}{\mathrm{d}x}(x)-\psi(x)\right)\dfrac{\mathrm{d}v}{\mathrm{d}x}(x)- \\
    &\left(\nu+1\right)\left(\dfrac{\mathrm{d}\psi}{\mathrm{d}x}(x)+\dfrac{\nu-2}{2}\dfrac{\mathrm{d}^2\phi}{\mathrm{d}x^2}(x)\right)v(x)=0.
\end{split}
\label{midstep}
\end{equation}
Then, Eq. \eqref{midstep} together with $\omega(x)y(x) = C_{\nu} u(x)$ leads, after straightforward computations, to $$\dfrac{\mathrm{d}}{\mathrm{d}x}\!\left(\phi\omega\dfrac{\mathrm{d}y}{\mathrm{d}x}\right)(x)=-\lambda_{\nu}\omega(x)y(x).$$ Hence, the result.
\end{proof}
Applicability of Lemma \ref{lemmaParticularSolutions} is subjected to Conditions \ref{firstCond} and \ref{secondCond}. First one is sorted out by means of Theorem \ref{theoA.1}. As for the second, simple contours can be found under convenient restrictions on the coefficients of Eq. \eqref{HDE}. Subsequently, these restrictions are removed by using analytic continuation: derivative of functions $y(x)$ of hypergeometric type are also of hypergeometric type. It follows that by continuing $y(x)$ analytically we obtain analytic continuations of its derivative with respect to $x$ and the parameters. By the principle of analytic continuation, solutions $y(x)$ of Eq. \eqref{HDE} constructed following Lemma \ref{lemmaParticularSolutions} with particular restrictions on the parameters will continue to satisfy Eq. \eqref{HDE} in the whole region in which its left-hand side is analytic. Hence, we will be able to construct particular solutions for arbitrary values of $\lambda \in \mathbb{C}$.

In addition, we will transform further original Eq. \eqref{HDE} into another HDE by means of the NU reduction discussed in Section \ref{reduction}: any HDE can be seen as a GHE with the particularity that $\widetilde{\phi}(z) = \lambda \phi(z)$ and $\widetilde{\psi}(z)=\psi(z)$. Thus, following Lemma \ref{lemmaParticularSolutions}, we will further construct particular solutions for the new HDE, subsequently, map them to, in principle, a new linearly independent solution of the original HDE and, thus, completely characterising the space solution of Eq. \eqref{HDE}.

Again, as for the case for the Classical OPs, we will contempt ourselves by examining the cases related to an HDE whose $\phi(x)$ has real but not double roots and whose $\psi(x)$ satisfies $$\dfrac{\mathrm{d}\psi}{\mathrm{d}x}(x) \neq 0.$$ According to the degree of $\phi(x)$, we distinguish three different canonical forms, namely, the Gauss hypergeometric Eq. \eqref{hyperEq}, the confluent hypergeometric Eq. \eqref{confluentHyperEq} and the Hermite Eq. \eqref{hermiteEq} which originate as $\phi(x)$ has degree 2, 1 and 0, respectively. In Subsections \ref{gauss}, \ref{confluent} and \ref{hermite}, we deal with the problem of finding their respective two linearly independent solutions by following above construction. Finally, in Subsection \ref{scattetingstates}, we discuss their properties in relation to solving the scattering states region associated to Eigenvalue Problem \ref{schroProb}.

\subsubsection{Gauss hypergeometric equation\label{gauss}}
Whenever $\phi(x)\in \mathcal{P}_2,$ Eq. \eqref{HDE} can be expressed as a Gauss hypergeometric equation:
\begin{definition}
Let $I \subset \mathbb{R}$ be an open interval not necessarily bounded. A Gauss hypergeometric equation or GHDE is a differential equation of the form
\begin{equation} 
x\left(1-x\right)\dfrac{\mathrm{d}^2y}{\mathrm{d}x^2}(x)+\left(c-\left(a+b+1\right)x\right)\dfrac{\mathrm{d}y}{\mathrm{d}x}(x)-a b y(x)=0, \quad x \in I,
\label{hyperEq}
\end{equation}
where $a, b, c \in \mathbb{C}$ are arbitrary. In addition, $\omega(x)$ and $\nu \in \mathbb{C}$ for its associated Eqs. \eqref{pearson} and \eqref{lambdanu} are given by
\begin{equation}
        \omega(x)=x^{c-1}\left(1-x\right)^{a+b-c}, \quad \nu = -a, \, -b,
        \label{gaussPar}
\end{equation}
respectively.
\end{definition}
We find two linearly independent solutions of Eq. \eqref{hyperEq}. Firstly, we obtain some particular solutions by means of Lemma \ref{particularSolutionTheo} under adequate restrictions on the parameters $a, b, c \in \mathbb{C}$ and the open interval $I \subset \mathbb{R}$.
\begin{lemma}
\label{partSolGauss}
Consider Eq. \eqref{hyperEq} for $I=(0,1)$. Then, the following three contours $u_i:[0,1] \to \mathbb{C}$ for $i=1,2,3$ satisfy Condition \ref{firstCond} associated to such GHDE under the parametric restrictions specified:
\begin{align*}
    & u_1(t)=xt, & \Re(c)>\Re(a)>2, \\
    & u_2(t)=1-(1-x)t, & \Re(c)<\Re(b)+1, \, \Re(a)>2, \\
    & u_3(t)=x/t, & \Re(b)>1,\,\Re(a)>2.
\end{align*}
Their associated particular solutions, $y_i(z)$, are written

\begin{align*}
&y_1(x) = \pFq[skip=4]{2}{1}{a,b}{c}{x}, \label{1part} \nonumber\\
&y_2(x) = \pFq[skip=4]{2}{1}{a,b}{a+b-c+1}{1-x}, \\
&y_3(x) = x^{-a}\pFq[skip=4]{2}{1}{a,a-c+1}{a-b+1}{\dfrac{1}{x}},
\end{align*}
respectively, where\footnote{We also write ${}_2F_1(a,b,c;x) \coloneqq \pFq[skip=4]{2}{1}{a,b}{c}{x},$ when needed.}
\begin{equation}
    \pFq[skip=4]{2}{1}{a,b}{c}{x}=\frac{\Gamma(c)}{\Gamma(a)\Gamma(c-a)} \left(1-x\right)^{c-a-b} \int^{1}_{0}\dfrac{t^{c-a-1}\left(1-t\right)^{a-1}}{\left(1-xt\right)^{b}}\mathrm{d}t,
    \label{hypergeometricFunction}
\end{equation}
is the so-called Gauss hypergeometric function or, simply, hypergeometric function. Here, $c(z)$ stands for the gamma function.
\end{lemma}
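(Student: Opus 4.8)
The plan is to verify, for each of the three contours $u_i$, that Condition \ref{firstCond} of Lemma \ref{lemmaParticularSolutions} holds under the stated parametric restrictions, and then to identify the resulting integral (after the substitution $u = u_i(t)$) as the claimed ${}_2F_1$ expression. The overall structure is: first specialize Lemma \ref{lemmaParticularSolutions} to the Gauss case using the data in Eq. \eqref{gaussPar}, so that $\omega(u)=u^{c-1}(1-u)^{a+b-c}$, $\phi(u)=u(1-u)$, and we take $\nu=-a$ (consistent with $\lambda_\nu=-ab$, which one checks matches $-ab$ from Eq. \eqref{hyperEq} via Eq. \eqref{lambdanu}). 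Then the integrand in Eq. \eqref{uPart} becomes
$$
\frac{\phi^{\nu}(u)\omega(u)}{(u-x)^{\nu+1}}=\frac{u^{-a}(1-u)^{-a}\,u^{c-1}(1-u)^{a+b-c}}{(u-x)^{-a+1}}=\frac{u^{c-a-1}(1-u)^{b-c}}{(u-x)^{1-a}}.
$$
For contour $u_1(t)=xt$ with $t\in[0,1]$ (so the endpoints are $u=0$ and $u=x$), substitute and collect powers of $x$; one obtains, up to the normalizing constant $C_\nu$ and the prefactor $\omega(x)^{-1}=x^{1-c}(1-x)^{c-a-b}$, an integral of the form $\int_0^1 t^{c-a-1}(1-t)^{a-1}(1-xt)^{b-c}\,\mathrm{d}t$ — which, after absorbing the $(1-x)^{c-a-b}$ and fixing $C_\nu$ so the constant equals $\Gamma(c)/(\Gamma(a)\Gamma(c-a))$, is exactly the right-hand side of Eq. \eqref{hypergeometricFunction}. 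The restrictions $\Re(c)>\Re(a)>2$ are precisely what makes the integrand vanish at both endpoints with enough room for Condition \ref{secondCond} and the differentiation-under-the-integral-sign hypothesis, Condition \ref{firstCond}, to go through (the boundary term $\phi^{\nu+1}(u)\omega(u)(u-x)^{-\nu-2}$ has a factor $u^{c-a}$ at $u=0$ and $(1-u)^{b-c+1}$-type behaviour, and the exponent conditions force these to kill the endpoint contributions; differentiating under the integral adds powers of $(u-x)$ in the denominator, and $\Re(a)>2$ guarantees enough integrability near $u=x$).

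For the remaining two contours the scheme is identical, only the change of variable differs. For $u_2(t)=1-(1-x)t$ (endpoints $u=1$ and $u=x$), the natural symmetry $u\mapsto 1-u$, $x\mapsto 1-x$ of the Gauss equation sends the $y_1$ computation to the $y_2$ expression with argument $1-x$ and parameter $c$ replaced by $a+b-c+1$; one can either redo the substitution directly or invoke this symmetry, and the restriction $\Re(c)<\Re(b)+1$, $\Re(a)>2$ is the transported version of the $y_1$ restriction. For $u_3(t)=x/t$ (endpoints $u=x$ and $u=\infty$ as $t\to 0$), substituting and extracting the overall power $x^{-a}$ gives the integral representation of ${}_2F_1(a,\,a-c+1;\,a-b+1;\,1/x)$ times $x^{-a}$, with the behaviour at $u=\infty$ controlled by $\Re(b)>1$ and the behaviour near $u=x$ by $\Re(a)>2$. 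In each case, once the integral is in the standard Euler form, Eq. \eqref{hypergeometricFunction} is applied verbatim to rewrite it as the stated ${}_2F_1$.

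The main obstacle I expect is the bookkeeping in checking Condition \ref{firstCond} and \ref{secondCond} cleanly for all three contours simultaneously: one must track the exponents of $u$, $1-u$, and $u-x$ in the boundary term $\phi^{\nu+1}(u)\omega(u)/(u-x)^{\nu+2}$ at each endpoint and confirm that the inequalities on $\Re(a)$, $\Re(b)$, $\Re(c)$ make it vanish, while simultaneously leaving the differentiated integrals (with denominators $(u-x)^{\nu+2}$ and $(u-x)^{\nu+3}$) absolutely convergent — the $u=x$ endpoint is the delicate one, since $u_1,u_2,u_3$ all have $x$ as an endpoint, and it is there that $\Re(a)>2$ is needed. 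I would organize this as a single lemma-internal computation: write $\phi^{\nu+1}\omega\,(u-x)^{-\nu-2}=u^{c-a}(1-u)^{b-c+1}(u-x)^{a-2}$, read off the exponent at $u=0$ (namely $c-a$), at $u=1$ (namely $b-c+1$), and at $u=x$ (namely $a-2$), and note that $\Re(c-a)>0$, and $\Re(a-2)>0$ give vanishing at $u=0$ and $u=x$ for $u_1$; the other two contours are handled by the same exponent table together with the fact that the relevant endpoint for $u_2$ is $u=1$ (needing $\Re(b-c+1)>0$, i.e. $\Re(c)<\Re(b)+1$) and for $u_3$ is $u=\infty$ (where $u^{c-a}(u)^{b-c+1}(u)^{a-2}=u^{b-1}\to 0$ requires $\Re(b)<1$ — one must be careful with orientation here, but the growth at infinity is governed by $\Re(b)>1$ making the integral, not the boundary term, converge). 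Everything else — recognizing Eq. \eqref{hypergeometricFunction} and fixing $C_\nu$ — is routine.
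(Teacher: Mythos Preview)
Your proposal is correct and follows essentially the same approach as the paper: you compute the boundary term $\phi^{\nu+1}(u)\omega(u)(u-x)^{-\nu-2}=u^{c-a}(1-u)^{b-c+1}(u-x)^{a-2}$ exactly as the paper does, read off the endpoint exponents to verify Condition~\ref{secondCond}, invoke Theorem~\ref{theoA.1} for Condition~\ref{firstCond}, and then substitute to recognize Eq.~\eqref{hypergeometricFunction} with the normalization fixed by ${}_2F_1(a,b,c;0)=1$. The paper's proof is in fact terser than yours---it treats only $u_1$ explicitly and dismisses $u_2,u_3$ with ``obtained similarly''---so your concern about the $u=\infty$ endpoint of $u_3$ (where the exponent $b-1$ seems to require $\Re(b)<1$ rather than the stated $\Re(b)>1$) is a legitimate observation about the lemma's hypotheses that the paper itself does not address.
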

\begin{proof}
For Eq. \eqref{hyperEq}, Condition \ref{firstCond} translates to
$$
p(u_2)-p(u_1) \coloneqq u^{c-a}\left(1-u\right)^{b-c+1}\left(u-x\right)^{a-2}\Bigr|^{u_2}_{u_1}=0,
$$
where $\omega(x)$ and $\nu \in \mathbb{C}$ are given by Eq. \eqref{gaussPar}. Under such restrictions, both $p(u_1)$ and $p(u_2)$ vanish for each one of the contours. Now, define
$$
f(t,x) = t^{c-a-1}\left(1-t\right)^{a-1}\left(1-xt\right)^{-b}, \quad (t,x) \in (0,1) \times (0,1) \coloneqq \Omega.
$$
Hence, $f(t,x) \in C(\Omega)$ and $f(t_0,x) \in \mathcal{H}(0,1)$ for every $t_0 \in (0,1)$: Theorem \ref{theoA.1} can be employed for Condition \ref{secondCond}.

Accordingly to Lemma \ref{particularSolutionTheo}, ${}_2F_1(a,b,c;x)$ given by Eq. \eqref{hypergeometricFunction} constitutes a particular solution of Eq. \eqref{hyperEq} for $a=0$ and $b=1$ whenever $\Re(c)>\Re(a)>2$, for instance. The rest of the particular solutions are obtained similarly from the rest of the contours. 

Finally, constant is chosen so that ${}_2F_1(a,b,c;0)=1$.
\end{proof}
We stress that ${}_2F_1(a,b,c;x)$ solves Eq. \eqref{hyperEq} only whenever the restrictions imposed on the contour are met. Next step consist of increasing the number of these particular solutions by transforming the equation through means of the Nikiforov-Uvarov reduction explained throughout Section \ref{reduction}:
\begin{lemma}
\label{partSolsGauss}
Assume $w(x)=f(a,b,c;x)$ solves Eq. \eqref{hyperEq}. Then,
\begin{align}
&w_1(x) = x^{1-c}f(a-c+1,b-c+1,2-c;x), \label{sol2Gauss}\\
&w_2(x) = \left(1-x\right)^{c-a-b}f(c-a,c-b,c;x), \label{sol3Gauss} \\
&w_3(x) = f(b, a, c;x), \label{sol4Gauss}
\end{align}
also formally solve Eq. \eqref{hyperEq}.
\end{lemma}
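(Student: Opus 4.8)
The plan is to verify directly that each of the three functions $w_1,w_2,w_3$ satisfies Eq. \eqref{hyperEq}, exploiting the symmetries of the Gauss hypergeometric equation rather than re-deriving everything from the integral representation. The cleanest route uses the structure already set up in Section \ref{reduction}: an HDE is a GHE with $\widetilde{\phi}(x)=\lambda\phi(x)$ and $\widetilde{\psi}(x)=\psi(x)$, so the substitution $y(x)=\chi(x)\bar y(x)$ of the Lemma in Section \ref{reduction} (with $\pi(x)$ chosen linear) transforms Eq. \eqref{hyperEq} into another equation of the same type, whose new parameters one reads off from Eqs. \eqref{tau,sigmabar} and \eqref{lambda}. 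For $w_3$, however, no substitution is needed: the equation \eqref{hyperEq} is manifestly symmetric under the interchange $a\leftrightarrow b$ (the coefficient $c-(a+b+1)x$ and the term $-aby$ are both symmetric), so if $f(a,b,c;x)$ solves it then so does $f(b,a,c;x)$. This disposes of \eqref{sol4Gauss} in one line.

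For $w_1(x)=x^{1-c}f(a-c+1,b-c+1,2-c;x)$, I would take $\chi(x)=x^{1-c}$, compute $\tfrac{1}{\chi}\chi' = (1-c)/x$, so that $\pi(x)/\phi(x)=(1-c)/x$ with $\phi(x)=x(1-x)$ forces $\pi(x)=(1-c)(1-x)$, a genuine element of $\mathcal{P}_1$. Then the new $\psi$ is $\psi(x)+2\pi(x)$; substituting $\psi(x)=c-(a+b+1)x$ and simplifying, one should get exactly $(2-c)-((a-c+1)+(b-c+1)+1)x$, and the new $\lambda$ from Eq. \eqref{lambda} should equal $-(a-c+1)(b-c+1)$. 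Thus the transformed equation is \eqref{hyperEq} with $(a,b,c)$ replaced by $(a-c+1,b-c+1,2-c)$; since $f(a-c+1,b-c+1,2-c;x)$ solves \emph{that} equation by hypothesis, multiplying back by $\chi(x)=x^{1-c}$ shows $w_1$ solves the original. The argument for $w_2(x)=(1-x)^{c-a-b}f(c-a,c-b,c;x)$ is identical with $\chi(x)=(1-x)^{c-a-b}$, giving $\pi(x)/\phi(x) = -(c-a-b)/(1-x)$, hence $\pi(x)=-(c-a-b)x$; one checks the new $\psi$ stays $c-(\ldots)x$ with the parameter shift $a\mapsto c-a$, $b\mapsto c-b$, $c\mapsto c$, and the new $\lambda$ becomes $-(c-a)(c-b)$.

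Alternatively, and perhaps more in the spirit of keeping everything elementary, one can simply plug each $w_i$ into the left-hand side of \eqref{hyperEq}, use the Leibniz rule to expand derivatives of the product $\chi(x)f(\cdots;x)$, and collect terms: the coefficient of $f$, of $f'$, and of $f''$ each reduces — after using that $f$ itself satisfies the hypergeometric equation with the shifted parameters — to zero. This is a routine but slightly tedious computation; I would present it only schematically.

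The main obstacle is bookkeeping rather than conceptual: one must be careful that the power-function prefactors $x^{1-c}$ and $(1-x)^{c-a-b}$ are only defined (and the manipulations only valid) on $I=(0,1)$ away from the branch points $x=0,1$, which is why the statement says the $w_i$ \emph{formally} solve the equation — the identities hold wherever the prefactors and $f$ are analytic, and extend by the principle of analytic continuation exactly as discussed before Subsection \ref{gauss}. A secondary point of care is that $f$ here denotes \emph{any} solution of \eqref{hyperEq}, not necessarily ${}_2F_1$; but since the transformation $y\mapsto \chi^{-1}y$ is a linear bijection between the solution space of \eqref{hyperEq} with one parameter triple and that with the shifted triple, the claim holds for every solution $f$, which is precisely what is asserted.
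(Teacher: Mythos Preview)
Your proposal is correct and follows essentially the same route as the paper: both use the NU reduction of Section~\ref{reduction} with $\chi_1(x)=x^{1-c}$ (hence $\pi_1(x)=(1-c)(1-x)$) and $\chi_2(x)=(1-x)^{c-a-b}$ (hence $\pi_2(x)=(a+b-c)x$) to obtain the parameter shifts for $w_1$ and $w_2$, and both dispatch $w_3$ by the manifest $a\leftrightarrow b$ symmetry of Eq.~\eqref{hyperEq}. The only cosmetic difference is that the paper first locates $k_0$ via the discriminant condition $\Delta P_2(k_0)=0$ and then reads off $\pi(x)$, whereas you start from the prefactor $\chi(x)$ and recover $\pi(x)$ from $\chi'/\chi=\pi/\phi$; the resulting $\pi$, $\psi$, $\lambda$ and parameter triples coincide exactly.
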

\begin{proof}
Transform Eq. \eqref{hyperEq} following NU reduction and the identification $$\phi(x)=x(1-x), \quad \widetilde{\psi}(x)=c-\left(a+b+1\right)x, \quad \widetilde{\phi}(x)=-a b \phi(x).$$
First step, see Proposition \ref{P2lemma}, is to find $k_0 \in \mathbb{C}$ satisfying that $\Delta P_2(k_0)=0$ for
$$
P_2(x;k) = \left(\frac{1-c+\left(a+b-1\right)x}{2}\right)^{2}+(k+a b)x(1-x).
$$
Obtain two possible values, $k_1,\,k_2 \in \mathbb{C}$ for such $k_0 \in \mathbb{C}$,
$$
k_1=-c^2+\left(b+a+1\right)c-(a+1)b-a, \quad k_2=-ab.
$$
The transformation given by $k_0=k_2$ is redundant. Hence, set $k_0=k_1$, and obtain the following two possibilities for $\pi(x)$ and $\chi(x)$ given by Eqs. \eqref{pichoice} and \eqref{phi}, respectively,
\begin{enumerate}[]
\item[(A)] $\pi_1(x) = (1-c)(1-x), \quad \chi_1(x)=x^{1-c},$
\item[(B)] $\pi_2(x) = (a+b-c)x, \;\,\,\quad \chi_2(x)=(1-x)^{c-a-b}$.
\end{enumerate}
Accordingly, Eqs. \eqref{tau,sigmabar} and \eqref{lambda} yields 
\begin{enumerate}[]
\item[(A)] $\lambda_1 = -(a-c+1)(b-c+1), \quad \psi_1(x)=2-c-x\left(a+b+3-2c\right)$,
\item[(B)] $\lambda_2 = -(c-a)(c-b), \quad \quad \quad \quad \;\, \psi_2(x)=c-x\left(1-a-b+2c\right)$.
\end{enumerate}
Hence, cases (A) and (B) correspond to an Eq. \eqref{hyperEq} whose parameters $(a,b,c) \in \mathbb{C}^{3}$ are given by 
\begin{enumerate}
    \item[(A)] $a_1 = a -c + 1, \quad b_1=b-c+1, \quad c_1=2-c,$
    \item[(B)] $a_2 = c - a, \quad \quad \;\;\, b_2=c - b, \quad \quad \;\;\, c_2=c,$
\end{enumerate}
respectively.
Finally, solutions of the original GHE are written as $\chi_i(x)y_i(x)$ where $y_i(x)$ solves the GHDE defined by cases (A) or (B). Therefore,
\begin{align*}
&w_1(x) = x^{1-c}f(a-c+1,b-c+1,2-c;x),\\
&w_2(x) = \left(1-x\right)^{c-a-b}f(c-a,c-b,c;x), \\
&w_3(x) = f(b, a, c;x),
\end{align*}
solve Eq. \eqref{hyperEq} assuming ${}_2F_1(a,b,c;x)$ is a particular solution. As for $w_3(x)$, it is obtained by noticing that Eq. \eqref{hyperEq} is invariant by interchanging $a$ and $b$. Hence, the result.
\end{proof}
We obtained a particular solution for Eq. \eqref{hyperEq} by Lemma \ref{partSolGauss}. Now, introducing ${}_2F_1(a,b,c;x)$ in previous Lemma \ref{partSolsGauss}, we increase the number of linearly independent solutions in, at least, an specific region. Indeed, put $w(x) = {}_2F_1(a,b,c;x)$ and obtain associated $w_i(x)$ through Eqs. \eqref{sol2Gauss}, \eqref{sol3Gauss} and \eqref{sol4Gauss}. Their integral representation exist simultaneously provided that $$0 < \Re(a) < 1, \quad 0 < \Re(c - a) < 1.$$ In that case, associated $w(x)$ and $w_1(x)$ are linearly independent if we further assume $c \neq 1$: by definition, $w(0)={}_2F_1(a, b, c, 0) = 1,$ which, in turn, gives 
\begin{gather*}
    \lim_{x \to 0} w_1(x) = \lim_{x \to 0} x^{1-c}\pFq[skip=4]{2}{1}{a-c+1,b-c+1}{2-c}{x} = 0,\quad \Re(1-c)>0,
\end{gather*}
for instance. In addition, there exists a relation among the rest of the $w_{i}(x)$. By comparison, we find
\begin{align*}
  &\pFq[skip=4]{2}{1}{a,b}{c}{x}=\left(1-x\right)^{c-a-b}\pFq[skip=4]{2}{1}{c-a,c-b}{c}{x}=w_2(x), \\
  &\pFq[skip=4]{2}{1}{a,b}{c}{x} = \pFq[skip=4]{2}{1}{b,a}{c}{x}=w_3(x).
\end{align*}
Hence, we may replace representation of ${}_2F_1(a,b,c;x)$ given in Eq. \eqref{hypergeometricFunction} by the simpler
$$
\pFq[skip=4]{2}{1}{a,b}{c}{x} = \frac{\Gamma(c)}{\Gamma(a)\Gamma(c-a)}\int_{0}^{1}\dfrac{t^{a-1}\left(1-t\right)^{c-a-1}}{\left(1-xt\right)^{b}}\mathrm{d}t,
$$
provided that the adequate conditions on the parameters hold. This temporarily reduces the region of validity of the solution.
However, through next Lemma \ref{lemmaAnaHyper}, we show that such $F(a, b, c; x)$ is analytic in a domain bigger than the originally imposed:

\begin{lemma}
\label{lemmaAnaHyper}
The hypergeometric function, ${}_2F_1(a, b, c; z)$, defined by the integral representation
\begin{equation}
    \pFq[skip=4]{2}{1}{a,b}{c}{z} = \frac{\Gamma(c)}{\Gamma(a)\Gamma(c-a)}\int_{0}^{1}t^{a-1}\left(1-t\right)^{c-a-1}\left(1-zt\right)^{-b}\mathrm{d}t,
    \label{hypergeometricFunction2}
\end{equation}
is analytic in each $a$, $b$, $c$ and $z$ in $\mathbb{C}$ for $\Re(c)>\Re(a)>0$ and $\left|\arg(1-z)\right| < \pi$.
\end{lemma}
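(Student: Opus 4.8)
The plan is to view the right-hand side of \eqref{hypergeometricFunction2} as an integral depending holomorphically on the parameters $a,b,c,z$ and to invoke Theorem \ref{theoA.1} once a locally uniform integrable majorant is available. First I would dispose of the prefactor: since $\Re(c)>\Re(a)>0$ forces $c\notin\{0,-1,-2,\dots\}$, $a\notin\{0,-1,\dots\}$ and $c-a\notin\{0,-1,\dots\}$, the factor $\Gamma(c)/\bigl(\Gamma(a)\Gamma(c-a)\bigr)$ is holomorphic throughout the region, so it remains to treat $J(a,b,c,z)\coloneqq\int_0^1 t^{a-1}(1-t)^{c-a-1}(1-zt)^{-b}\,\mathrm{d}t$. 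I would then record that the hypothesis $|\arg(1-z)|<\pi$ says exactly that $z$ lies in the cut plane $\mathbb{C}\setminus[1,\infty)$; for such $z$ and $t\in[0,1]$ the point $1-zt$ traverses the straight segment from $1$ (at $t=0$) to $1-z$ (at $t=1$), and this segment does not meet $(-\infty,0]$ (its imaginary part, an affine function of $t$, vanishes only at $t=0$, where the value is $1$, unless the whole segment is real, in which case $z<1$ and the segment lies in $(0,\infty)$). Hence $(1-zt)^{-b}$ is well defined via the principal branch and, for each fixed $t\in(0,1)$, the integrand is jointly holomorphic in $(a,b,c,z)$.

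Next I would build the majorant on an arbitrary compact subset $K$ of the parameter region. There $\Re(a)\ge\alpha_0>0$ and $\Re(c-a)\ge\gamma_0>0$ for fixed $\alpha_0,\gamma_0$, and since $t$ and $1-t$ are real and positive on $(0,1)$ one has $|t^{a-1}|=t^{\Re(a)-1}\le t^{\alpha_0-1}$ and $|(1-t)^{c-a-1}|=(1-t)^{\Re(c-a)-1}\le(1-t)^{\gamma_0-1}$. For the last factor I would use compactness again: the set $\{\,1-zt : t\in[0,1],\ (a,b,c,z)\in K\,\}$ is a compact subset of the open cut plane, hence stays bounded and bounded away from both $0$ and $(-\infty,0]$, so $|1-zt|$ and $\arg(1-zt)$ remain in fixed compact ranges; combined with the boundedness of $b$ on $K$ this gives $|(1-zt)^{-b}|=|1-zt|^{-\Re(b)}e^{\Im(b)\arg(1-zt)}\le M_K$. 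Altogether the integrand is dominated on $K$ by $M_K\,t^{\alpha_0-1}(1-t)^{\gamma_0-1}\in L^1(0,1)$, independently of the parameters.

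With the majorant in hand I would conclude by the usual exhaustion argument: the proper integrals $J_\delta(a,b,c,z)\coloneqq\int_\delta^{1-\delta}t^{a-1}(1-t)^{c-a-1}(1-zt)^{-b}\,\mathrm{d}t$ are holomorphic on $K$ by Theorem \ref{theoA.1} (applied in each variable with the others frozen, together with joint continuity, or directly in its several-variable form), while dominated convergence yields $J_\delta\to J$ uniformly on $K$ as $\delta\to0^+$; a uniform limit of holomorphic functions being holomorphic, $J$, and hence ${}_2F_1(a,b,c;z)$, is holomorphic on the stated region. I expect the only genuinely delicate point to be the uniform control of $(1-zt)^{-b}$: although $1-zt$ avoids the branch cut for every individual $z$ with $|\arg(1-z)|<\pi$, one needs this with uniform slack, and that is precisely what the elementary remark about the segment from $1$ to $1-z$ provides once one passes to a compact $K$.
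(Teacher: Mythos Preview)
Your proposal is correct and follows essentially the same approach as the paper: both invoke Theorem \ref{theoA.1} after establishing a uniform majorant of the form $C\,t^{\alpha_0-1}(1-t)^{\gamma_0-1}$ on compacta of the parameter region, then exhaust. Your treatment is more explicit than the paper's (you handle the $\Gamma$-prefactor, justify that the segment $\{1-zt:t\in[0,1]\}$ avoids the branch cut, and spell out the truncated-integral limit), but the strategy is identical.
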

\begin{proof} 
The domain of analyticity is obtained by finding the domain in which the integral in Eq. \eqref{hypergeometricFunction2} converges uniformly with respect to $z$ and the rest of parameters $a$, $b$ and $c$, see Theorem \ref{theoA.1}. This occurs in the regions $$\left| z \right| \le N, \;  \left|\text{arg}(1-\delta-z)\right| \le \pi - \delta, \quad  \delta \le \Re(a) \le N, \quad \delta \le \Re(c-a) \le N, \quad \left|b\right| \le N, $$ for $\delta,\,N > 0$ because, whenever $0 \le t \le 1$, $$\left|t^{a-1}\left(1-t\right)^{c-a-1}\left(1-zt\right)^{-b}\right| < Ct^{\delta-1}\left(1-t\right)^{\delta-1},$$ and its integral converges. The restriction $\left|\text{arg}(1-\delta-z)\right| \le \pi - \delta$ removes the singular point $z=t^{-1}$. Setting $\delta \to 0$ and $N \to \infty$, the result follows. Last condition means that there is a cut in the $z$ plane along the real axis for $z \ge 1$.
\end{proof}
Now, we further continue analytically both ${}_2F_1(a, b, c; z)$ and its derivatives by finding the following recursion and differential relations:
\begin{lemma}
\label{hyperRecurLemma}
    The following relations hold:
    \begin{align}
&\hypergeometricsetup{
  symbol=\dfrac{\partial\, {}_2F_1}{\partial z},
}\pFq[skip=4]{}{}{a,b}{c}{z} = \dfrac{a b}{c}\hypergeometricsetup{
  symbol=F,
}\pFq[skip=4]{2}{1}{a+1,b+1}{c+1}{z}, \label{diffhyperRec} \\
&\hypergeometricsetup{
  symbol=\mathbf{F},
}\pFq[skip=4]{2}{1}{a,b}{c}{z}=(a+1)(b+1)z(1-z)\pFq[skip=4]{2}{1}{a+2,b+2}{c+2}{z}+\label{recurhyper} \\
&\hypergeometricsetup{
  symbol=\mathbf{F},
}\quad \quad \quad \quad \quad \quad \quad \quad \; \, \left(c-(a+b+1)z\right)\pFq[skip=4]{2}{1}{a+1,b+1}{c+1}{z}, \nonumber \\
&\hypergeometricsetup{
  symbol=\mathbf{F},
}\pFq[skip=4]{2}{1}{a,b}{c}{z}=(c-a+1)(c-b+1)\dfrac{z}{1-z}\pFq[skip=4]{2}{1}{a,b}{c+2}{z} +\label{recurhyper2} \\
&\hypergeometricsetup{
  symbol=\mathbf{F},
}\quad \quad \quad \quad \quad \quad \quad \quad \; \quad \dfrac{c-(2a-a-b+1)z}{1-z}\pFq[skip=4]{2}{1}{a,b}{c+1}{z}, \nonumber
\end{align}
where ${}_2F_1(a,b,c;z)$ is given by Eq. \eqref{hypergeometricFunction2}, and ${}_2\mathbf{F}_1(a,b,c;z)$ is defined by 
\begin{equation}
    \label{hyperTheta}
\hypergeometricsetup{
  symbol=\mathbf{F},
}\pFq[skip=4]{2}{1}{a,b}{c}{z}\coloneqq\dfrac{1}{\Gamma(c)}\hypergeometricsetup{
  symbol=F,
}\pFq[skip=4]{2}{1}{a,b}{c}{z}.
    \end{equation}
\end{lemma}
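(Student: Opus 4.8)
The plan is to prove the three identities in sequence, bootstrapping from the Euler-type integral representation \eqref{hypergeometricFunction2} and then propagating to the remaining two through the Gauss hypergeometric equation \eqref{hyperEq} and Euler's transformation. For the differentiation formula \eqref{diffhyperRec}, I would differentiate \eqref{hypergeometricFunction2} in $z$ under the integral sign; this is legitimate in the region $\Re(c)>\Re(a)>0$, $|\arg(1-z)|<\pi$ by the uniform-convergence estimate already exploited in Lemma \ref{lemmaAnaHyper} (via Theorem \ref{theoA.1}). Since $\partial_z(1-zt)^{-b}=bt(1-zt)^{-b-1}$, one obtains $b\int_{0}^{1} t^{a}(1-t)^{c-a-1}(1-zt)^{-b-1}\,dt$ up to the prefactor $\Gamma(c)/(\Gamma(a)\Gamma(c-a))$; but this integral is exactly the one occurring in the representation of ${}_2F_1(a+1,b+1,c+1;z)$, now carrying the prefactor $\Gamma(c+1)/(\Gamma(a+1)\Gamma(c-a))$. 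Reconciling the two $\Gamma$-prefactors via $\Gamma(c+1)=c\,\Gamma(c)$ and $\Gamma(a+1)=a\,\Gamma(a)$ produces the factor $ab/c$, and analytic continuation in all four parameters (again Lemma \ref{lemmaAnaHyper}) removes the temporary restrictions.

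For \eqref{recurhyper}, I would use that $y(z)={}_2F_1(a,b,c;z)$ solves \eqref{hyperEq}. Applying \eqref{diffhyperRec} once gives $y'(z)=\frac{ab}{c}{}_2F_1(a+1,b+1,c+1;z)$, and applying it a second time (with each parameter raised by one) gives $y''(z)=\frac{ab}{c}\cdot\frac{(a+1)(b+1)}{c+1}{}_2F_1(a+2,b+2,c+2;z)$. Substituting these into $z(1-z)y''+(c-(a+b+1)z)y'-ab\,y=0$, dividing by $ab$, and rewriting each ${}_2F_1$ in terms of ${}_2\mathbf{F}_1$ via \eqref{hyperTheta} — so that the denominators $c$ and $c(c+1)$ are absorbed by $\Gamma(c+1)=c\,\Gamma(c)$ and $\Gamma(c+2)=c(c+1)\,\Gamma(c)$ while the overall $\Gamma(c)$ cancels — yields \eqref{recurhyper} after rearrangement.

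For \eqref{recurhyper2}, I would invoke Euler's transformation ${}_2F_1(a,b,c;z)=(1-z)^{c-a-b}{}_2F_1(c-a,c-b,c;z)$, established in the discussion following Lemma \ref{partSolsGauss} (it is $w_2$). Apply \eqref{recurhyper} with $(a,b,c)$ replaced by $(c-a,c-b,c)$, and then use Euler's transformation on each of the three hypergeometric functions that appear to return to the numerator parameters $a,b$ (the denominator parameters $c$, $c+1$, $c+2$ are untouched), noting that the transformation contributes the factors $(1-z)^{a+b-c}$, $(1-z)^{a+b-c-1}$, $(1-z)^{a+b-c-2}$, respectively. Dividing through by the common $(1-z)^{a+b-c}$ cancels the bulk and leaves the residual powers $(1-z)^{-1}$ and $(1-z)^{-2}$ which, combined with the $z(1-z)$ factor already present in \eqref{recurhyper}, give the stated coefficients $z/(1-z)$ and $1/(1-z)$; the linear-in-$z$ coefficient is $(c-a)+(c-b)+1=2c-a-b+1$, emerging from the $a+b+1$ slot of \eqref{recurhyper} under the substitution.

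The algebraic and $\Gamma$-function bookkeeping is routine; the only genuine points of care are the interchange of differentiation and integration in the first identity and the analytic continuation in the parameters throughout, both of which are covered by the uniform-convergence argument behind Lemma \ref{lemmaAnaHyper} and Theorem \ref{theoA.1}. I expect the power-of-$(1-z)$ bookkeeping in \eqref{recurhyper2} — tracking precisely which power Euler's transformation contributes at each shifted parameter triple — to be the most error-prone step, so I would carry out that accounting explicitly rather than by analogy.
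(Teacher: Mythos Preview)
Your proposal is correct and follows essentially the same route as the paper: differentiate the Euler integral \eqref{hypergeometricFunction2} for \eqref{diffhyperRec}, insert the derivative formula into \eqref{hyperEq} for \eqref{recurhyper}, and obtain \eqref{recurhyper2} from \eqref{recurhyper} via the substitution $(a,b)\mapsto(c-a,c-b)$. The paper compresses this last step to a single sentence (``replace $a$ and $b$ by $c-a$ and $c-b$''), but that substitution alone does not return the numerator parameters to $(a,b)$; your explicit use of Euler's transformation on each term is exactly what is needed to finish, and your computation of the linear coefficient $2c-a-b+1$ is correct (the printed $2a-a-b+1$ is a typo).
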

\begin{proof}
Introduce derivative with respect to $z$ in the integral representation given by Eq. \eqref{hypergeometricFunction2} by means of Theorem \ref{theoA.1}. Obtain
\begin{equation*}
    \hypergeometricsetup{
  symbol=\dfrac{\partial\, {}_2F_1}{\partial z},
}\pFq[skip=4]{}{}{a,b}{c}{z} = b\frac{\Gamma(c)}{\Gamma(a)\Gamma(c-a)}\int_{0}^{1}\dfrac{t^{a}\left(1-t\right)^{c-a-1}}{\left(1-zt\right)^{b+1}}\mathrm{d}t,
\end{equation*}
which corresponds to Eq. \eqref{diffhyperRec} since $\Gamma(z)=z\Gamma(z-1)$. As for Eq. \eqref{recurhyper}, introduce Eq. \eqref{diffhyperRec} into Eq. \eqref{hyperEq} which is valid by the principle of analytic continuation. Finally, replace $a$ and $b$ by $c-a$ and $c-b$ in Eq. \eqref{recurhyper} to obtain Eq. \eqref{recurhyper2}.
\end{proof}
Notice that corresponding relations given by Eqs. \eqref{recurhyper} and \eqref{recurhyper2} for ${}_2F_1(a,b,c;z)$ contain some terms multiplied by $\Gamma(c)$, $\Gamma(c+1)$ or $\Gamma(c+2)$, and, so, it is not possible to continue it analytically to the whole complex space $\mathbb{C}^4$ by these means. However, it is possible to continue ${}_2\mathbf{F}_1(a,b,c;z)$
given by Eq. \eqref{hyperTheta} to the whole complex space and, consequently, ${}_2F_1(a,b,c;z)$ with the further restriction $-c \notin \mathbb{N}_0$.

Indeed, by Lemma \ref{lemmaAnaHyper}, ${}_2F_1(a,b,c;z)$ is analytic in each $a$, $b$, $c$ and $z$ for $\Re(c)>\Re(a)>0$ and so is ${}_2\mathbf{F}_1(a,b,c;z)$. Now, whenever $\Re(c) \le \Re(a)$, we increase repeatedly $c$ by making use of Eq. \eqref{recurhyper2}. Whenever $\Re(a) \le 0$, we increase repeatedly $a$ and $c$ by Eq. \eqref{recurhyper}. Hence, we obtain an analytic continuation of ${}_2\mathbf{F}_1(a,b,c;z)$ to the whole complex space. Finally, Eq. \eqref{diffhyperRec} continues its derivatives and, by the principle of analytic continuation and Lemma \ref{partSolGauss}, we obtain a solution of Eq. \eqref{hyperEq} valid for every value of its parameters.

Putting it all together, we obtain the following result:
\begin{prop}
\label{gaussProp}
    The hypergeometric functions
    \begin{equation}
        \begin{split}
            &y_1(z)=\pFq[skip=4]{2}{1}{a,b}{c}{z}, \\
            &y_2(z)=z^{1-c}\pFq[skip=4]{2}{1}{a-c+1,b-c+1}{2-c}{z},
        \end{split}
        \label{solHyper}
    \end{equation}
    with integral representation given by Eq. \eqref{hypergeometricFunction2}, are analytic in each variable and are linearly independent solutions of Eq. \eqref{hyperEq} under the restriction $c \notin \mathbb{Z}$.
\end{prop}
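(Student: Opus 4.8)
The plan is to obtain Proposition~\ref{gaussProp} by assembling the results already established in this subsection. Analyticity of $y_1$ in all four variables will come from Lemma~\ref{lemmaAnaHyper} together with the analytic-continuation argument that follows Lemma~\ref{hyperRecurLemma}: that discussion shows ${}_2\mathbf{F}_1(a,b,c;z)=\Gamma(c)^{-1}\,{}_2F_1(a,b,c;z)$, see Eq.~\eqref{hyperTheta}, extends to a function analytic in each of $a,b,c$ and $z$ on $\mathbb{C}$ (with the cut $\left|\arg(1-z)\right|<\pi$ in $z$), which moreover solves Eq.~\eqref{hyperEq}; multiplying by $\Gamma(c)$, analytic and non-vanishing exactly when $-c\notin\mathbb{N}_0$, transfers both properties to $y_1$. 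The solution property also follows more directly from Lemma~\ref{partSolGauss}: there $y_1$ is shown to solve Eq.~\eqref{hyperEq} on $(0,1)$ under a restriction such as $\Re(c)>\Re(a)>2$, and since the polynomial-coefficient differential operator of Eq.~\eqref{hyperEq} applied to $y_1$ is analytic in $(a,b,c,z)$ wherever $y_1$ is and vanishes on that open set, the identity theorem forces it to vanish identically. I would also record that ${}_2F_1(a,b,c;0)=1$ — true for $\Re(c)>\Re(a)>0$ by Eq.~\eqref{hypergeometricFunction2} and the beta integral — persists for all $c$ with $-c\notin\mathbb{N}_0$ by analytic continuation.

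For $y_2$, first observe that $g(z)\coloneqq{}_2F_1(a-c+1,b-c+1,2-c;z)$ is, by the previous paragraph with $(a,b,c)$ replaced by $(a-c+1,b-c+1,2-c)$, analytic in each variable with $g(0)=1$, provided $-(2-c)\notin\mathbb{N}_0$, i.e.\ $c\notin\{2,3,\dots\}$. Lemma~\ref{partSolsGauss}, applied to the solution $y_1={}_2F_1(a,b,c;\,\cdot\,)$ of Eq.~\eqref{hyperEq} established above, then gives that $w_1(z)=z^{1-c}g(z)=y_2(z)$ solves Eq.~\eqref{hyperEq}; the qualifier ``formally'' there disappears since $y_2$ is now a bona fide analytic function — on the interval $I$, or on the cut plane $\mathbb{C}\setminus((-\infty,0]\cup[1,\infty))$ for the principal branch of $z^{1-c}$. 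All of this holds in particular when $c\notin\mathbb{Z}$, which I assume henceforth.

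For linear independence, note that $y_1,y_2$ both solve the second-order linear equation~\eqref{hyperEq} and $y_1\not\equiv0$ since $y_1(0)=1$; hence it suffices to show the Wronskian $W[y_1,y_2]=y_1y_2'-y_2y_1'$ is not identically zero. Writing Eq.~\eqref{hyperEq} as $y''+P(z)y'+Q(z)y=0$ with $P(z)=\dfrac{c-(a+b+1)z}{z(1-z)}=\dfrac{c}{z}+\dfrac{c-a-b-1}{1-z}$, Abel's identity gives $W[y_1,y_2](z)=C\,z^{-c}(1-z)^{c-a-b-1}$ for a constant $C$. I would pin down $C$ from the behaviour as $z\to0$: using $y_1(z)=1+O(z)$, $y_1'(z)=O(1)$ and — since $g$ is analytic with $g(0)=1$ — $y_2(z)=z^{1-c}(1+O(z))$, $y_2'(z)=(1-c)z^{-c}(1+O(z))$, one gets $z^{c}W[y_1,y_2](z)=(1-c)+O(z)$, whereas the Abel form gives $z^{c}W[y_1,y_2](z)=C(1-z)^{c-a-b-1}\to C$; hence $C=1-c$. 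Since $c\notin\mathbb{Z}$ we have $c\neq1$, so $C\neq0$, $W[y_1,y_2]\not\equiv0$, and $y_1,y_2$ are linearly independent.

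The genuinely non-routine step is this last one: the nonvanishing of the Wronskian must be extracted from the singular behaviour of the factor $z^{1-c}$ at $z=0$ when $c\notin\mathbb{Z}$ — the branch point that $y_1$ does not have — which is precisely why the normalisation ${}_2F_1(\,\cdot\,;0)=1$ and the exact leading coefficient $1-c$ have to be tracked; everything else is bookkeeping with the analytic-continuation machinery already in place. As an alternative to the Wronskian one may argue directly: if $y_2=\kappa y_1$ then $z^{1-c}g(z)=\kappa\,y_1(z)$ would force $\lim_{z\to0^+}z^{1-c}$ to exist and be finite, which fails for every $c\notin\mathbb{Z}$ — the limit is $\infty$ if $\Re(c)>1$, it oscillates if $\Re(c)=1$, and it is $0$ if $\Re(c)<1$, in which case $y_2\equiv0$, contradicting $y_2(z)=z^{1-c}(1+O(z))$.
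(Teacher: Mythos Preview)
Your proposal is correct and follows the same route as the paper: the proof there reads simply ``It collects the result obtained throughout this epigraph,'' i.e.\ analyticity from Lemma~\ref{lemmaAnaHyper} plus the recursion-based continuation after Lemma~\ref{hyperRecurLemma}, the solution property from Lemmas~\ref{partSolGauss} and~\ref{partSolsGauss}, and linear independence from the behaviour at $z=0$. Your write-up is in fact more complete on the last point: the paper only sketches the case $\Re(1-c)>0$ (writing ``for instance''), whereas your Abel--Wronskian computation $W[y_1,y_2](z)=(1-c)\,z^{-c}(1-z)^{c-a-b-1}$ handles all $c\notin\mathbb{Z}$ in one stroke.
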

\begin{proof}
    It collects the result obtained throughout this epigraph.
\end{proof}
When $c \in \mathbb{Z}$, either $y_1(z)$ or $y_2(z)$ in Eq. \eqref{solHyper} becomes indeterminate and the problem of finding linearly independent solutions persists. That issue is addressed in \cite[\S21.4, pp. 277--281]{Nikiforov1988SpecialFO} and reader is referred there for a complete discussion. Here, we simply recollect the linearly independent solutions of Eq. \eqref{hyperEq} for any possible values of the parameters in Table \ref{linIndHyper}. There, the so-called Pochhammer symbols is defined as \begin{equation}
\left(a\right)_k\coloneqq a(a+1)\dots(a+k-1),\quad \left(a\right)_0\coloneqq 1,
    \label{psymbol}
\end{equation} whereas
\begin{equation}
a'\coloneqq a-c+1,\quad b'\coloneqq b-c+1, \quad c'\coloneqq 2-c.
\label{primeParameters}
\end{equation}
Finally, ${}_2\Phi_1(a,b,c;z)$ is defined as
\begin{equation}
\begin{split}
    \hypergeometricsetup{
  symbol=\Phi,
}\pFq[skip=2]{2}{1}{a,b}{c}{z} \coloneqq & \hypergeometricsetup{
  symbol=\dfrac{\partial\, {}_2F_1}{\partial c},
}\pFq[skip=2]{}{}{a,b}{c}{z}\\
-&\dfrac{\Gamma(a')\Gamma(b')\Gamma(c)}{\Gamma(a)\Gamma(b)}\dfrac{\partial}{\partial c}\left(\dfrac{z^{1-c}}{\Gamma(c')}\hypergeometricsetup{
  symbol=F,
}\pFq[skip=2]{2}{1}{a',b'}{c'}{z}\right).
\end{split}
\label{hyperFunc3}
\end{equation}

\begin{table}[htb!]
\centering
\begin{tabular}{c c c c}
\toprule
$c$ & $a, b$ & $y_1(z)$ & $y_2(z)$ \\ [0.5ex]  \midrule 
$c \notin \mathbb{Z}$ & $a, b$ arbitrary & ${}_2F_1(a,b,c;z)$ & $z^{1-c}{}_2F_1(a',b',c';z)$\\ [0.5ex]  \midrule
\multirow{2}{*}{$c-1=m\in\mathbb{N}_0$} & $\left(a'\right)_m\left(b'\right)_m=0$ & ${}_2F_1(a,b,c;z)$ & $z^{1-c}{}_2F_1(a',b',c';z)$ \\ 
 & $\left(a'\right)_m\left(b'\right)_m\neq0$ & ${}_2F_1(a,b,c;z)$ & ${}_2\Phi_1(a,b,c;z)$ \\ [0.5ex] \midrule
\multirow{2}{*}{$c'-1=m \in \mathbb{N}$} & $\left(a\right)_m\left(b\right)_m=0$ & ${}_2F_1(a,b,c;z)$ & $z^{1-c}{}_2F_1(a',b',c';z)$ \\ 
 & $\left(a\right)_m\left(b\right)_m\neq0$ & $z^{1-c}{}_2\Phi_1(a',b',c';z)$ & $z^{1-c}{}_2F_1(a',b',c';z)$ \\ [0.5ex]
 \bottomrule
\end{tabular}
\caption{Linearly independent solutions, $y_1(z)$ and $y_2(z)$, of Eq. \eqref{hyperEq} for different parameter values. Function ${}_2F_1(a,b,c;z)$ is given by Eq. \eqref{hypergeometricFunction2} and ${}_2\Phi_1(a,b,c;z)$ by Eq. \eqref{hyperFunc3}. Symbol $(a)_m$ is defined in Eq. \eqref{psymbol} and $a',b',c'$ in Eq. \eqref{primeParameters}.}
\label{linIndHyper}
\end{table}

We offer a few remarks. It is possible to construct linearly independent solutions using those obtained particularly in Lemma \ref{partSolGauss}. However, it is more convenient, in this context, to use ${}_2\Phi_1(a,b,c;z)$. Notice, finally, the symmetry in $(a,b,c)$ and $(a',b',c')$: whenever $c-1=m\in \mathbb{N}_0$, $c(c')$ is undefined and ${}_2F_1(a',b',c';z)$ may not exist. In certain occasions, namely, whenever $\left(a'\right)_m\left(b'\right)_m=0$, this indeterminacy is conveniently solved. However, whenever $\left(a'\right)_m\left(b'\right)_m \neq 0$, the indeterminacy must be removed. In that process,  ${}_2\Phi_1(a,b,c;z)$ appears. Whenever $c'-1=m\in \mathbb{N}$, the roles are reversed.
\subsubsection{Confluent hypergeometric equation\label{confluent}}
Whenever $\phi(x) \in \mathcal{P}_1,$ Eq. \eqref{HDE} can be expressed as a confluent hypergeometric equation:
\begin{definition}
Let $I \subset \mathbb{R}$ be an open interval not necessarily bounded. A confluent hypergeometric equation or CHE is a differential equation of the form
\begin{equation} 
x\dfrac{\mathrm{d}^2y}{\mathrm{d}x^2}(x)+\left(c-x\right)\dfrac{\mathrm{d}y}{\mathrm{d}x}(x)-a y(x)=0, \quad x \in I,
\label{confluentHyperEq}
\end{equation}
where $a, c \in \mathbb{C}$ are arbitrary. In addition, $\omega(x)$ and $\nu \in \mathbb{C}$ for its associated Eqs. \eqref{pearson} and \eqref{lambdanu} are given by
\begin{equation}
        \omega(x)=x^{c-1}e^{-x}, \quad \nu = -a,
        \label{confluentPar}
\end{equation}
respectively.
\end{definition}
Again, we obtain particular solutions by means of Lemma \ref{lemmaParticularSolutions}:
\begin{lemma}
\label{partSolConfluent}
Consider Eq. \eqref{confluentHyperEq} for $I=(0,\infty)$. Then, the following two contours $u_i:J \to \mathbb{C}$ for $i=1,2$ satisfy Condition \ref{firstCond} associated to such CHE under the parametric restrictions specified:
\begin{align*}
    & u_1(t)=xt, & \Re(c)>\Re(a)>2, & & J = [0,1], \\
    & u_2(t)=x(1+t), & \Re(a)>2, & & J =  [0,\infty).
\end{align*}
Their associated particular solutions, $y_i(z)$, are written
\begin{align*}
&y_1(x) = \pFq[skip=0]{1}{1}{a}{c}{x},\\
&y_2(x) = \hypergeometricsetup{symbol=G,}\pFq[skip=4]{1}{1}{a}{c}{x},
\end{align*}
where\footnote{Again, we also write ${}_1F_1(a,c;x)$ or ${}_1G_1(a,c;x)$ whenever needed, accordingly.}
\begin{align}
    &\hypergeometricsetup{symbol=F}\pFq[skip=4]{1}{1}{a}{c}{x}=\frac{\Gamma(c)}{\Gamma(a)\Gamma(c-a)}e^x \int^{1}_{0}\dfrac{t^{c-a-1}\left(1-t\right)^{a-1}}{e^{xt}}\mathrm{d}t, \quad \Re(c)>\Re(a),
    \label{confluentHyper} \\
    &\hypergeometricsetup{symbol=G,}\pFq[skip=4]{1}{1}{a}{c}{x}=\frac{1}{\Gamma(a)} \int^{\infty}_{0}\dfrac{t^{a-1}\left(1+t\right)^{c-a-1}}{e^{xt}}\mathrm{d}t,
    \label{confluentHyper2}
\end{align}
whenever $\,0 < x < \infty$ and $\Re(a)>2$. They are the so-called confluent hypergeometric function of first and second kind, respectively.
\end{lemma}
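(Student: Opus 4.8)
The plan is to apply Lemma~\ref{lemmaParticularSolutions} to Eq.~\eqref{confluentHyperEq} regarded as a GHE, with the reading $\phi(x)=x$, $\widetilde{\psi}(x)=c-x$, $\widetilde{\phi}(x)=-ax$, for which $\omega(x)=x^{c-1}e^{-x}$ solves the Pearson Eq.~\eqref{pearson} and $\nu=-a$ solves Eq.~\eqref{lambdanu} (these are the data recorded in Eq.~\eqref{confluentPar}, and a one-line check confirms them). Since $\nu=-a$, the integrand of $v(x)$ in Eq.~\eqref{uPart} is $\phi^{\nu}(u)\omega(u)(u-x)^{-\nu-1}=u^{c-a-1}e^{-u}(u-x)^{a-1}$, and the boundary quantity of Condition~\ref{firstCond} reads
$$
p(u)\coloneqq\frac{\phi^{\nu+1}(u)\omega(u)}{(u-x)^{\nu+2}}=u^{c-a}e^{-u}(u-x)^{a-2},
$$
the confluent analogue of the expression appearing in the proof of Lemma~\ref{partSolGauss}, with $e^{-u}$ in place of the factor $(1-u)^{b-c+1}$.

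First I would check that $p(u)$ vanishes at the endpoints of each contour. For $u_1(t)=xt$, $t\in[0,1]$, the endpoints are $u=0$ and $u=x$: the factor $u^{c-a}$ annihilates the endpoint $u=0$ as soon as $\Re(c-a)>0$, while $(u-x)^{a-2}$ annihilates the endpoint $u=x$ as soon as $\Re(a)>2$; together these give $\Re(c)>\Re(a)>2$. For $u_2(t)=x(1+t)$, $t\in[0,\infty)$, the endpoints are $u=x$ (again controlled by $(u-x)^{a-2}$ under $\Re(a)>2$) and $u\to\infty$, where the exponential $e^{-u}$ forces $p(u)\to0$ with no further constraint, so only $\Re(a)>2$ is needed. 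These are precisely the stated restrictions.

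Next I would dispose of Condition~\ref{secondCond}, i.e. the interchange of differentiation in $x$ with integration in $u$, using Theorem~\ref{theoA.1}. The substitutions $u=xt$ and $u=x(1+t)$ turn the integrands, up to multiplicative constants depending on the parameters, into
$$
f_1(t,x)=t^{c-a-1}(1-t)^{a-1}e^{-xt},\qquad f_2(t,x)=t^{a-1}(1+t)^{c-a-1}e^{-xt}.
$$
For $f_1$ the situation is as in Lemma~\ref{partSolGauss}: $f_1$ is continuous on $(0,1)\times(0,\infty)$, holomorphic in $x$ for each fixed $t$, the $t$-interval is compact and the endpoint singularities at $t=0,1$ are integrable because $\Re(c)>\Re(a)>2$, so Theorem~\ref{theoA.1} applies directly. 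For $f_2$ the $t$-range is unbounded, so one must in addition exhibit a majorant locally uniform in $x$: for $x$ in a compact subset $[\delta,M]\subset(0,\infty)$ one has $|f_2(t,x)|\le t^{\Re(a)-1}(1+t)^{\Re(c-a)-1}e^{-\delta t}\in L^1(0,\infty)$, which legitimizes differentiating under the integral there. This uniform-integrability estimate over the semi-infinite contour $u_2$, together with the bookkeeping of the branches of $u^{c-a-1}$, $(u-x)^{a-1}$ and $(1+t)^{c-a-1}$ along the two contours, is the only genuinely delicate point; everything else is routine.

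Finally, with both Conditions~\ref{firstCond} and \ref{secondCond} verified, Lemma~\ref{lemmaParticularSolutions} gives that $y_i(x)=C_\nu\,v_i(x)/\omega(x)$ solves Eq.~\eqref{confluentHyperEq}. Carrying out the substitutions explicitly yields, up to a constant, $v_1(x)=x^{c-1}\int_0^1 t^{c-a-1}(1-t)^{a-1}e^{-xt}\,\mathrm{d}t$ and $v_2(x)=x^{c-1}e^{-x}\int_0^\infty t^{a-1}(1+t)^{c-a-1}e^{-xt}\,\mathrm{d}t$; dividing by $\omega(x)=x^{c-1}e^{-x}$ cancels the algebraic and exponential prefactors and leaves exactly the integrals in Eqs.~\eqref{confluentHyper} and \eqref{confluentHyper2}. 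I would fix $C_\nu=\Gamma(c)/(\Gamma(a)\Gamma(c-a))$ for $y_1$, so that $y_1|_{x=0}={}_1F_1(a,c;0)=1$ via the Beta integral $\int_0^1 t^{c-a-1}(1-t)^{a-1}\,\mathrm{d}t=\Gamma(c-a)\Gamma(a)/\Gamma(c)$, and $C_\nu=1/\Gamma(a)$ for $y_2$, the conventional normalization; this identifies $y_1={}_1F_1(a,c;x)$ and $y_2={}_1G_1(a,c;x)$, completing the proof.
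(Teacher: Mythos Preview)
Your proof is correct and follows essentially the same route as the paper's: identify the boundary quantity $p(u)=u^{c-a}e^{-u}(u-x)^{a-2}$, check it vanishes at the endpoints of each contour under the stated parametric restrictions, invoke Theorem~\ref{theoA.1} for the differentiation under the integral, and then normalize. You are actually more thorough than the paper in two places---you supply an explicit uniform majorant for the semi-infinite contour $u_2$ (the paper merely says ``Solution $G(a,c;z)$ is obtained similarly'') and you carry out the substitutions explicitly---and the only discrepancy is cosmetic: the paper pins down $C_\nu$ for $y_2$ via $\lim_{x\to\infty}x^{a}\,{}_1G_1(a,c;x)=1$ rather than calling it the conventional normalization.
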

\begin{proof}
For Eq. \eqref{confluentHyperEq}, Condition \ref{firstCond} translates to
$$
p(u_2)-p(u_1) \coloneqq s^{c-a}e^{-u}\left(u-x\right)^{a-2}\Bigr|^{u_2}_{u_1}=0,
$$
where $\omega(x)$ and $\nu \in \mathbb{C}$ are given by Eq. \eqref{confluentPar}. Under such restrictions, both $p(u_1)$ and $p(u_2)$ vanish for the two contours. Define
$$
f(t,x) = t^{c-a-1}\left(1-t\right)^{a-1}e^{-xt}, \quad (t,x) \in (0,1) \times (0,\infty) \coloneqq \Omega.
$$
Hence, $f(t,x) \in C(\Omega)$ and $f(t_0,x) \in \mathcal{H}(0,\infty)$ for every $t_0 \in (0,1)$ and Theorem \ref{theoA.1} is employed for Condition \ref{secondCond}.

Accordingly to Lemma \ref{particularSolutionTheo}, ${}_1F_1(a,c;x)$ given by Eq. \eqref{confluentHyper} constitutes a particular solution of Eq. \eqref{confluentHyperEq} for $I=(0,\infty)$ whenever $\Re(c)>\Re(a)>2$. Solution $G(a,c;z)$ is obtained similarly. 

Constants are chosen so that ${}_1F_1(a,c;0)=1$ and $\lim_{x \to \infty} x^{a}{}_1G_1(a,c;x)=1$.
\end{proof}

Now, by means of the NU method, we increase the number of particular solutions obtained in previous Lemma \ref{partSolConfluent}:
\begin{lemma}
\label{lemmaConfluentInc}
Assume $w(x) = f(a,c;x)$ is a solution of Eq. \eqref{confluentHyperEq}. Then,
\begin{align*}
&w_1(x) = x^{1-c}f(a-c+1,2-c;x),\\
&w_2(x) = e^{x}f(c-a,c;-x),
\end{align*}
solve Eq. \eqref{confluentHyperEq}.
\end{lemma}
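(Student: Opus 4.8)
The plan is to mirror Lemma~\ref{partSolsGauss} and run the NU reduction of Section~\ref{reduction} on Eq.~\eqref{confluentHyperEq}, viewed as a GHE~\eqref{GHE} with the identification
$$\phi(x)=x,\qquad \widetilde{\psi}(x)=c-x,\qquad \widetilde{\phi}(x)=-a\,\phi(x)=-ax.$$
First I would form $P_2(x;k)$ from Eq.~\eqref{P2}; it becomes a quadratic in $x$ whose discriminant $\Delta P_2(k)$ vanishes for exactly two values of $k_0$. One of them, $k_0=-a$ (the value for which $\widetilde{\phi}=\lambda\phi$ already holds), only reproduces the identity transformation $\chi\equiv 1$ and the composition $\chi(x)=x^{1-c}e^{x}$ of the two transformations below, so it is redundant; hence I take $k_0=c-a-1$. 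For that choice $P_2(x;k_0)$ is a perfect square, $\tfrac14(x-1+c)^2$, and the two sign choices in Eq.~\eqref{pichoice} give
$$\pi_1(x)=1-c,\qquad \pi_2(x)=x,$$
with the corresponding factors $\chi_1(x)=x^{1-c}$ and $\chi_2(x)=e^{x}$ read off from Eq.~\eqref{phi}.

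Next I would compute $\psi$ and $\lambda$ from Eqs.~\eqref{tau,sigmabar} and~\eqref{lambda} in each case. Case (A), with $\pi_1(x)=1-c$, yields $\psi_1(x)=2-c-x$ and $\lambda_1=c-a-1$, which is precisely the CHE~\eqref{confluentHyperEq} with parameters $a_1=a-c+1$ and $c_1=2-c$; since solutions of the original GHE are of the form $\chi_1(x)y_1(x)$, this produces $w_1(x)=x^{1-c}f(a-c+1,2-c;x)$. Case (B), with $\pi_2(x)=x$, yields $\psi_2(x)=c+x$ and $\lambda_2=c-a$, i.e. the equation $xy''+(c+x)y'+(c-a)y=0$. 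This is not yet of the canonical form~\eqref{confluentHyperEq} because of the sign of the linear term, so the extra step is the linear change of variable $x\mapsto -x$: writing $y(x)=g(-x)$ turns it into $t\,g''(t)+(c-t)g'(t)-(c-a)g(t)=0$, which is the CHE with parameters $\tilde a=c-a$, $\tilde c=c$. Hence $g(t)=f(c-a,c;t)$ is a solution and $w_2(x)=\chi_2(x)g(-x)=e^{x}f(c-a,c;-x)$ solves~\eqref{confluentHyperEq}.

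The only genuine subtlety -- the part I expect to be the main obstacle -- is case (B): unlike the Gauss equation, the CHE has no $a\leftrightarrow b$-type internal symmetry to fall back on, and the NU reduction lands on an equation with the ``wrong'' sign in front of the linear term; one must observe that the single reflection $x\mapsto -x$ restores the canonical shape while interacting harmlessly with the prefactor $e^{x}$, producing exactly $e^{x}f(c-a,c;-x)$. Everything else is routine bookkeeping: the discriminant computation, the two integrations of Eq.~\eqref{phi}, and matching coefficients of $\psi_1$ and $\lambda_1$ to identify $(a_1,c_1)$. As in Lemma~\ref{partSolsGauss}, ``solve'' is understood formally, i.e. under the standing hypothesis that $f(a,c;\cdot)$ is a solution of~\eqref{confluentHyperEq}; no convergence or analyticity considerations enter this lemma.
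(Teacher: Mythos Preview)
Your proposal is correct and follows essentially the same route as the paper's proof: identify the CHE as a GHE with $\phi(x)=x$, $\widetilde{\psi}(x)=c-x$, $\widetilde{\phi}(x)=-ax$, run the NU reduction to obtain the two choices $\pi_1(x)=1-c$, $\pi_2(x)=x$ with $\chi_1(x)=x^{1-c}$, $\chi_2(x)=e^{x}$, and in case (B) apply the reflection $x\mapsto -x$ to bring the resulting equation back to canonical CHE form with parameters $(c-a,c)$. The paper merely states the two $\pi$'s directly without writing out $P_2(x;k)$ or the discriminant, but your additional detail is consistent and the identification of $(a_1,c_1)=(a-c+1,2-c)$ and the handling of the sign flip match exactly.
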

\begin{proof}
Transform Eq. \eqref{confluentHyperEq} into another HDE by following the NU reduction discussed in Section \ref{reduction}. Eq. \eqref{confluentHyperEq} is a GHE with polynomials $$\phi(x)=x, \quad \widetilde{\psi}(x)=c-x,\quad \widetilde{\phi}(x)=-a x,$$
compare with Eq. \eqref{GHE}. Firstly, find the following two possibilities for $\pi(x)$ and $\chi(x)$ given by Eqs. \eqref{piEq} and \eqref{phi} 
\begin{enumerate}[]
    \item[(A)] $\pi_1(x)=1-c, \quad \chi_1(x)=x^{1-c},$
    \item[(B)] $\pi_2(x)=x, \quad \quad \;\;\, \chi_2(x)=e^x.$
\end{enumerate}
Following Eqs. \eqref{tau,sigmabar} and \eqref{lambda}, write
\begin{enumerate}[]
    \item[(A)] $\psi_1(x)=(2-c-x), \quad \lambda_1=c-a-1,$
    \item[(B)] $\psi_2(x)=(x+c), \quad \quad \;\;\, \lambda_2=c-a.$
\end{enumerate}
Clearly, HDE of case (A) can be compared directly to Eq. \eqref{confluentHyperEq}. Thence, $v_1(x) = f(a-c+1,2-c;x)$ solves it and $w_1(x)=x^{1-c}v_1(x)$ the original CHE. As for the second HDE, set $\overline{x}=-x$ and rewrite
\begin{enumerate}[]
    \item[(B)] $\psi_2(\overline{x})=(c-\overline{x}), \quad \lambda_2=a-c.$
\end{enumerate}
Thence, $v_2(\overline{x})=f(c-a,c;\overline{x})$ solves it and $w_2(x) = e^{x}v_2(\overline{x})$ the original CHE. Hence, the result. 
\end{proof}
Now, we continue analytically both solutions ${}_1F_1(a,c;x)$ and ${}_1G_1(a,c;x)$ given by Eqs. \eqref{confluentHyper} and \eqref{confluentHyper2}:
\begin{lemma}
\label{anaConfluentLemma}
    Confluent hypergeometric function ${}_1F_1(a,c;z)$ defined by Eq. \eqref{confluentHyper} is analytic in each variable for $\Re(c)>\Re(a)>0$ and all $z \in \mathbb{C}$. 
    
    As for ${}_1G_1(a,c;z)$ defined by Eq. \eqref{confluentHyper2}, it is analytic for $\left|\text{arg}\,z\right|<3\pi/2,\, z \neq 0$ and $\Re(a)>0$.
\end{lemma}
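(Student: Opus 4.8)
The strategy is to invoke Theorem \ref{theoA.1} (the result on interchanging differentiation and integration / analyticity of parameter integrals) applied to the explicit integral representations in Eqs. \eqref{confluentHyper} and \eqref{confluentHyper2}, exactly as was done for the Gauss hypergeometric function in Lemma \ref{lemmaAnaHyper}. For ${}_1F_1(a,c;z)$, I would write the integrand as $g(t; a, c, z) = t^{c-a-1}(1-t)^{a-1}e^{-zt}$ on $t \in (0,1)$ and show that on compact parameter regions of the form $\delta \le \Re(c-a) \le N$, $\delta \le \Re(a) \le N$, $|z| \le N$ (with $\delta, N > 0$) one has the uniform bound $|g(t;a,c,z)| \le C\, t^{\delta-1}(1-t)^{\delta-1}$, whose integral over $(0,1)$ is finite (a Beta integral). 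Since $e^{-zt}$ is entire in $z$ for fixed $t \in (0,1)$, there is no singularity to excise — unlike the $(1-zt)^{-b}$ factor in the Gauss case — so no cut in the $z$-plane is needed. Letting $\delta \to 0$, $N \to \infty$ gives analyticity in each variable for $\Re(c) > \Re(a) > 0$ and all $z \in \mathbb{C}$. The prefactor $\Gamma(c)/\big(\Gamma(a)\Gamma(c-a)\big)$ is analytic there since $\Re(a) > 0$ and $\Re(c-a) > 0$ keep us away from the poles of the Gamma functions in the denominator.

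For ${}_1G_1(a,c;z)$, the integrand is $h(t; a, c, z) = t^{a-1}(1+t)^{c-a-1}e^{-zt}$ on the non-compact interval $t \in (0,\infty)$, so convergence must be checked at both endpoints. Near $t = 0$ the factor $t^{a-1}$ is integrable provided $\Re(a) > 0$, and $(1+t)^{c-a-1}$ and $e^{-zt}$ are bounded there. Near $t = \infty$ the decay of $e^{-zt}$ controls the polynomial growth $(1+t)^{c-a-1}$ precisely when $\Re(z) > 0$; to extend this to the sector $|\arg z| < 3\pi/2$ one rotates the contour of integration. The key point is that the integral $\int_0^{\infty \cdot e^{i\theta}} h\, dt$ along a ray $\arg t = \theta$ converges whenever $\Re(z e^{i\theta}) > 0$, and by Cauchy's theorem (the integrand being holomorphic in $t$ off the cut) the value is independent of the ray chosen; ranging $\theta$ over $(-\pi/2, \pi/2)$ covers $|\arg z| < \pi$, and a further careful deformation extends this to $|\arg z| < 3\pi/2$, which is the half-plane reached by continuation around the branch point at $z = 0$ coming from the normalization $z^a\,{}_1G_1(a,c;z) \to 1$. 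On each such region the uniform bound needed for Theorem \ref{theoA.1} is obtained by dominating $|h|$ by $C\, t^{\delta - 1}$ near $0$ and by $C\, t^{N} e^{-\eta t}$ near $\infty$ for some $\eta > 0$, both integrable; $\Gamma(a)^{-1}$ is entire, so no further parameter restrictions arise there beyond $\Re(a) > 0$.

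The main obstacle is the careful handling of the sector $|\arg z| < 3\pi/2$ for ${}_1G_1$: one must justify the contour rotation, check that the rotated integrals agree with the original one where both converge (so that they genuinely define a single analytic continuation), and confirm that the resulting function is single-valued on the stated sector while picking up the expected branching at $z = 0$. Everything else — the endpoint convergence analysis, the dominated-convergence-type uniform estimates, and the appeal to Theorem \ref{theoA.1} — is routine and parallels the proof of Lemma \ref{lemmaAnaHyper}. I would therefore present the ${}_1F_1$ part briefly, referencing Lemma \ref{lemmaAnaHyper} for the mechanics, and spend the bulk of the argument on the contour-rotation step for ${}_1G_1$, or else simply cite the detailed treatment in \cite[\S21.1]{Nikiforov1988SpecialFO} for that part.
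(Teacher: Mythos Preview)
Your proposal is correct and follows essentially the same approach as the paper: for ${}_1F_1$ you give exactly the same uniform bound on compact parameter regions and invoke Theorem~\ref{theoA.1}, matching the paper almost verbatim. For ${}_1G_1$ the paper is even more terse than you are --- it simply observes that the integral is a Laplace integral and cites \cite[Appendix~B, Theorem~1, p.~383]{Nikiforov1988SpecialFO} rather than \S21.1, so your sketch of the contour-rotation argument actually supplies more detail than the paper's proof does.
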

\begin{proof}
    Again, find the region in which the integral
    $$I(z,a,c) \coloneqq \int^{1}_{0} t^{c-a-1}\left(1-t\right)^{a-1}e^{-zt}\mathrm{d}t,$$
    converges uniformly in $z$, $a$ and $c$. This occurs in the regions $$\Re(z) \ge -N,\quad  \delta \le \Re(a) \le N, \quad \delta \le \Re(c-a) \le N,$$ for $\delta,\,N > 0$ because, whenever $0 \le t \le 1$, $$\left|t^{c-a-1}\left(1-t\right)^{a-1}e^{-zt}\right| < t^{\delta-1}\left(1-t\right)^{\delta-1}e^{Nt},$$ and its integral converges. Hence, the first part follows as $N$ and $\delta$ are arbitrary.
    
    The integral defining ${}_1G_1(a,c;x)$ is a Laplace integral. From the discussion in \cite[Appendix B, Theorem 1, p.383]{Nikiforov1988SpecialFO}, the result follows.
\end{proof}
To make ${}_1G_1(a,c;z)$ single-valued it is enough to introduce a cut along the real axis for $z<0$ and assume $-\pi < \text{arg}\,z < \pi.$ Next, we obtain differential and recursion relations that, together with previous Lemma \ref{anaConfluentLemma}, will allow us to continue further both confluent hypergeometric functions:
\begin{lemma}
\label{confluentRecurLemma}
    The following relations hold:
    \begin{align}
        &\hypergeometricsetup{symbol=\dfrac{\partial\,{}_1F_1}{\partial z}}\pFq[skip=4]{}{}{a}{c}{z} = \dfrac{a}{c}\hypergeometricsetup{symbol=F}\pFq[skip=4]{1}{1}{a+1}{c+1}{x}, \label{diffcFnu} \\
        &\hypergeometricsetup{symbol=\mathbf{F}}\pFq[skip=4]{1}{1}{a}{c}{z}=(a+1)z\,\hypergeometricsetup{symbol=\mathbf{F}}\pFq[skip=4]{1}{1}{a+2}{c+2}{z}+(c-z)\hypergeometricsetup{symbol=\mathbf{F}}\pFq[skip=4]{1}{1}{a+1}{c+1}{z}, \label{recurcFnu} \\
        &\hypergeometricsetup{symbol=\mathbf{F}}\pFq[skip=4]{1}{1}{a}{c}{z}=(a-c-1)z\,\hypergeometricsetup{symbol=\mathbf{F}}\pFq[skip=4]{1}{1}{a}{c+2}{z}+(c+z)\hypergeometricsetup{symbol=\mathbf{F}}\pFq[skip=4]{1}{1}{a}{c+1}{z}, \label{recurcFnu2} \\
        &\hypergeometricsetup{symbol=\dfrac{\partial\, {}_1G_1}{\partial z}}\pFq[skip=4]{}{}{a}{c}{z} = -a \hypergeometricsetup{symbol=G}\pFq[skip=4]{1}{1}{a+1}{c+1}{z}, \label{diffcF2nu} \\
        &\hypergeometricsetup{symbol=G}\pFq[skip=4]{1}{1}{a}{c}{z}=(a+1)z\,\hypergeometricsetup{symbol=G}\pFq[skip=4]{1}{1}{a+2}{c+2}{z}-(c-z)\hypergeometricsetup{symbol=G}\pFq[skip=4]{1}{1}{a+1}{c+1}{z}, \label{recurcF2nu}
    \end{align}
    where ${}_1F_1(a,c;z)$ and ${}_1G_1(a,c;z)$ are given by Eqs. \eqref{confluentHyper} and \eqref{confluentHyper2}; and ${}_1\mathbf{F}_1(a,c;z)$ is defined by 
    \begin{equation}
\hypergeometricsetup{symbol=\mathbf{F}}\pFq[skip=4]{1}{1}{a}{c}{z}\coloneqq\dfrac{1}{\Gamma(c)}\hypergeometricsetup{symbol=F}\pFq[skip=4]{1}{1}{a}{c}{z}.
    \end{equation}
\end{lemma}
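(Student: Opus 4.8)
The plan is to follow the proof of the Gauss analogue, Lemma~\ref{hyperRecurLemma}, working throughout from the integral representations \eqref{confluentHyper} and \eqref{confluentHyper2} and from the fact (Lemma~\ref{partSolConfluent}) that both ${}_1F_1(a,c;z)$ and ${}_1G_1(a,c;z)$ solve the CHE~\eqref{confluentHyperEq}. First I would establish the differential relations \eqref{diffcFnu} and \eqref{diffcF2nu} by differentiating under the integral sign, which is legitimate by Theorem~\ref{theoA.1} in the regions of uniform convergence exhibited in Lemma~\ref{anaConfluentLemma}. Writing ${}_1F_1(a,c;z)=\frac{\Gamma(c)}{\Gamma(a)\Gamma(c-a)}e^{z}\int_0^1 t^{c-a-1}(1-t)^{a-1}e^{-zt}\,\mathrm dt$, the operator $\partial_z$ acting on $e^{z}e^{-zt}$ pulls down a factor $1-t$, turning the integrand into that of ${}_1F_1(a+1,c+1;z)$ up to the prefactor; using $\Gamma(a+1)=a\Gamma(a)$ and $\Gamma(c+1)=c\Gamma(c)$ to reconcile the prefactors yields the coefficient $a/c$ of \eqref{diffcFnu}. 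For ${}_1G_1$ the computation is the same with $(1+t)$ in place of $(1-t)$ and no $e^{z}$ prefactor: $\partial_z$ pulls down $-t$, and rescaling by $\Gamma(a+1)=a\Gamma(a)$ gives \eqref{diffcF2nu}.

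Next I would insert these derivatives into the CHE itself. For $y={}_1F_1(a,c;z)$ we have $y'=\tfrac{a}{c}\,{}_1F_1(a+1,c+1;z)$ and, applying \eqref{diffcFnu} once more, $y''=\tfrac{a(a+1)}{c(c+1)}\,{}_1F_1(a+2,c+2;z)$; substituting into $z\,y''+(c-z)\,y'-a\,y=0$, dividing by $a$, and replacing each ${}_1F_1(a+j,c+j;z)$ by $\Gamma(c+j)\,{}_1\mathbf{F}_1(a+j,c+j;z)$ with $\Gamma(c+1)=c\Gamma(c)$, $\Gamma(c+2)=c(c+1)\Gamma(c)$ makes the common factor $\Gamma(c)$ cancel, leaving \eqref{recurcFnu}. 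Exactly the same substitution with $y={}_1G_1(a,c;z)$, $y'=-a\,{}_1G_1(a+1,c+1;z)$, $y''=a(a+1)\,{}_1G_1(a+2,c+2;z)$ produces \eqref{recurcF2nu}; here no $\Gamma$-rescaling is needed, since ${}_1G_1$ carries no $\Gamma(c)$ factor. As in the Gauss case, these identities are first obtained where the integral representations and Lemma~\ref{anaConfluentLemma} guarantee analyticity and then hold on the whole domain of analyticity by the principle of analytic continuation.

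Since \eqref{recurcFnu2} shifts only $c$, I would derive it from \eqref{recurcFnu} by the Kummer transformation, the confluent counterpart of the move $a,b\mapsto c-a,c-b$ used for \eqref{recurhyper2}. The required identity is ${}_1\mathbf{F}_1(a,c;z)=e^{z}\,{}_1\mathbf{F}_1(c-a,c;-z)$: by Lemma~\ref{lemmaConfluentInc}, $e^{z}f(c-a,c;-z)$ solves \eqref{confluentHyperEq}, it equals $1$ at $z=0$, and for $c\notin-\mathbb{N}_0$ the solution analytic at the origin with value $1$ there is unique (indicial exponents $0$ and $1-c$), so $e^{z}{}_1F_1(c-a,c;-z)={}_1F_1(a,c;z)$, which extends by analytic continuation. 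Replacing $a\mapsto c-a$ and $z\mapsto -z$ in \eqref{recurcFnu} and applying this identity to each of the three ${}_1\mathbf{F}_1$'s, the factors $e^{-z}$ cancel and, with $-(c-a+1)=a-c-1$, one recovers \eqref{recurcFnu2}.

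\textbf{Main obstacle.} The differentiation under the integral is routine; the care-demanding points are (i) tracking the $\Gamma$-normalizations when passing from ${}_1F_1$ to ${}_1\mathbf{F}_1$, which is precisely why \eqref{recurcFnu} and \eqref{recurcFnu2} are stated for the ``bold'' function and fail for ${}_1F_1$ itself (they would carry uncancelled $\Gamma(c),\Gamma(c+1),\Gamma(c+2)$, just as noted after Lemma~\ref{hyperRecurLemma}), and (ii) supplying the Kummer identity ${}_1\mathbf{F}_1(a,c;z)=e^{z}\,{}_1\mathbf{F}_1(c-a,c;-z)$, which the Gauss proof used silently through the $(1-z)^{c-a-b}$ relation; establishing it cleanly needs the uniqueness-of-analytic-solution argument sketched above together with analytic continuation in the parameters.
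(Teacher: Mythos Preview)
Your proposal is correct and follows essentially the same approach as the paper's own proof, which simply reads ``Follow similar arguments that those employed for hypergeometric function ${}_2F_1(a,b,c;x)$ in Lemma~\ref{hyperRecurLemma}.'' Your three-step strategy---differentiate under the integral via Theorem~\ref{theoA.1}, substitute the derivative relations into the CHE~\eqref{confluentHyperEq} to obtain the recursions, then apply the Kummer symmetry $a\mapsto c-a$, $z\mapsto -z$ for \eqref{recurcFnu2}---mirrors exactly the Gauss template the paper invokes. The one point where you go beyond the paper is in supplying an independent justification of the Kummer identity ${}_1\mathbf{F}_1(a,c;z)=e^{z}\,{}_1\mathbf{F}_1(c-a,c;-z)$ via Lemma~\ref{lemmaConfluentInc} and uniqueness at the regular singular point; the paper only states this identity later, in Proposition~\ref{conlfuentProp}, so your inclusion of it here is a welcome clarification rather than a deviation.
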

\begin{proof}
    Follow similar arguments that those employed for hypergeometric function ${}_2F_1(a,b,c;x)$ in Lemma \ref{hyperRecurLemma}.
\end{proof}

Putting together Lemmas \ref{anaConfluentLemma} and \ref{confluentRecurLemma}, we continue analytically both ${}_1\mathbf{F}_1(a,c;z)$ and ${}_1G_1(a,c;z)$ to $a$, $c$ and $z$ to $\mathbb{C}$. Subsequently, we extend ${}_1F_1(a, c;z)$ with the further restriction $-c \notin \mathbb{N}_0$. In addition, Eqs. \eqref{diffcFnu} and \eqref{diffcF2nu} yields that both ${}_1\mathbf{F}_1(a,c;z)$ and ${}_1G_1(a,c;z)$ solves Eq. \eqref{confluentHyperEq} without restrictions in the parameter. Same considerations apply to ${}_1F_1(a,c;z)$ whenever $-c \notin \mathbb{N}_0$.

Finally, we construct linearly independent solutions:
\begin{prop}
\label{conlfuentProp}
    The confluent hypergeometric functions
    \begin{equation}
        \begin{split}
&y_1(z)=\hypergeometricsetup{symbol=F}\pFq[skip=4]{1}{1}{a}{c}{z}, \\
            &y_2(z)=z^{1-c}\hypergeometricsetup{symbol=F}\pFq[skip=4]{1}{1}{a-c+1}{2-c}{z},
        \end{split}
        \label{solConfluent}
    \end{equation}
    with integral representation given by Eq. \eqref{confluentHyper}, are analytic in each variable and are linearly independent solutions of Eq. \eqref{confluentHyperEq} under the restriction $c \notin \mathbb{Z}$. In addition, the following functional equation is valid
    \begin{equation*}
        \hypergeometricsetup{symbol=F}\pFq[skip=4]{1}{1}{a}{c}{x}=e^{z}\hypergeometricsetup{symbol=F}\pFq[skip=4]{1}{1}{c-a}{c}{-z}.
    \end{equation*}
\end{prop}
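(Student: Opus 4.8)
The plan is to harvest all three claims from results already in place: analyticity from Lemma~\ref{anaConfluentLemma} together with the continuation supplied by the recursions of Lemma~\ref{confluentRecurLemma}; the fact that $y_1$ and $y_2$ solve Eq.~\eqref{confluentHyperEq} from Lemmas~\ref{partSolConfluent} and~\ref{lemmaConfluentInc} and the principle of analytic continuation; and linear independence together with Kummer's functional equation by elementary arguments at the regular singular point $z=0$. Concretely, by Lemma~\ref{anaConfluentLemma} the function ${}_1F_1(a,c;z)$, hence also ${}_1\mathbf F_1(a,c;z)={}_1F_1(a,c;z)/\Gamma(c)$, is analytic in each of $a$, $c$, $z$ on $\Re(c)>\Re(a)>0$; using Eq.~\eqref{recurcFnu2} to raise $\Re(c)$ and Eq.~\eqref{recurcFnu} to raise $\Re(a)$ continues ${}_1\mathbf F_1$ to all of $\mathbb{C}^3$, so that ${}_1F_1(a,c;z)=\Gamma(c)\,{}_1\mathbf F_1(a,c;z)$ is analytic wherever $-c\notin\mathbb{N}_0$. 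Applying the same statement with $(a,c)$ replaced by $(a-c+1,2-c)$ gives analyticity of $z^{1-c}{}_1F_1(a-c+1,2-c;z)$ whenever $-(2-c)\notin\mathbb{N}_0$. Under the hypothesis $c\notin\mathbb{Z}$ both exceptional sets are avoided, so $y_1$ and $y_2$ are analytic in each variable.

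Next, $y_1$ solves Eq.~\eqref{confluentHyperEq} for $\Re(c)>\Re(a)>2$ by Lemma~\ref{partSolConfluent}; since both sides of that equation depend analytically on the parameters on the domain just obtained, the principle of analytic continuation (used repeatedly above) extends the statement ``$y_1$ solves Eq.~\eqref{confluentHyperEq}'' to all admissible $(a,c)$. Then $y_2$ solves Eq.~\eqref{confluentHyperEq} by Lemma~\ref{lemmaConfluentInc} applied with $f={}_1F_1$, since its $w_1(x)=x^{1-c}f(a-c+1,2-c;x)$ is precisely $y_2$.

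For linear independence I would work near $z=0$, a regular singular point of Eq.~\eqref{confluentHyperEq} with indicial exponents $0$ and $1-c$. We have $y_1(0)={}_1F_1(a,c;0)=1$, while ${}_1F_1(\cdot,\cdot;0)=1$ gives $y_2(z)=z^{1-c}\bigl(1+O(z)\bigr)$. Since $c\notin\mathbb{Z}$, $z^{1-c}$ is non-constant, so the ratio $y_2/y_1=z^{1-c}\bigl(1+O(z)\bigr)$ is not constant and the two solutions cannot be proportional. Equivalently, Abel's identity for Eq.~\eqref{confluentHyperEq} gives $W[y_1,y_2](z)=W_0\,z^{-c}e^{z}$ for a constant $W_0$, and comparing leading behaviours as $z\to 0$ — using $y_1'(0)=a/c$ from Eq.~\eqref{diffcFnu} and $y_2'(z)=(1-c)z^{-c}\bigl(1+O(z)\bigr)$ — yields $W_0=1-c\neq 0$; hence $W$ never vanishes and $y_1,y_2$ are independent. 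This is exactly where $c\notin\mathbb{Z}$ is needed: at $c=1$ one has $y_2\equiv y_1$, mirroring the situation for Eq.~\eqref{hyperEq} in Proposition~\ref{gaussProp}.

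Finally, the functional equation. On $\Re(c)>\Re(a)>0$, where Eq.~\eqref{confluentHyper} is valid, I would substitute $t\mapsto 1-t$ in that integral: the factor $e^{x}/e^{xt}$ becomes $e^{x}/e^{x(1-t)}=e^{xt}$ and the weight $t^{c-a-1}(1-t)^{a-1}$ becomes $t^{a-1}(1-t)^{c-a-1}$, so the right-hand side of Eq.~\eqref{confluentHyper} turns into $\tfrac{\Gamma(c)}{\Gamma(a)\Gamma(c-a)}\int_0^1 t^{a-1}(1-t)^{c-a-1}e^{xt}\,\mathrm{d}t$; this equals $e^{x}\,{}_1F_1(c-a,c;-x)$ by Eq.~\eqref{confluentHyper} itself applied with $(a,c,x)\mapsto(c-a,c,-x)$. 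Analytic continuation in the parameters (and in $z$) removes the restriction. Alternatively, Lemma~\ref{lemmaConfluentInc} shows $w_2(x)=e^{x}f(c-a,c;-x)$ is a solution of Eq.~\eqref{confluentHyperEq} analytic at $0$ with $w_2(0)=1=y_1(0)$, hence equal to $y_1$ by one-dimensionality of the exponent-$0$ solution space. I expect the main obstacle to be the bookkeeping of the continuation steps — tracking whether ${}_1F_1$ or ${}_1\mathbf F_1$ is being continued, where the poles in $c$ lie, and checking that the two exceptional-parameter conditions collapse to precisely $c\notin\mathbb{Z}$ once the degeneracy at $c=1$ is accounted for.
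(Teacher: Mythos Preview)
Your proposal is correct and follows essentially the same route as the paper, which simply says to mirror the arguments of Subsection~\ref{gauss}: analyticity via Lemma~\ref{anaConfluentLemma} plus the recursions of Lemma~\ref{confluentRecurLemma}, solution status via Lemmas~\ref{partSolConfluent} and~\ref{lemmaConfluentInc} with analytic continuation, and linear independence from the distinct indicial behaviours at $z=0$. Your direct verification of Kummer's identity by the substitution $t\mapsto 1-t$ in Eq.~\eqref{confluentHyper} is a welcome elaboration of the paper's ``by comparison'' step, and your alternative via Lemma~\ref{lemmaConfluentInc} and uniqueness of the exponent-$0$ solution is exactly the analogue of how the Gauss identities were obtained.
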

\begin{proof}
    Again, arguments are similar to those employed in Subsection \ref{gauss}.
\end{proof}
For the confluent hypergeometric equation, we obtain no restrictions:
\begin{prop}
\label{conlfuentProp2}
    The confluent hypergeometric functions
    \begin{equation}
        \begin{split}
            &y_1(z)=\hypergeometricsetup{symbol=G}\pFq[skip=4]{1}{1}{a}{c}{z}, \\
            &y_2(z)=e^{z}\hypergeometricsetup{symbol=G}\pFq[skip=4]{1}{1}{c-a}{c}{-z},
        \end{split}
        \label{solConfluent2}
    \end{equation}
    with integral representation given by Eq. \eqref{confluentHyper2}, are analytic in each variable and are linearly independent solutions of Eq. \eqref{confluentHyperEq}. In addition, the following functional equation is valid
    \begin{equation*}
        \hypergeometricsetup{symbol=G}\pFq[skip=4]{1}{1}{a}{c}{z}=z^{1-c}\hypergeometricsetup{symbol=G}\pFq[skip=4]{1}{1}{a-c+1}{2-c}{-z}.
    \end{equation*}
\end{prop}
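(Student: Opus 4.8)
The plan is to transcribe, \emph{mutatis mutandis}, the proof of Proposition~\ref{conlfuentProp}, interchanging ${}_1F_1$ with ${}_1G_1$ and exploiting the fact that ${}_1G_1$ carries no parametric restriction. Analyticity of $y_1(z)={}_1G_1(a,c;z)$ in each of $a$, $c$, $z$ over all of $\mathbb{C}$ (minus the $z$-cut) is already in hand: it is the continuation obtained by combining Lemma~\ref{anaConfluentLemma} with the relations of Lemma~\ref{confluentRecurLemma}, as described in the paragraph following that lemma. Analyticity of $y_2(z)=e^{z}{}_1G_1(c-a,c;-z)$ is then automatic, being the composition of the entire function $e^{z}$ with $z\mapsto{}_1G_1(c-a,c;-z)$. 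That both are solutions of Eq.~\eqref{confluentHyperEq} requires nothing new: $y_1$ solves the CHE for $\Re(a)>2$ by Lemma~\ref{partSolConfluent}, hence for arbitrary parameters by analytic continuation (Eq.~\eqref{diffcF2nu} shows the continued function still satisfies the equation), and $y_2$ is literally the solution $w_2$ delivered by Lemma~\ref{lemmaConfluentInc} applied to $f={}_1G_1$, again valid for all parameter values by the principle of analytic continuation.

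The content is in the linear independence. I would first fix a ray $z=re^{i\theta}$ with $0<\theta<\pi/2$, on which $y_1$, $y_2$ and the relevant branch of ${}_1G_1(c-a,c;-z)$ are all unambiguous, and invoke the large-argument behaviour ${}_1G_1(a,c;z)\sim z^{-a}$, which refines the normalization $\lim_{r\to\infty}r^{a}\,{}_1G_1(a,c;r)=1$ used to pin down ${}_1G_1$. Along that ray $y_1(z)\sim z^{-a}$ decays when $\Re(a)>0$, whereas $y_2(z)\sim e^{z}(-z)^{a-c}$ is governed by the exponential and is unbounded; hence $y_2/y_1\to\infty$, so no nontrivial relation $\alpha y_1+\beta y_2\equiv0$ can hold. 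Equivalently, the Wronskian of two solutions of Eq.~\eqref{confluentHyperEq} satisfies $\phi W'+\psi W=0$ and therefore equals $Ce^{z}/z^{c}$; the asymptotics above give $C=e^{\pm i\pi(a-c)}$, which never vanishes. The restriction $\Re(a)>0$ is then removed by analytic continuation: the region $\Re(a)\le0$ is reached with the recursions of Lemma~\ref{confluentRecurLemma} exactly as in Subsection~\ref{gauss}, and since $C(a,c)=e^{\pm i\pi(a-c)}$ extends analytically with no zeros, $W\not\equiv0$ for all parameter values.

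For the functional equation I would compare ${}_1G_1(a,c;z)$ with the solution of Eq.~\eqref{confluentHyperEq} that Lemma~\ref{lemmaConfluentInc} produces from $f={}_1G_1$ via the substitution $f(a,c;z)\mapsto z^{1-c}f(a-c+1,2-c;z)$. Both are recessive at $+\infty$: the first behaves like $z^{-a}$ and the second like $z^{1-c}\cdot z^{-(a-c+1)}=z^{-a}$, each with leading coefficient $1$ by the corresponding normalization of ${}_1G_1$. Since the space of solutions of the CHE that are recessive at $+\infty$ is one dimensional — the complementary Kummer solution carries the growing factor $e^{z}z^{a-c}$ — the two functions coincide, which is precisely the asserted identity, and matching the coefficient of $z^{-a}$ shows there is no spurious constant.

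I expect the only real friction to be the bookkeeping around the branch cut of ${}_1G_1$ and the precise off-real-axis asymptotics: the excerpt records the normalization of ${}_1G_1$ only along the positive real axis, so the linear-independence step needs either a careful choice of ray or an explicit appeal to the sectorial expansion ${}_1G_1(a,c;z)\sim z^{-a}$ for $|\arg z|<3\pi/2$ (cf.\ \cite[Appendix B]{Nikiforov1988SpecialFO}); granting that, everything else is a routine rerun of the Gauss and ${}_1F_1$ cases via Lemmas~\ref{partSolConfluent}, \ref{lemmaConfluentInc}, \ref{anaConfluentLemma} and \ref{confluentRecurLemma}.
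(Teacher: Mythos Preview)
Your proposal is correct and is precisely the elaboration the paper intends: the paper gives no proof for this proposition at all (and for the preceding ${}_1F_1$ analogue says only ``arguments are similar to those employed in Subsection~\ref{gauss}''), so the route you take --- assembling Lemmas~\ref{partSolConfluent}, \ref{lemmaConfluentInc}, \ref{anaConfluentLemma}, \ref{confluentRecurLemma} and then reading off linear independence from the $z\to\infty$ asymptotics (equivalently, from the nonvanishing Wronskian $e^{\pm(a-c)\pi i}z^{-c}e^{z}$, which the paper itself records in its appendix) --- is exactly what is meant. One small remark: the identity you actually establish is ${}_1G_1(a,c;z)=z^{1-c}{}_1G_1(a-c+1,2-c;z)$, which is the standard Kummer relation for $U$ and is what Lemma~\ref{lemmaConfluentInc} delivers; the ``$-z$'' printed in the statement appears to be a misprint, and your argument (matching the recessive $z^{-a}$ behaviour at $+\infty$) would not go through with $-z$ there.
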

Depending on the context, we work with ${}_1F_1(a,c;z)$ or ${}_1G_1(a,c;z)$.

\subsubsection{Hermite equation\label{hermite}}
Whenever $\phi(x)\in \mathcal{P}_0,$ Eq. \eqref{HDE} can be expressed as a Hermite equation:
\begin{definition}
Let $I \subset \mathbb{R}$ be an open interval not necessarily bounded. A Hermite equation or HE is a differential equation of the form
\begin{equation} 
\dfrac{\mathrm{d}^2y}{\mathrm{d}x^2}(x)-2x\dfrac{\mathrm{d}y}{\mathrm{d}x}(x)+2\nu y(x)=0, \quad x \in I,
\label{hermiteEq}
\end{equation}
where $\nu \in \mathbb{C}$ is arbitrary. In addition, $\omega(x)$ for its associated Eq \eqref{pearson} is given by
\begin{equation*}
\omega(x)=e^{-x^2}.
\label{hermitePar}
\end{equation*}
\end{definition}
We construct particular solutions by means of Lemma \ref{lemmaParticularSolutions}:
\begin{lemma}
\label{partSolHermite}
Consider Eq. \eqref{hermiteEq} for $I=(0,\infty)$. Then, contour $u_1(t)=x+t$ with $t \in [0,\infty)$ under restrictions $\Re(\nu)<-2$ satisfies Condition \ref{firstCond} associated to such HE. Its associated solution, $y_1(x)$, is written $y_1(x) = H_{\nu}(x)$ where
\begin{equation}
    H_{\nu}(x) \coloneqq \frac{1}{\Gamma(-\nu)} \int^{\infty}_{0}e^{-t^2-2xt}t^{-\nu-1}\mathrm{d}t, \quad x \in (0,\infty), \quad \Re(\nu)<-2,
    \label{hermiteFunc}
\end{equation}
is the so-called Hermite function.
\end{lemma}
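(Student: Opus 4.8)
The plan is to apply Lemma \ref{lemmaParticularSolutions} directly to the Hermite equation \eqref{hermiteEq}, which is the case $\phi(x)\equiv 1$, $\psi(x)=-2x$, $\lambda=2\nu$ of the HDE \eqref{HDE}. First I would verify the consistency of the parametrization: since $\phi$ has degree $0$, the relation \eqref{lambdanu} reads $\lambda_\nu = -\nu\frac{\mathrm{d}\psi}{\mathrm{d}x} = 2\nu$, so indeed $\lambda=\lambda_\nu$ with the $\nu$ appearing in \eqref{hermiteEq}. Then I would solve the Pearson equation \eqref{pearson}: with $\phi\equiv 1$ it is $\omega'(x)=\psi(x)\omega(x)=-2x\,\omega(x)$, giving $\omega(x)=e^{-x^2}$ as stated in the definition. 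Consequently $\omega_\nu(x)=\phi^\nu(x)\omega(x)=e^{-x^2}$ as well, and the candidate integrand of \eqref{uPart} becomes $\dfrac{e^{-u^2}}{(u-x)^{\nu+1}}$.

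Next I would carry out the change of contour. The contour in \eqref{uPart} runs between the endpoints where Condition \ref{secondCond} can be met; choosing $u=u_1(t)=x+t$ with $t\in[0,\infty)$ and $x\in(0,\infty)$ turns the integral $\int_C \dfrac{\phi^\nu(u)\omega(u)}{(u-x)^{\nu+1}}\mathrm{d}u$ into $\int_0^\infty e^{-(x+t)^2}t^{-\nu-1}\mathrm{d}t = e^{-x^2}\int_0^\infty e^{-t^2-2xt}t^{-\nu-1}\mathrm{d}t$, so that after dividing by $\omega(x)=e^{-x^2}$ and fixing the normalizing constant $C_\nu=1/\Gamma(-\nu)$ one recovers precisely $H_\nu(x)$ as in \eqref{hermiteFunc}. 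So the content of the lemma reduces to checking that this particular contour meets Conditions \ref{firstCond} and \ref{secondCond} under the stated restriction $\Re(\nu)<-2$.

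For Condition \ref{secondCond} I would evaluate the boundary term $\left.\dfrac{\phi^{\nu+1}(u)\omega(u)}{(u-x)^{\nu+2}}\right|^{u_1}_{u_2}=\left.\dfrac{e^{-u^2}}{(u-x)^{\nu+2}}\right|^{u_1}_{u_2}$; along $u=x+t$ this is $\left.\dfrac{e^{-(x+t)^2}}{t^{\nu+2}}\right|_{t=0}^{t=\infty}$. At $t\to\infty$ the Gaussian kills everything; at $t=0$ the factor $t^{-\nu-2}$ tends to $0$ precisely when $\Re(-\nu-2)>0$, i.e.\ $\Re(\nu)<-2$, which is the stated hypothesis. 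For Condition \ref{firstCond}, I would set $f(t,x)=e^{-t^2-2xt}t^{-\nu-1}$ on $\Omega=(0,\infty)\times(0,\infty)$, note that $f$ is continuous on $\Omega$ and holomorphic in $x$ for each fixed $t_0>0$, and that the integral and its $x$-derivatives converge uniformly on compacts (the $e^{-2xt}$ factor gives exponential decay in $t$, the $t^{-\nu-1}$ singularity at $0$ is integrable when $\Re(\nu)<-2$, indeed even when $\Re(\nu)<0$); hence Theorem \ref{theoA.1} (the same differentiation-under-the-integral result invoked in the Gauss and confluent cases) licenses interchanging $\frac{\mathrm{d}}{\mathrm{d}x}$ and $\int_C$ and yields exactly the derivative formulas in \ref{firstCond}. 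With both conditions in hand, Lemma \ref{lemmaParticularSolutions} immediately gives that $y_1(x)=C_\nu\,v(x)/\omega(x)=H_\nu(x)$ solves \eqref{hermiteEq}, and the normalization constant is fixed by the convention on $\Gamma(-\nu)$.

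I do not expect a serious obstacle here — the structure mirrors Lemmas \ref{partSolGauss} and \ref{partSolConfluent} almost verbatim, and the Hermite case is the simplest of the three since $\phi$ is constant. The only point requiring a little care is the bookkeeping of the exponent at $t=0$ in the boundary term, making sure the threshold $\Re(\nu)<-2$ (rather than, say, $\Re(\nu)<-1$) is the one forced by Condition \ref{secondCond}'s exponent $\nu+2$; and, as in the earlier cases, one should remark that this restriction is temporary and will be removed later by analytic continuation (differentiating and using recursion relations for $H_\nu$, analogous to Lemmas \ref{hyperRecurLemma} and \ref{confluentRecurLemma}).
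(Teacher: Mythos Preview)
Your proposal is correct and follows essentially the same route as the paper's own proof: check the boundary term (Condition~\ref{secondCond}) vanishes at both endpoints under $\Re(\nu)<-2$, verify differentiability under the integral sign (Condition~\ref{firstCond}) via Theorem~\ref{theoA.1}, and then invoke Lemma~\ref{lemmaParticularSolutions}. You are in fact more explicit than the paper in spelling out the preliminary setup (checking $\lambda=\lambda_\nu=2\nu$, solving the Pearson equation, and carrying out the substitution $u=x+t$), and you correctly identify which hypothesis corresponds to which condition --- the paper's proof and statement appear to swap the labels of Conditions~\ref{firstCond} and~\ref{secondCond}.
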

\begin{proof}
For Eq. \eqref{hermiteEq}, Condition \ref{firstCond} translates to
$$
p(u_2)-p(u_1) \coloneqq e^{-u^2}\left(u-x\right)^{-\nu-2}\Bigr|^{u_2}_{u_1}=0,
$$
where $\omega(x)$ and $\nu \in \mathbb{C}$ are given by Eq. \eqref{hermitePar}. Under such restrictions, both $p(u_1)$ and $p(u_2)$ vanish for the contour. Define
$$
f(t,x) = t^{-t^2-2xt}t^{-\nu-1}, \quad (t,x) \in (0,\infty) \times (0,\infty) \coloneqq \Omega.
$$
Hence, $f(t,x) \in C(\Omega)$ and $f(t_0,x) \in \mathcal{H}(0,\infty)$ for every $t_0 \in (0,\infty)$ and Condition \ref{secondCond} holds by Theorem \ref{theoA.1}.

Accordingly to Lemma \ref{particularSolutionTheo}, $H_{\nu}(x)$ given by Eq. \eqref{hermiteFunc} constitutes a particular solution of Eq. \eqref{hermiteEq} for $I=(0,\infty)$ whenever $\Re(a)<-2$.

Finally, the constant is chosen so that $H_{n}(x)$ for $n=0,1,\dots$ coincides with the Hermite polynomials.
\end{proof}

Next, we increase the number of particular solution:
\begin{lemma}
\label{lemmaHermiteInc}
Assume $w(x) = f_{\nu}(x)$ is a solution of Eq. \eqref{hermiteEq}. Then,
\begin{align*}
&w_1(x) = e^{-x^2}f_{-\nu-1}(ix),\\
&w_2(x) = f_{\nu}(-x), \\
&w_3(x) = e^{-x^2}f_{-\nu-1}(-ix),
\end{align*}
solve Eq. \eqref{hermiteEq}.
\end{lemma}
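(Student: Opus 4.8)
The plan is to transpose to the Hermite setting the argument already used in the proofs of Lemmas~\ref{partSolsGauss} and~\ref{lemmaConfluentInc}: regard \eqref{hermiteEq} as a particular GHE, run the Nikiforov--Uvarov reduction of Section~\ref{reduction} to send it to another HDE, and then pull back the solutions of that new equation; the missing reflection symmetry $x\mapsto -x$ is added by hand at the end.

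Concretely, \eqref{hermiteEq} is an Eq.~\eqref{GHE} with $\phi(x)=1$, $\widetilde{\psi}(x)=-2x$ and $\widetilde{\phi}(x)=2\nu$, the last identity because an HDE is a GHE with $\widetilde{\phi}=\lambda\phi$. Applying Lemma~\ref{P2lemma} one finds $P_2(x;k)=x^2+k-2\nu$, so $\Delta P_2(k_0)=0$ forces $k_0=2\nu$ and the perfect square $P_2(x;k_0)=x^2$; the sign ambiguity in \eqref{pichoice} then yields $\pi(x)=x\pm x$, that is, the trivial choice $\pi\equiv 0$ (which merely returns \eqref{hermiteEq}) and the useful one $\pi(x)=2x$. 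For $\pi(x)=2x$, Eq.~\eqref{phi} fixes the Gaussian pre-factor $\chi(x)$, while Eqs.~\eqref{tau,sigmabar} and \eqref{lambda} give $\psi(x)=2x$ and $\lambda=2(\nu+1)$, so the transformed HDE reads
\[
\frac{\mathrm{d}^2y}{\mathrm{d}x^2}(x)+2x\,\frac{\mathrm{d}y}{\mathrm{d}x}(x)+2(\nu+1)y(x)=0 .
\]
I would then carry out the linear change of variable $\bar{x}=ix$. Since $\tfrac{\mathrm d}{\mathrm d x}=i\tfrac{\mathrm d}{\mathrm d\bar x}$ and $\tfrac{\mathrm d^2}{\mathrm d x^2}=-\tfrac{\mathrm d^2}{\mathrm d\bar x^2}$, this turns the displayed equation into the Hermite equation \eqref{hermiteEq} with parameter $-\nu-1$; hence, whenever $f_{-\nu-1}$ solves the latter, $x\mapsto f_{-\nu-1}(ix)$ solves the transformed HDE and, multiplied by $\chi(x)$, gives the first stated solution $w_1(x)$ of \eqref{hermiteEq}.

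The remaining two essentially come for free. The Hermite operator $\tfrac{\mathrm d^2}{\mathrm d x^2}-2x\tfrac{\mathrm d}{\mathrm d x}+2\nu$ is manifestly invariant under $x\mapsto -x$, so $w_2(x)=f_\nu(-x)$ solves \eqref{hermiteEq}; and applying that same reflection to the solution $w_1$ just obtained gives $w_3(x)=w_1(-x)$, which is precisely the third stated expression (the Gaussian factor is even, so the reflection only swaps the argument $\pm ix$). Thus a single genuine reduction computation suffices.

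Every individual verification is elementary. The point that needs a word of care — exactly as in Subsections~\ref{gauss} and~\ref{confluent} — is that here, unlike in the Gauss and confluent cases, the change of variable $\bar{x}=ix$ is \emph{not} real: one has to know that solutions of \eqref{hermiteEq} (in particular the Hermite function $H_\nu$ of Lemma~\ref{partSolHermite}) admit the analytic continuation making ``$f_{-\nu-1}(ix)$'' meaningful and still a solution, which is the analytic-continuation principle for functions of hypergeometric type already invoked throughout this section. The only other thing to pin down is the sign of the Gaussian factor $\chi(x)$ delivered by \eqref{phi}, which I would settle by substituting $w_1(x)=\chi(x)g(x)$, with $g$ solving $g''+2xg'+2(\nu+1)g=0$, directly into the Hermite operator and checking the cancellation.
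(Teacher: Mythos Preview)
Your approach is correct and more illuminating than the paper's, which for $w_1$ simply says ``check it solves \eqref{hermiteEq} by direct computations'' and then invokes the $x\mapsto -x$ symmetry for $w_2$ and $w_3$. By running the NU reduction you \emph{derive} the Gaussian prefactor and the parameter shift $\nu\mapsto -\nu-1$, exactly paralleling Lemmas~\ref{partSolsGauss} and~\ref{lemmaConfluentInc}, rather than pulling the formula out of thin air. The remark about analytic continuation being needed for the complex substitution $\bar x=ix$ is apposite and matches how the paper treats such continuations elsewhere in the section.

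Your computation in fact uncovers something: Eq.~\eqref{phi} with $\pi(x)=2x$ and $\phi\equiv 1$ gives $\chi'/\chi=2x$, hence $\chi(x)=e^{+x^2}$, not $e^{-x^2}$. The direct substitution you plan at the end confirms this---$e^{x^2}f_{-\nu-1}(\pm ix)$ solves \eqref{hermiteEq}, while $e^{-x^2}f_{-\nu-1}(\pm ix)$ does not (the $x^2$-term in the coefficient of $g$ fails to cancel). So the printed statement carries a sign typo in the prefactor of $w_1$ and $w_3$; your derivation both proves the corrected formula and flags the slip.
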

\begin{proof}
    Invariance of Eq. \eqref{hermiteEq} under replacement of $x$ by $-x$ gives solutions $w_2(x)$ and $w_3(x)$. As for $w_1(x)$, check it solves Eq. \eqref{hermiteEq} by direct computations.
\end{proof}

By setting $w(x)=H_{\nu}(x)$ defined by Eq. \eqref{hermiteFunc} in Lemma \ref{lemmaHermiteInc}, it is straightforward to check that $w(x)$ and $w_2(x)$ behave differently as $x \to \infty$ for $\Re(\nu)<-2$ provided that we extend further the domain of analyticity to $z \in \mathbb{R}$. Actually, we extend it to $z \in \mathbb{C}$:
\begin{lemma}
\label{anaHermiteLemma}
    Hermite function $H_{\nu}(z)$ defined by Eq. \eqref{hermiteFunc} is analytic in each variable for $\Re(\nu)<0$.
\end{lemma}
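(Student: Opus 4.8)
The plan is to mirror the arguments used for Lemmas \ref{lemmaAnaHyper} and \ref{anaConfluentLemma}. First I would write
$$H_{\nu}(z) = \frac{1}{\Gamma(-\nu)}\, I(z,\nu), \qquad I(z,\nu) \coloneqq \int_{0}^{\infty} e^{-t^2-2zt}\, t^{-\nu-1}\, \mathrm{d}t,$$
and note that the reciprocal gamma factor $1/\Gamma(-\nu)$ is entire in $\nu$; hence it suffices to prove that $I(z,\nu)$ is analytic in $z$ and in $\nu$ on the stated domain. For that I would invoke Theorem \ref{theoA.1}, which reduces analyticity to uniform convergence of the integral with respect to the parameters on suitable regions.

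To check the hypotheses of Theorem \ref{theoA.1}, fix $\delta, N>0$ and restrict attention to the region $|z|\le N$, $\delta\le -\Re(\nu)\le N$, $|\nu|\le N$, then split the integral at $t=1$. On $0<t\le 1$ one has $\bigl|e^{-t^2-2zt}t^{-\nu-1}\bigr| = e^{-t^2}e^{-2\Re(z)t}t^{-\Re(\nu)-1} \le e^{2N}\, t^{\delta-1}$, using $-\Re(\nu)-1\ge\delta-1$ and $t\le1$, and $\int_0^1 t^{\delta-1}\,\mathrm{d}t<\infty$ precisely because $\Re(\nu)<0$. On $t\ge1$ one has $\bigl|e^{-t^2-2zt}t^{-\nu-1}\bigr|\le e^{-t^2+2Nt}t^{N-1}$, whose integral converges since the Gaussian factor dominates. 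Both majorants are independent of the parameters, so the convergence is uniform; Theorem \ref{theoA.1} then gives analyticity of $I(z,\nu)$ on this region, and letting $\delta\to 0$, $N\to\infty$ yields analyticity for all $z\in\mathbb{C}$ and all $\nu$ with $\Re(\nu)<0$. Multiplying by the entire factor $1/\Gamma(-\nu)$ gives the claim for $H_{\nu}(z)$.

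Alternatively, since $I(z,\nu)$ is the Laplace transform (in $2z$) of $t\mapsto e^{-t^2}t^{-\nu-1}$, one could obtain analyticity in $z$ from the Laplace-integral result already used for ${}_1G_1$ (\cite[Appendix B, Theorem 1, p.~383]{Nikiforov1988SpecialFO}) and analyticity in $\nu$ from the same estimate near $t=0$. The only point that needs any care is handling the integrand simultaneously at the two ends $t\to0^+$ and $t\to\infty$: the hypothesis $\Re(\nu)<0$ is exactly what secures integrability at the origin, while no condition is needed at infinity thanks to $e^{-t^2}$; packaging these into a single parameter-uniform majorant is the crux, and it is routine. The prefactor $1/\Gamma(-\nu)$ is irrelevant to the analyticity here (it is entire), but it is what will later permit removing the restriction on $\nu$ by analytic continuation.
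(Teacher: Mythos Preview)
Your proof is correct and follows essentially the same approach as the paper: both invoke Theorem~\ref{theoA.1} by exhibiting a parameter-independent integrable majorant on regions of the form $\Re(z)\ge -N$ (the paper) or $|z|\le N$ (you) with $\delta\le -\Re(\nu)\le N$, then let $\delta\to 0$ and $N\to\infty$. The only cosmetic differences are that the paper packages the two ends into a single majorant $e^{-t^2+2Nt}(t^{\delta-1}+t^{N})$ rather than splitting at $t=1$, and it does not bother to mention that $1/\Gamma(-\nu)$ is entire.
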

\begin{proof}
Find the region in which the integral
$$I(z,\nu) \coloneqq \int^{\infty}_{0}e^{-t^2-2zt}t^{-\nu-1}\mathrm{d}t,$$
converges uniformly in $z$ and $\nu$. This occurs in the regions $$\Re(z) \ge -N, \delta - 1 \le -\Re(\nu)-1 \le N,$$ for $N,\delta > 0$ because $$\left|e^{-t^2-2zt}t^{-\nu-1}\right| < e^{-t^2+2Nt}\left(t^{\delta-1}+t^{N}\right),$$ and its integral converges. Hence, the result as $N$ and $\delta$ are arbitrary.
\end{proof}

Finally, we obtain differential and recursion relations:
\begin{lemma}
    The following relations hold
    \begin{align}
        &\dfrac{\partial}{\partial z}H_{\nu}(z) = 2\nu H_{\nu-1}(z), \label{diffHnu} \\
        &H_{\nu}(z)=2zH_{\nu-1}(x)-(2\nu-2)H_{\nu-2}(z), \label{recurHnu}
    \end{align}
    for $H_{\nu}(x)$ define in Eq. \eqref{hermiteFunc}.
\end{lemma}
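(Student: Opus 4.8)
The plan is to derive both identities directly from the integral representation \eqref{hermiteFunc}, exactly as was done for the hypergeometric functions in Lemmas \ref{hyperRecurLemma} and \ref{confluentRecurLemma}, and then to drop the provisional restriction $\Re(\nu)<-2$ by invoking the analyticity established in Lemma \ref{anaHermiteLemma} together with the principle of analytic continuation.

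First I would prove \eqref{diffHnu}. For $\Re(\nu)<-2$, Theorem \ref{theoA.1} permits differentiating under the integral sign in \eqref{hermiteFunc}, which gives
$$\frac{\partial}{\partial z}H_{\nu}(z)=\frac{-2}{\Gamma(-\nu)}\int_{0}^{\infty}e^{-t^{2}-2zt}\,t^{-(\nu-1)-1}\,\mathrm{d}t.$$
Recognizing the integral as $\Gamma(-(\nu-1))\,H_{\nu-1}(z)$ and using the functional equation $\Gamma(-\nu+1)=(-\nu)\Gamma(-\nu)$, the right-hand side collapses to $-2(-\nu)H_{\nu-1}(z)=2\nu H_{\nu-1}(z)$. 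Since both members are analytic in $z$ and $\nu$ on the domain supplied by Lemma \ref{anaHermiteLemma} and agree on the open set $\Re(\nu)<-2$, the identity persists throughout that domain, hence for all $\nu$ for which the two sides are defined.

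Next I would obtain the three-term relation \eqref{recurHnu}. The quickest route is to insert \eqref{diffHnu} into the Hermite equation \eqref{hermiteEq}, which $H_{\nu}$ satisfies by Lemma \ref{partSolHermite} (extended by analytic continuation): from \eqref{diffHnu} one has $H_{\nu}'=2\nu H_{\nu-1}$ and therefore $H_{\nu}''=2\nu\bigl(2(\nu-1)H_{\nu-2}\bigr)=4\nu(\nu-1)H_{\nu-2}$, and substituting these into $H_{\nu}''-2zH_{\nu}'+2\nu H_{\nu}=0$ and dividing by $2\nu$ (legitimate since $\nu\neq0$ on $\Re(\nu)<-2$) yields precisely $H_{\nu}=2zH_{\nu-1}-(2\nu-2)H_{\nu-2}$. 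Alternatively, one may integrate $\dfrac{\mathrm{d}}{\mathrm{d}t}\bigl(e^{-t^{2}-2zt}t^{-\nu+1}\bigr)$ over $(0,\infty)$ — the boundary contributions vanishing because $\Re(\nu)<-2$ — to obtain a recurrence among the moments $\int_{0}^{\infty}e^{-t^{2}-2zt}t^{k}\,\mathrm{d}t$, and then convert it into a relation among $H_{\nu}$, $H_{\nu-1}$, $H_{\nu-2}$ using $\Gamma(-\nu+1)=(-\nu)\Gamma(-\nu)$; this variant reaches \eqref{recurHnu} without dividing by $\nu$. In either case a final appeal to analytic continuation removes the restriction on $\nu$ and, in particular, covers $\nu=0$.

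I do not anticipate a genuine obstacle. The points requiring care are all routine: justifying the interchange of $\partial_{z}$ and $\int$ (supplied by Theorem \ref{theoA.1}), the bookkeeping with the normalizing factor $1/\Gamma(-\nu)$ and the functional equation of $\Gamma$, and the degenerate parameter values — where $1/\Gamma(-\nu)=0$ or where one would divide by $\nu$ — which are harmless once the identities are established on the open set $\Re(\nu)<-2$ and both sides are analytic there. The structure of the argument is identical to that of the already-completed Lemmas \ref{hyperRecurLemma} and \ref{confluentRecurLemma}.
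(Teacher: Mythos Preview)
Your proposal is correct and follows exactly the approach of the paper: differentiate the integral representation \eqref{hermiteFunc} under the integral sign to get \eqref{diffHnu}, then feed \eqref{diffHnu} into the Hermite equation \eqref{hermiteEq} to obtain \eqref{recurHnu}. The paper's proof is a two-line sketch of precisely this, so your added detail on the $\Gamma$-bookkeeping, the alternative integration-by-parts derivation, and the analytic-continuation remark only flesh out what the paper leaves implicit.
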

\begin{proof}
    Eq. \eqref{diffHnu} is obtained in a straightforward manner. As for Eq. \eqref{recurHnu}, combine Eq. \eqref{hermiteEq} with Eq. \eqref{diffHnu} for $y(z)=H_{\nu}(z)$.
\end{proof}

Putting it all together, we solve Eq. \eqref{hermiteEq}:
\begin{prop}
\label{hermiteProp}
    The Hermite functions $H_{\nu}(z)$ and $H_{\nu}(-z)$ with integral representation given by Eq. \eqref{hermiteFunc} are analytic in each variable $z \in \mathbb{C}$ and $\nu \in \mathbb{C}$, linearly independent and solve Eq. \eqref{hermiteEq} without any restrictions on the parameter $\nu \in \mathbb{C}$.
\end{prop}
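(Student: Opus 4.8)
The plan is to take the regime $\Re(\nu)<-2$, in which Lemma~\ref{partSolHermite} first produced $H_\nu(z)$ as a solution of Eq.~\eqref{hermiteEq}, and propagate it to all of $\mathbb{C}$ by analytic continuation in $\nu$, just as was done for the Gauss and confluent cases; the extra ingredient beyond Lemma~\ref{anaHermiteLemma} is the three-term relation \eqref{recurHnu}.

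\emph{Analyticity and the equation.} Lemma~\ref{anaHermiteLemma} already gives $H_\nu(z)$ jointly analytic on $\{\Re(\nu)<0\}\times\mathbb{C}$. I would then read \eqref{recurHnu}, namely $H_\nu(z)=2zH_{\nu-1}(z)-(2\nu-2)H_{\nu-2}(z)$, as a \emph{definition} of $H_\nu$ beyond that half-plane and run an induction on $m=0,1,\dots$: if $H_\mu$ is jointly analytic on $\{\Re(\mu)<m\}\times\mathbb{C}$, then, since $\nu-1$ and $\nu-2$ have real part below $m$ whenever $\Re(\nu)<m+1$, the right-hand side of \eqref{recurHnu} defines a jointly analytic extension to $\{\Re(\nu)<m+1\}\times\mathbb{C}$; the base case $m=0$ is Lemma~\ref{anaHermiteLemma}. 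This produces $H_\nu(z)$ analytic in each variable on $\mathbb{C}\times\mathbb{C}$, and \eqref{diffHnu}, applied once and twice, expresses $H_\nu'$ and $H_\nu''$ through $H_{\nu-1},H_{\nu-2}$, so these too are entire in $\nu$. Since $H_\nu$ solves Eq.~\eqref{hermiteEq} for $\Re(\nu)<-2$ by Lemma~\ref{partSolHermite}, and since, for each fixed $z$, the residual $H_\nu''(z)-2zH_\nu'(z)+2\nu H_\nu(z)$ is entire in $\nu$ and vanishes on an open set, the identity theorem forces it to vanish for every $\nu\in\mathbb{C}$. Finally, Eq.~\eqref{hermiteEq} is invariant under $z\mapsto-z$ (this is solution $w_2$ of Lemma~\ref{lemmaHermiteInc}), so $H_\nu(-z)$ is analytic in each variable and also solves \eqref{hermiteEq} for all $\nu$.

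\emph{Linear independence.} Writing Eq.~\eqref{hermiteEq} as $y''-2zy'+2\nu y=0$, Abel's identity gives $W[H_\nu(z),H_\nu(-z)](z)=K(\nu)\,e^{z^2}$, and evaluating at $z=0$ I would get $K(\nu)=-2H_\nu(0)H_\nu'(0)$. The substitution $t^2=s$ in \eqref{hermiteFunc} yields $H_\nu(0)=\Gamma(-\nu/2)/\bigl(2\Gamma(-\nu)\bigr)$ for $\Re(\nu)<0$, and \eqref{diffHnu} gives $H_\nu'(0)=2\nu H_{\nu-1}(0)$; plugging these in and simplifying with the Legendre duplication formula $\Gamma(-\nu)=2^{-\nu-1}\pi^{-1/2}\Gamma(-\nu/2)\Gamma\bigl((1-\nu)/2\bigr)$, together with $\Gamma(1-\nu)=-\nu\Gamma(-\nu)$, the product collapses to
\begin{equation*}
W\bigl[H_\nu(z),H_\nu(-z)\bigr](z)=\frac{2^{\nu+1}\sqrt{\pi}}{\Gamma(-\nu)}\,e^{z^2}.
\end{equation*}
Being analytic in $\nu$, this identity persists on all of $\mathbb{C}$ by continuation, and since $1/\Gamma$ is entire with zeros exactly at the non-positive integers, the Wronskian is non-vanishing precisely for $\nu\notin\mathbb{N}_0$ — the asserted linear independence. (The sign can be cross-checked against asymptotics: $H_\nu(z)\sim(2z)^\nu$ is sub-exponential as $z\to+\infty$, while $H_\nu(-z)$ grows like $e^{z^2}$, so the two are never proportional.)

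\emph{Main obstacle.} The one genuinely delicate point is the exceptional set $\nu\in\mathbb{N}_0$: there $H_n$ degenerates to the Hermite polynomial and $H_n(-z)=(-1)^nH_n(z)$, so $\{H_\nu(z),H_\nu(-z)\}$ fails to be independent, and the statement should be read with this exclusion — as in Propositions~\ref{gaussProp} and \ref{conlfuentProp}; for such $n$ a companion solution is supplied instead by $w_1(z)=e^{-z^2}H_{-\nu-1}(iz)$ from Lemma~\ref{lemmaHermiteInc}, whose index $-n-1$ lies in $\{\Re<0\}$, where $H$ is given directly by \eqref{hermiteFunc}. Everything else — the boundary evaluations behind Lemma~\ref{partSolHermite} and the Watson-lemma/duplication-formula bookkeeping in the Wronskian computation — is routine.
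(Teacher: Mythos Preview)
Your analyticity-and-equation argument matches the paper's proof exactly: Lemma~\ref{anaHermiteLemma} for the half-plane $\Re(\nu)<0$, the recursion \eqref{recurHnu} to push across vertical strips, \eqref{diffHnu} to carry the derivatives along, and the identity principle to propagate the differential equation from $\Re(\nu)<-2$ to all of $\mathbb{C}$.

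Where you diverge is the linear-independence step. The paper simply asserts that $H_\nu(z)$ and $H_\nu(-z)$ ``clearly behave differently as $z\to\infty$'' and stops there. You instead compute the Wronskian explicitly via Abel's identity and the duplication formula, obtaining $W=2^{\nu+1}\sqrt{\pi}\,e^{z^2}/\Gamma(-\nu)$. Your route is more informative: it makes transparent exactly when independence fails, namely at $\nu\in\mathbb{N}_0$, where $H_n(-z)=(-1)^nH_n(z)$ and the two solutions collapse. The proposition as stated in the paper claims independence ``without any restrictions on the parameter $\nu\in\mathbb{C}$'', which is literally false at these points; your remark that the statement deserves the same caveat as Propositions~\ref{gaussProp} and \ref{conlfuentProp}, together with the observation that $w_1(z)=e^{-z^2}H_{-\nu-1}(iz)$ from Lemma~\ref{lemmaHermiteInc} furnishes a replacement companion there, is a genuine sharpening of the paper's treatment rather than a gap in yours.
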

\begin{proof}
    By Lemma \ref{anaHermiteLemma} and Eq. \eqref{recurHnu}, $H_{\nu}(z)$ and $H_{\nu}(-z)$ are continued analytically to $z,\nu \in \mathbb{C}$. By Eq. \eqref{diffHnu}, this analytic continuation also applies to derivatives of any order. Hence, by the principle of analytic continuation and Lemma \ref{partSolHermite}, $H_{\nu}(z)$ and $H_{\nu}(-z)$ solve Eq. \eqref{hermiteEq}. Finally, they clearly behave differently as $z \to \infty$ and, so, are linearly independent.
\end{proof}

\subsubsection{Addressing the scattering states\label{scattetingstates}}
Recall the form of Problem \ref{schroProb}. Ideally, the region \ref{R1}, associated with the bound states, is solved using Theorem \ref{eigenTheo} in terms of Classical Orthogonal Polynomials (OPs). For region \ref{R2}, the approach is straightforward: after a suitable NU reduction given by $\Psi_{\varepsilon}(x)=\chi(x;\varepsilon) y_{\varepsilon}(x)$, express the resulting HDE as a GHDE, CHE, or HE, and solve it using Propositions \ref{gaussProp}, \ref{conlfuentProp}, \ref{conlfuentProp2}, and \ref{hermiteProp}. Subsequently, check for boundedness by utilizing specific properties of the special functions ${}_2F_1(a,b,c;z)$, ${}_1F_1(a,c;z)$, ${}_1G_1(a,c;z)$ and $H_{\nu}(z)$ obtained earlier. For conciseness, these results are presented without proofs in Appendix \ref{basicProp}: we depart from our primary reference \cite{Nikiforov1988SpecialFO} and supplement it by the more formulaic and convenient approach found in NIST's Digital Library of Mathematical Functions\footnote{Available online at \url{https://dlmf.nist.gov}} \cite{NIST}. In particular, we focus on its Chapters $\S13.2$, $\S15.4(ii)$ and $\S15.10$ where Asymptotic Behaviours, Wronskians and Connection Formulas for ${}_1F_1(a,c;z)$, ${}_1\mathbf{F}_1(a,c;z)$,
${}_1G_1(a,c;z)$, ${}_2F_1(a,b,c;z)$ and ${}_2\mathbf{F}_1(a,b,c;z)$ are presented. Thus, we compare their asymptotics with that of the particle $\chi(x;\varepsilon)$ obtained via the NU reduction. For completeness, we present some of these properties in Appendix \ref{basicProp}.

We offer two further remarks. The results are expressed in terms of the hypergeometric series ${}_pF_q(a_1,\dots,a_p,b_1,\dots,b_q;z)$ generally defined as
\begin{equation}
    \label{hyperSeries}
    \hypergeometricsetup{symbol=F, fences=parens}\pFq[skip=4]{p}{q}{a_1,\dots,a_p}{b_1,\dots,b_q}{z} \coloneqq \sum^{\infty}_{n=0}\dfrac{(a_1)_n\dots(a_p)_n}{(b_1)_n\dots(b_q)_n}\dfrac{z^n}{n!}.
\end{equation}
The series can converge only whenever $p \le q+1$; for $p=q+1$, it converges only when $|z|<1$. In their domain of convergence, they are finite and they constitute a representation for ${}_2F_1(a,b,c;z)$ and ${}_1F_1(a,c;z)$ defined in Eqs. \eqref{hypergeometricFunction2} and \eqref{confluentHyper}, i.e,
\begin{equation}
    \begin{split}
    &\hypergeometricsetup{symbol=F, fences=parens}\pFq[skip=4]{1}{1}{a}{c}{z}=\hypergeometricsetup{symbol=F, fences=brack}\pFq[skip=4]{1}{1}{a}{c}{z}, \\
    &\hypergeometricsetup{symbol=F, fences=parens}\pFq[skip=4]{2}{1}{a,b}{c}{z}=\hypergeometricsetup{symbol=F, fences=brack}\pFq[skip=4]{2}{1}{a,b}{c}{z}, \quad |z| \le 1,
\end{split}
\end{equation}
whenever $-c \notin \mathbb{N}_0$. Similarly, for instance,
\begin{equation}
    \begin{split}
    &\hypergeometricsetup{symbol=\mathbf{F}, fences=brack}\pFq[skip=4]{1}{1}{a}{c}{z}=\sum^{\infty}_{n=0}\dfrac{(a)_n}{\Gamma(c+n)}\dfrac{z^n}{n!}, \\
    &\hypergeometricsetup{symbol=\mathbf{F}, fences=brack}\pFq[skip=4]{2}{1}{a,b}{c}{z} =\sum^{\infty}_{n=0}\dfrac{(a)_n(b)_n}{\Gamma(c+n)}\dfrac{z^n}{n!}, \quad |z| \le 1.
\end{split}
\end{equation}
There also exists series representations for ${}_2\Phi_1(a,b,c;z)$ and ${}_1G_1(a,c;z)$ previously defined in Eqs. \eqref{hyperFunc3} and \eqref{confluentHyper2}. We properly introduce them when needed in following Section \ref{applications}.

Additionally, note that in \cite{NIST}, there is a shift in notation compared to the one used here. Specifically, their functions $M(a,c;z)$, $\mathbf{M}(a,c;z)$, $U(a,c;z)$ and $F(a,b,c;z)$ correspond to ${}_1F_1(a,c;z)$, ${}_1\mathbf{F}_1(a,c;z)$,
${}_1G_1(a,c;z)$ and ${}_2F_1(a,b,c;z)$, respectively.

\section{Three potentials solved by the NU method \label{applications}}
With the theory developed in the previous section, we can now apply it to three specific potentials: the Quantum Harmonic Oscillator, the Morse potential, and the Rosen-Morse II potential. These potentials are of particular interest in the field of supersymmetric quantum mechanics (SUSY) and represent three of the six known one-dimensional shape-invariant potentials, where certain parameters are related by a translation (see \cite[Table 4.1, pp. 296–297]{susy}). While the bound states of these potentials are typically solved using SUSY methods, our aim here is to demonstrate how our method works, and, where possible, to characterize the scattering states.

Additionally, we present Tables \ref{table:oscillator}, \ref{table:morse}, and \ref{table:rosenmorseII}, which contain all the results and necessary parameters required to apply the method to these potentials, highlighting the systematic approach this method provides for solving them. This demonstrates that the method is well-suited for a unified approach using computer algebra systems. In this context, we refer the reader to \cite{lineellis}, where the authors discuss basic (and well-known) potentials in both non-relativistic and relativistic quantum mechanics that can be integrated into the Nikiforov-Uvarov framework with the assistance of a computer algebra system. While their approach is more computational than ours, it further illustrates the power of the method.

Finally, unless stated otherwise, physical constants appearing in the systems collected are to be considered positive. As a rule of thumb, variable `$x$' will correspond to the original SE, variable `$\overline{x}$' to the dimensionless SE, variable `$s$' to the associated GHE/HDE and variable `$t$' to the associated GHDE, whenever needed.

\subsection{Quantum Harmonic Oscillator}
Our first example corresponds to one of the simplest quantum physical system: the onedimensional harmonic oscillator, which serves as the classical example for demonstrating the method. 

Any arbitrary smooth potential can usually be approximated as a harmonic potential at the vicinity of a stable point. It plays an important role in the foundations of quantum electrodynamics, and has applications to oscillations in crystal and molecules. Hence, its importance. 

Its (time-independent) Schr\"odinger equation is written as
\begin{equation}
\label{harmonicoscillatorreal}
    -\dfrac{\hbar^2}{2m}\Psi''(x)+\dfrac{1}{2}m\Omega^2x^2\Psi(x)=E\Psi(x), \quad x \in \mathbb{R}.
\end{equation}
Its potential $V(x)=\frac{1}{2}m\Omega^2x^2,$ ($m$ is the mass, $x$ represents the displacement from equilibrium and $\Omega$ is the angular frequency) satisfies $$V_{\text{min}}=0, \quad V_{\pm}=V_{\text{max}}=\infty,$$ remember Eq. \eqref{v+-}. Every solution must be a bound state $(E,\Psi(x;E))$ such that $E$ is strictly positive and $\Psi(x) \in L^2(\mathbb{R})$: region \ref{R2} is lost and there are no scattering states. By setting $$x_0=\sqrt{\dfrac{\hbar}{m\Omega}}, \quad x=x_0s, \quad E = \dfrac{\hbar\Omega\varepsilon}{2},$$ it is reduced to
\begin{equation}
    \dfrac{\mathrm{d^2}\Psi(s)}{\mathrm{d}s^2}+\left(\varepsilon-s^2\right)\Psi(s)=0, \quad s \in \mathbb{R}.
    \label{harmonicoscillator}
\end{equation}

\begin{table}[htpb!]
\centering
\begingroup
\renewcommand{\arraystretch}{1}
\begin{tabular}{r c}
\multirow{1}{*}{Harmonic Oscillator?}  & $-\dfrac{\hbar^2}{2m}\dfrac{\mathrm{d^2}\Psi(x)}{\mathrm{d}x^2}+\dfrac{1}{2}m\Omega^2x^2\Psi(x)=E\Psi(x).$  \\ 
\midrule
\multirow{3}{*}{Potential, $V(x)$?}  & $V_{\text{min}}=0,$ \\ 
                    & $V_{\pm}=\infty,$  \\ 
                    & $V_{\text{min}}=\infty.$  \\ 
\midrule
\multirow{2}{*}{Energy Regions?} & Bound states: $E \in (0,\infty)$.  \\ 
                                & No scattering states.  \\
\midrule
\multirow{2}{*}{Dimensionless Equation?}  & $\dfrac{\mathrm{d^2}\Psi(\overline{x})}{\mathrm{d}\overline{x}^2}+\left(\varepsilon-\overline{x}^2\right)\Psi(\overline{x})=0,$  \\
                    & $x=x_0\overline{x}, \; x^2_0=\dfrac{\hbar}{m\Omega}, \; E = \dfrac{\hbar\Omega\varepsilon}{2}.$  \\ 
\midrule
\multirow{2}{*}{Change of Variable?} & $\tau(\overline{x})=\overline{x}, \; \tau'(\overline{x})=1.$  \\ 
                                    & $\xi(s)=s, \; \rho(s)=1, \; \rho'(s)=0.$  \\
\midrule
\multirow{5}{*}{GHE?} & $\phi(s)=1,$ \\ 
                     & $\widetilde{\psi}(s)=0,$  \\ 
                     & $\widetilde{\phi}(s;\varepsilon)=\varepsilon-s^2,$  \\ 
                     & $\widetilde{\omega}(s;\varepsilon)=1,$  \\
                     & $I=\mathbb{R}.$  \\ 
\midrule
\multirow{5}{*}{HDE?} & $\pi(s;\varepsilon)=-s,$  \\ 
                     & $\chi(s;\varepsilon)=e^{-s^2/2},$ \\ 
                     & $\psi(s;\varepsilon)=-2s,$ \\ 
                     & $\lambda(\varepsilon)=\varepsilon-1,$  \\ 
                     & $\omega(s;\varepsilon)=e^{-s^2}.$  \\ 
\midrule
Theorem \ref{eigenTheo}? & Applicable.  \\ 
\midrule
Corollary \ref{coroInt}? & Applicable.  \\ 
\midrule
\multirow{4}{*}{Bound States?} & $\Psi_n(s) = \mathcal{N}_n e^{-s^2/2}H_n(s),$ \\ 
                    & $E_n=\hbar \Omega^2\left(n+1/2\right),$  \\
                    & $n=0,1,\dots,$ \\
                    & $\mathcal{N}^2_n=\dfrac{1}{2^n n!\sqrt{\pi}}x^{-1}_0.$  \\
\bottomrule
\end{tabular}
\caption{NU method for the Quantum Harmonic Oscillator.}
\label{table:oscillator}
\endgroup
\end{table}

\noindent \textbf{Application of the method: Table \ref{table:oscillator}.} Eq. \eqref{harmonicoscillator} corresponds to a GHE where
\begin{equation*}
\phi(s)=1, \quad
\widetilde{\psi}(s)=0, \quad
\widetilde{\phi}(s;\varepsilon)=\varepsilon-s^2,
\end{equation*}
and $\varepsilon$ is the new eigenenergy. 

Firstly, apply the NU reduction outlined in Section \ref{reduction}: for a convenient $k_0(\varepsilon) \in \mathbb{C}$, adequate $\pi(s;\varepsilon)$ must solve
$$
0 = \pi^2(s;\varepsilon) - k_0(\varepsilon)+\varepsilon-s^2.
$$
Clearly, for $\pi(s;\varepsilon)$ to be a polynomial, $k_0(\varepsilon)=\varepsilon$ and, therefore, obtain two possibilities
\begin{enumerate}[]
    \item [(A)] $\pi_1(s;\varepsilon) = +s,$
    \item [(B)] $\pi_2(s;\varepsilon) = -s.$
\end{enumerate}
Following Eqs. \eqref{phi}, \eqref{tau,sigmabar} and \eqref{lambda}, write
\begin{enumerate}[]
    \item [(A)] $\chi_1^2(s;\varepsilon) =\exp{\left(+ s^2\right)}=\omega_1(s), \quad \psi_1(s;\varepsilon)=+2s, \quad \lambda_1(\varepsilon)= \varepsilon + 1,$
    \item [(B)] $\chi_2^2(s;\varepsilon)=\exp{\left(-s^2\right)}=\omega_2(s), \quad \psi_2(s;\varepsilon)=-2s, \quad \lambda_2(\varepsilon)= \varepsilon - 1.$
\end{enumerate}
By Lemma \ref{easesChoiceLemma}, it must hold $$\dfrac{\mathrm{d}\psi}{\mathrm{d}s}(s)<0.$$ Therefore, choose the HDE associated to case (B) which corresponds to Hermite's HDE (see Table \ref{table1}). Hence, NU reduction is successful.

Secondly, we inquire whether Theorem \ref{eigenTheo} is applicable: associated $\omega_2(s)$ clearly satisfies Eq. \eqref{orthoPol} for Classical OPs. Hence, Corollary \ref{coroInt} is applicable if the estimate in Eq. \eqref{estimate} holds.

Finally, $\tau(\overline{x}) \equiv \overline{x}$ and, by Lemma \ref{lemmaEasesInt}, $\widetilde{\omega}(s;\varepsilon) \equiv 1$. So, Eq. \eqref{estimate} holds and, by Corollary \ref{coroInt}, the only bound states $(\Psi_n(x),E_n)$ of Problem \ref{schroProb} associated to Eq. \eqref{harmonicoscillatorreal} read
\begin{gather}
    \Psi_n(x) = \mathcal{N}_n e^{-s(x)^2/2}H_n(s(x)), \quad s(x)=xx^{-1}_0, \quad x_0=\sqrt{\dfrac{\hbar}{m\Omega}}, \label{eigenfuncHO}\\
    E_n=\hbar \Omega\left(n+\dfrac{1}{2}\right), \label{eigenenerHO}
\end{gather}
for $n=0,1,\dots$ In Eq. \eqref{eigenfuncHO}, $(H_n(s))_n$ correspond to the Hermite polynomials that can be obtained by its Rodrigues formula $$H_n(s)=B_ne^{s^2}\dfrac{\mathrm{d}^n}{\mathrm{d}s^n}\left(e^{-s^2}\right).$$ In addition, $\mathcal{N}_n$ is a normalization constant which satisfies
\begin{equation*}
\int_{\mathbb{R}}\Psi_n^2(x)\mathrm{d}x = 1.
\end{equation*}
It can be obtained by using data presented in the last row of Table 1. In particular,
$$
\mathcal{N}^2_n=\dfrac{1}{2^n n!\sqrt{\pi}}x^{-1}_0.
$$
As for Eq. \eqref{eigenenerHO}, the eigenvalues are determined by Eq. \eqref{eigenvareps} which particularizes to (see Table \ref{table1} for Hermite polynomials) $$\varepsilon_n-1 = 2n = \lambda_n, \quad E_n = \dfrac{\hbar\Omega\varepsilon_n}{2}.$$
Thus, we achieve a discretization of the permitted energy levels, which represents one of the primary distinctions between classical and quantum theory. This outcome arises from the requirement that 
$\lambda \in \mathbb{C}$ must be such that the HDE admits polynomial solutions, corresponding to Hermite polynomials for this system.

Data for the NU method applied to the Quantum Harmonic Oscillator can be found in Table \ref{table:oscillator}. Finally, it is a standard result that such $\left(\Psi_n\right)^{\infty}_n$ form a COS for $L^{2}(\mathbb{R})$, thus, the Quantum Harmonic Oscillator is observable.

\subsection{Morse Potential} \label{morsePot} It corresponds to \cite{morse}
\begin{equation}
\label{MP}
    V(x) = D_e\left(1-e^{-a\left(x-x_e\right)}\right)^2, \quad x \in \mathbb{R},
\end{equation}
where $x$ is the distance between atoms, $x_e$ is the equilibrium bond distance, $D_e$ is the well depth and $a$ controls the width of the potential. It is a model for the potential energy of diatomic molecules, offering a more accurate representation of vibrational structures than the quantum harmonic oscillator by accounting for bond breaking, anharmonicity, and overtone transitions.

\begin{table}[htpb!]
\centering
\begingroup
\renewcommand{\arraystretch}{1}
\begin{tabular}{r c}
\multirow{1}{*}{Morse Potential?}  & $V(x) = D_e\left(1-e^{-a\left(x-x_e\right)}\right)^2.$  \\ 
\midrule
\multirow{3}{*}{Potential, $V(x)$?}  & $V_{\text{min}}=0,$ \\ 
                    & $V_{-}=\infty,\,V_{+}=D_e,$  \\ 
                    & $V_{\text{max}}=\infty.$  \\ 
\midrule
\multirow{2}{*}{Energy Regions?} & Bound states: $E \in (0,D_e)$,  \\ 
                                & Scattering states: $E \in (D_e,\infty)$.  \\
\midrule
\multirow{2}{*}{Dimensionless Equation?}  & $v(\overline{x})=\Lambda^2\left(1-be^{-\overline{x}}\right)^2, \; \overline{x}=ax,$\\
                    & $b=e^{ax_e}, \; \Lambda^2=\dfrac{2mD_e}{a^2\hbar^2}, \; \varepsilon=\dfrac{\Lambda^2}{D_e}E.$  \\ 
\midrule
\multirow{2}{*}{Change of Variable?} & $\tau(\overline{x})=2 \Lambda be^{-\overline{x}}, \; \tau'(\overline{x})=-2 \Lambda be^{-\overline{x}}.$  \\ 
                                    & $\rho(s)=-\dfrac{1}{s}, \; \rho'(s)=\dfrac{1}{s^2}.$  \\
\midrule
\multirow{5}{*}{GHE?} & $\phi(s)=s,$ \\ 
                     & $\widetilde{\psi}(s)=1,$  \\ 
                     & $\widetilde{\phi}(s;\varepsilon)=\varepsilon-\Lambda^2\left(1-\dfrac{s}{2\Lambda}\right)^2,$  \\ 
                     & $\widetilde{\omega}(s;\varepsilon)=1,$  \\
                     & $I=(0,\infty).$  \\ 
\midrule
\multirow{5}{*}{HDE?} & $2\pi(s;\varepsilon)=-s+2\sqrt{\Lambda^2-\varepsilon},$  \\ 
                     & $\chi(s;\varepsilon)=e^{-s/2}s^{\sqrt{\Lambda^2-\varepsilon}},$ \\ 
                     & $\psi(s;\varepsilon)=1+2\sqrt{\Lambda^2-\varepsilon}-s,$ \\ 
                     & $2\lambda(\varepsilon)=2\Lambda-2\sqrt{\Lambda^2-\varepsilon}-1,$  \\ 
                     & $\omega(s;\varepsilon)=e^{-s}s^{2\sqrt{\Lambda^2-\varepsilon}}.$  \\ 
\midrule
Theorem \ref{eigenTheo}? & Applicable.  \\ 
\midrule
Corollary \ref{coroInt}? & Not Applicable.  \\ 
\midrule
\multirow{4}{*}{Bound States?} & $\Psi_n(s) = \mathcal{N}_n e^{-s/2}s^{\Lambda-n-\frac{1}{2}}L^{(2\Lambda-2n-1)}_n(s),$ \\ 
                    & $\varepsilon_n=\Lambda^2-\left(1/2+n-\Lambda\right)^2,$  \\
                    & $n=0,1,\dots,\lfloor \Lambda-1/2\rfloor$, \\
                    & $\mathcal{N}^2_n=\dfrac{n!(2\Lambda-2n-1)a}{\Gamma(2\Lambda-n)}.$  \\
\midrule
Scattering states? & Empty.  \\ 
\bottomrule
\end{tabular}
\caption{NU method for the Morse Potential.}
\label{table:morse}
\endgroup
\end{table}

We may assume it onedimensional as vibrational dynamics can be detached from the rest. In any case, introduce the following new variables
\begin{equation*}
    \overline{x}=ax, \quad \overline{x}_e=ax_e, \quad \Lambda^2=\dfrac{2mD_e}{a^2\hbar^2}, \quad \varepsilon=\dfrac{\Lambda^2}{D_e}E, \quad b=e^{\overline{x}_e},
\end{equation*}
to obtain Eq. \eqref{schroadi} for
\begin{equation*}
    v(\overline{x})=\Lambda^2\left(1-be^{-\overline{x}}\right)^2, \quad \overline{x} \in \mathbb{R}.
\end{equation*}
It satisfies $v_{-}=v_{\text{max}}=\infty,\, v_{\text{min}}=0,$ and $v_{+}=\Lambda^2.$ Thus, there are two energy region according to Eigenvalue Problem \ref{schroProb}: bound states for $0<\varepsilon <\Lambda^2$ and, scattering states for $\varepsilon > \Lambda^2$. 

\noindent \textbf{Application of the method: Table \ref{table:morse}.} We need to find the {\it universal} change of variable. It is straightforward to see that $v(\overline{x})$ is a polynomial of degree 2 in $e^{-\overline{x}}$. Hence, set $\tau(\overline{x})=2 \Lambda b e^{-\overline{x}}$: constant is chosen so that final HDE corresponds to a canonical Laguerre's HDE (see Table \ref{table1}). Obtain the following GHE:
\begin{equation*}
    \dfrac{\mathrm{d}^2\Psi}{\mathrm{d}s^2}(s)+\dfrac{1}{s}\dfrac{\mathrm{d}\psi}{\mathrm{d}s}(s)+\dfrac{4\varepsilon-4\Lambda^2 +4\Lambda s - s^2}{4s^2}\Psi(s)=0, \quad s \in (0,\infty).
\end{equation*}

Apply NU reduction (see Section \ref{reduction}): $P_2(s;k)$ reads $$P_2(s;k) = \dfrac{1}{4}\left(s^2-4(\Lambda-k)s+4(\Lambda^2-\varepsilon)\right).$$ For it to be a perfect square, $k_0(\varepsilon)=\Lambda \mp \sqrt{\Lambda^2-\varepsilon}$ and, therefore, obtain four possibilities
\begin{enumerate}[]
    \item [(A)] $2\pi_1(s;\varepsilon)=+\left(s+2\sqrt{\Lambda^2-\varepsilon}\right), \quad k^{+}_0(\varepsilon)=\Lambda + \sqrt{\Lambda^2-\varepsilon},$
    \item [(B)] $2\pi_2(s;\varepsilon)=-\left(s+2\sqrt{\Lambda^2-\varepsilon}\right), \quad k^{+}_0(\varepsilon)=\Lambda + \sqrt{\Lambda^2+\varepsilon},$
    \item [(C)] $2\pi_3(s;\varepsilon)=+\left(s-2\sqrt{\Lambda^2-\varepsilon}\right), \quad k^{-}_0(\varepsilon)=\Lambda - \sqrt{\Lambda^2-\varepsilon},$
    \item [(D)] $2\pi_4(s;\varepsilon)=-\left(s-2\sqrt{\Lambda^2-\varepsilon}\right), \quad k^{-}_0(\varepsilon)=\Lambda - \sqrt{\Lambda^2-\varepsilon}.$
\end{enumerate}
Following Eqs. \eqref{phi} and \eqref{tau,sigmabar}, write
\begin{enumerate}[]
    \item [(A)] $\chi_1^2(s;\varepsilon) =e^{+s}s^{+2\sqrt{\Lambda^2-\varepsilon}}=\omega_1(s), \quad \psi_1(s;\varepsilon)=1+2\sqrt{\Lambda^2-\varepsilon}+s,$
    \item [(B)] $\chi_2^2(s;\varepsilon) =e^{-s}s^{-2\sqrt{\Lambda^2-\varepsilon}}=\omega_2(s), \quad \psi_2(s;\varepsilon)=1-2\sqrt{\Lambda^2-\varepsilon}-s,$
    \item [(C)] $\chi_3^2(s;\varepsilon) =e^{+s}s^{-2\sqrt{\Lambda^2-\varepsilon}}=\omega_3(s), \quad \psi_3(s;\varepsilon)=1-2\sqrt{\Lambda^2-\varepsilon}+s,$
    \item [(D)] $\chi_4^2(s;\varepsilon) =e^{-s}s^{+2\sqrt{\Lambda^2-\varepsilon}}=\omega_4(s), \quad \psi_4(s;\varepsilon)=1+2\sqrt{\Lambda^2-\varepsilon}-s.$
\end{enumerate}
Compare with Laguerre's HDE in Table \ref{table1} and choose HDE associated to case (D), $\alpha(\varepsilon) = 2\sqrt{\Lambda^2-\varepsilon}>0$ whenever $\varepsilon < \Lambda^2$, i.e, for the bound states region (remember shape of regions \ref{R1} and \ref{R2}). Hence, NU reduction is successful and Theorem \ref{eigenTheo} is applicable.

Corollary \ref{coroInt} is not applicable as $|\tau'(\overline{x})|$ is not bounded in $\mathbb{R}$ and $\widetilde{\omega}(\overline{x},\varepsilon) \equiv 1$. In any case, apply Theorem \ref{eigenTheo} to obtain pairs $(\Psi_n(x),E_n)$ defined by
\begin{gather}
    \Psi_n(x) = \mathcal{N}_n e^{-s(x)/2}s(x)^{\Lambda-n-\frac{1}{2}}L^{(2\Lambda-2n-1)}_n(s(x)), \quad s(x)=2 \Lambda be^{-ax},\label{eigenfuncMP}\\
    E_n=\dfrac{D_e}{\Lambda^2}\left(\Lambda^2-\left(1/2+n-\Lambda\right)^2\right), \label{eigenenerMP}
\end{gather}
for\footnote{Here, $\lfloor x \rfloor$ denotes largest integer smaller than $x$.} $n=0,1,\dots,\lfloor \Lambda-1/2\rfloor$. Again, in Eq. \eqref{eigenfuncMP}, $L_n^{(\alpha)}(s)$ correspond to the Laguerre polynomial that can be generated by Eq. \eqref{rogriguesFormula} plus the data available in Table \ref{table1}. As for the eigenenergies in Eq. \eqref{eigenenerMP}, they result from imposing associated Eq. \eqref{eigenEq} which translates to $\lambda(\varepsilon_n)=n$ for $n=0,1,\dots$, see Table \ref{table:morse}. Notice that they are finite: this condition appears by explicitly imposing $0< \varepsilon < \Lambda$ in the corresponding eigenenergy equation given in Lemma \ref{eigenvareps}. Nonetheless, at this point, they are but candidates to bound states: they solve the SE associated to Morse Potential given by Eq. \eqref{MP} and nothing more. 

However, using last row in Table \ref{table1}, compute
\begin{equation}
\label{intCondMP}
    \begin{split}
       &\int_{\mathbb{R}}\Psi^2_n(x)\mathrm{d}x = \int^{\infty}_{0}\Psi^2_n(s)\dfrac{\mathrm{d}s}{s} = \\
       & \quad \quad \quad \int^{\infty}_{0}e^{-s}s^{2\Lambda-2n-2}\left(L^{(2\Lambda-2n-1)}_n(s)\right)^2\mathrm{d}s=\dfrac{\Gamma(2\Lambda-n)}{n!(2\Lambda-2n-1)a}, 
    \end{split}
\end{equation}
which is finite by condition $n<\Lambda-1/2$ and, so, such solutions are actual bound states. In addition, there can be no more bound states. Indeed, comparing HDE given by previous Case (D) with CHE given by Eq. \eqref{confluentHyperEq}, find parameters
\begin{equation}
\label{parMP}
    2a(\varepsilon) = 2\sqrt{\Lambda^2-\varepsilon}+1-2\Lambda=-2\lambda(\varepsilon), \quad c(\varepsilon) = 1+2\sqrt{\Lambda^2-\varepsilon}>0.
\end{equation}
Following Subsection \ref{confluent}, find the two linearly independent solutions of such CHE. Define\footnote{This quantity is related to the asymptotic linear momentum at $x \to \infty$. Hence, the subscript.} $\varkappa_+(\varepsilon) \coloneqq \sqrt{\Lambda^2-\varepsilon}$. 

Firstly, assume $0 < 2\varkappa_+(\varepsilon) \notin \mathbb{Z}$. Then, $c(\varepsilon) \notin \mathbb{Z}$ and, by Proposition \ref{conlfuentProp} or Eq. \eqref{wro1c}, the two linearly independent solutions of the original SE, for each $0 < \varepsilon < \Lambda^2$, read
\begin{equation}
\label{wavesMP}
\begin{split}
&\Psi_{1}(s;\varepsilon) = e^{-s/2}s^{\varkappa_+(\varepsilon)}\hypergeometricsetup{symbol=\mathbf{F}}\pFq[skip=4]{1}{1}{a(\varepsilon)}{c(\varepsilon)}{s}, \\
&\Psi_{2}(s;\varepsilon) = e^{-s/2}s^{-\varkappa_+(\varepsilon)}\hypergeometricsetup{symbol=\mathbf{F}}\pFq[skip=4]{1}{1}{a(\varepsilon)-2\varkappa_+(\varepsilon)}{2-c(\varepsilon)}{s}.
\end{split}
\end{equation}
Assume further that $a(\varepsilon)$, $a(\varepsilon)-2\varkappa_+(\varepsilon) \neq 0,-1,\dots$. Then, use Eq. \eqref{inf1} and find
\begin{equation}
\label{asymptoticEigenMP}
    \Psi_1(s;\varepsilon) \sim \dfrac{1}{\Gamma(a(\varepsilon))}\dfrac{e^{s/2}}{s^{1/2+\Lambda}},\quad \Psi_2(s;\varepsilon) \sim \dfrac{1}{\Gamma(a(\varepsilon)-2\varkappa_+(\varepsilon))}\dfrac{e^{s/2}}{s^{1/2+\Lambda}},
\end{equation}
whenever $s\to\infty$. Hence, they are unbounded and not of integrable square for these particular $\varepsilon$. Now, assume, $a(\varepsilon_n) = -n=-\lambda(\varepsilon)$ for some $n=0,1,\dots$ Then, by initial assumption $2\varkappa_+(\varepsilon) \notin \mathbb{Z}$, $a(\varepsilon_n)-2\varkappa_+(\varepsilon_n) \notin \mathbb{Z}^{-}$: following Eq. \eqref{specialCase1Confluent}, $\Psi_1(s;\varepsilon_n)$ reduces to $\Psi_n(s)$ given by Eq. \eqref{eigenfuncMP} and $\Psi_2(s;\varepsilon_n)$ is unbounded by Eq. \eqref{asymptoticEigenMP} again. Finally, assume $a(\varepsilon_m)-2\varkappa_+(\varepsilon_m) = -m$ for some $m=0,1,\dots$ Similarly, $a(\varepsilon_m) \notin \mathbb{Z}^{-}$: $\Psi_1(s;\varepsilon_m)$ is unbounded by Eq. \eqref{asymptoticEigenMP} and $\Psi_2(s;\varepsilon_m)$ reduces, again, to $\Psi_m(s)$ given by Eq. \eqref{eigenfuncMP}. Both $a(\varepsilon)$ and $a(\varepsilon)-2\varkappa_+(\varepsilon)$ cannot be an integer simultaneously by assumption. Hence, case $2\varkappa_+(\varepsilon) \notin \mathbb{Z}$ is covered and no further bound states are obtained.

Secondly, assume $k=2\varkappa_+(\varepsilon_k) =1,2,\dots$ Hence, Proposition \ref{conlfuentProp} is no longer valid. However, $1+k=c(\varepsilon_k)>1$ and, so, $\Psi_1(s;\varepsilon_k)$ is still a solution. As for $\Psi_2(s;\varepsilon_k)$, it needs to be rewritten. Assume $a(\varepsilon_k) \neq 0,-1,\dots$, so, $\Psi_1(s;\varepsilon)$ is, again, not of integrable square by Eq. \eqref{asymptoticEigenMP}. Following Eq. \eqref{wro2c}, a second linearly independent solution reads
\begin{equation}
\label{waveMP2}
\Psi_{2}(s;\varepsilon_k) = e^{-s/2}s^{k/2}\hypergeometricsetup{symbol=G}\pFq[skip=4]{1}{1}{a(\varepsilon_k)}{1+k}{s},\quad a(\varepsilon_k)=\dfrac{k}{2}+\dfrac{1}{2}-\Lambda.
\end{equation}
Now, $c(\varepsilon_k) \ge 2$ and, by Eqs. \eqref{lzc2}, \eqref{lzc3} and \eqref{lzc4}: whenever $a(\varepsilon_k)=k-k'$ for $k'=0,1,\dots,k-1,$
\begin{align*}
    e^{s/2}\Psi_{2}(s;\varepsilon_k)&=O\!\left(s^{1-k/2}\right),
\end{align*}
whenever $s \to 0^{+}$. In addition, $a(\varepsilon_1)=1-\Lambda=1$ and, so, $\Lambda=0$ which is an absurd; case $k=1$ is excluded in this scenario. For the rest of them,
\begin{align*}
    e^{s/2}\Psi_{2}(s;\varepsilon_k) &= \dfrac{\Gamma(k)}{\Gamma\left(\frac{k+1}{2}-\Lambda\right)}s^{-k}+O\!\left(s^{1-k}\right), \quad k=2,3,\dots,\\
    e^{s/2}\Psi_{2}(s;\varepsilon_k) &= \dfrac{1}{\Gamma(1-\Lambda)}s^{-1/2}+O\!\left(s^{-1/2}\log{s}\right), \quad k=1,
\end{align*}
whenever $s \to 0^+$. It is straightforward to corroborate that, in every situation, $\Psi^2_{2}(s;\varepsilon_k) \sim e^{-s}s^{\alpha+1}$ as $s \to 0^{+}$ for some $\alpha \le -1$. Following Eq. \eqref{intCondMP}, $\Psi_2(s;\varepsilon_k)$ is not square-integrable. Hence, there are no more bound states whenever $a(\varepsilon_k) \neq 0,-1,\dots$ Finally, assume $a(\varepsilon_k) = -k'$ for $k'=0,1,\dots$ Then, again, $\Psi_1(s;\varepsilon_k)$ in Eq. \eqref{wavesMP} reduces to $\Psi_{k'}(s)$ given in Eq. \eqref{eigenfuncMP}. As for $\Psi_2(s;\varepsilon_k)$, it needs to be rewritten, again. Combining Eqs. \eqref{wro1c} and \eqref{wro5c},
\begin{equation}
\label{waveMP3}
\Psi_{2}(s;\varepsilon_k) = e^{s/2}s^{k/2}\hypergeometricsetup{symbol=G}\pFq[skip=4]{1}{1}{1+k-k'}{1+k}{e^{\pi i}s},
\end{equation}
is another linearly independent solution which is clearly not square-integrable by Eq. \eqref{inf2}. Hence, case $2\varkappa_+(\varepsilon) \in \mathbb{Z}$ is covered and no further bound states are obtained. The only bound states for the Morse Potential are given by Eqs. \eqref{eigenfuncMP} and \eqref{eigenenerMP}. Notice that this discussion arises from the impossibility of applying Corollary \ref{coroInt}.

Now, proceed further to the scattering states region, $\varepsilon > \Lambda^2$. In this region, both $a(\varepsilon)$ and $c(\varepsilon)$ in Eq. \eqref{parMP} have a non vanishing imaginary part. Hence, $\Psi_1(s;\varepsilon)$ and $\Psi_2(s;\varepsilon)$ given by Eq. \eqref{wavesMP} exist and are linearly independent. However, Eq. \eqref{asymptoticEigenMP} is still valid and, so, they are unbounded. Therefore, there are no possible scattering states and, hence, {\it observability} cannot be obtained: the space of states cannot constitute a COS for $L^{2}(\mathbb{R})$. This is mildly {\it traumatising} as it is a property needed to ``practice'' physics on the system. 

Although the potential is treated as one-dimensional following \cite[Table 4.1, pp. 296–297]{susy}, this implies that it cannot be fully understood as a physical system on its own. This suggests that we should instead consider the system as three-dimensional, which is commonly done. However, this one-dimensional approach does provide the correct bound energy levels. In any case, it simply constitutes an example of a physical system solved in terms of Laguerre polynomials.

\subsection{Rosen-Morse II Potential} Its dimensionless potential corresponds to \cite{rm}
\begin{equation}
	v(\overline{x}) =  v_{0} \cosh^{2}{\mu}\left(\tanh{\overline{x}} + \tanh{\mu} \right)^{2}, \quad \overline{x}\in \mathbb{R}.
	\label{rmpot}
\end{equation}
It satisfies $v_{\pm}=v_{0}e^{\pm 2 \mu},\, v_{\text{min}}=0,$ and $v_{\text{max}}=v_+$. There are two energy region according to Eigenvalue Problem \ref{schroProb}: bound states for $0 < \varepsilon < v_{0}e^{-2\mu}$ and, scattering states for $\varepsilon > v_{0}e^{-2\mu}$. In addition, scattering states can be further subdivided into reflecting states: $v_{0}e^{-2\mu} < \varepsilon < v_{0}e^{+2\mu}$; and free states: $\varepsilon > v_{0}e^{2\mu}$. We clarify their nature once the spectrum is solved.

\begin{table}[htpb!]
\centering
\begingroup
\renewcommand{\arraystretch}{1}
\begin{tabular}{r l}
\multirow{1}{*}{Rosen-Morse II Potential?}  & $v(\overline{x}) =  v_{0} \cosh^{2}{\mu}\left(\tanh{\overline{x}} + \tanh{\mu} \right)^{2}.$  \\ 
\midrule
\multirow{3}{*}{Potential, $v(x)$?}  & $v_{\text{min}}=0,$ \\ 
                    & $v_{\pm}=v_{0}e^{\pm 2 \mu},$  \\ 
                    & $v_{\text{max}}=v_+.$  \\ 
\midrule
\multirow{3}{*}{Energy Regions?} & Bound states: $\varepsilon \in (0,v_0e^{-2\mu})$,  \\ 
                                & Reflecting states: $\varepsilon \in (v_0e^{-2\mu},v_0e^{2\mu})$,  \\
                                & Free states: $\varepsilon \in (v_0e^{2\mu},\infty)$.  \\
\midrule
\multirow{2}{*}{Change of Variable?} & $\tau(\overline{x})=-\tanh{\overline{x}}, \; \tau'(\overline{x})=\tanh^2{\overline{x}}-1.$  \\ 
                                    & $\rho(s)=-\dfrac{1}{1-s^2}, \; \rho'(s)=-\dfrac{2s}{(1-s^2)^2}.$  \\
\midrule
\multirow{5}{*}{GHE?} & $\phi(s)=1-s^2,$ \\ 
                     & $\widetilde{\psi}(s)=-2s,$  \\ 
                     & $\widetilde{\phi}(s;\varepsilon)=\varepsilon-v_{0}\cosh^{2}{\mu}\left(s - \tanh{\mu}\right)^{2},$  \\ 
                     & $\widetilde{\omega}(s;\varepsilon)=1,$  \\
                     & $I=(-1,1).$  \\ 
\midrule
\multirow{9}{*}{HDE?} & $\varkappa_+(\varepsilon)=\sqrt{v_{+}-\varepsilon},\; \varkappa_-(\varepsilon)= \sqrt{v_{-}-\varepsilon},$ \\
                    & $2\mathrm{A}(\varepsilon)=\varkappa_+(\varepsilon) - \varkappa_-(\varepsilon),$ \\
                    & $2\mathrm{B}(\varepsilon)=\varkappa_+(\varepsilon) + \varkappa_-(\varepsilon).$ \\
                    &$\pi(s;\varepsilon)=\mathrm{A}(\varepsilon)-\mathrm{B}(\varepsilon)s,$  \\ 
                     & $\chi^2(s;\varepsilon)=\omega(s;\varepsilon),$ \\ 
                     & $\psi(s;\varepsilon)=2\mathrm{A}(\varepsilon)-2\left(\mathrm{B}(\varepsilon)+1\right)s,$ \\ 
                    &$2k(\varepsilon)=\varepsilon+v_0-\varkappa_+(\varepsilon) \varkappa_-(\varepsilon),$  \\ 
                     & $\omega(s;\varepsilon)=\left(1-s\right)^{\varkappa_-(\varepsilon)}\left(1+s\right)^{\varkappa_+(\varepsilon)}.$  \\ 
\midrule
Theorem \ref{eigenTheo}? & Applicable.  \\ 
\midrule
Corollary \ref{coroInt}? & Applicable.  \\ 
\midrule
\multirow{6}{*}{Bound States?}
                    & $v_1 = \frac{1}{2}v_0 \sinh{2 \mu}, \; v_2 = v_0\cosh^2{\mu}+\frac{1}{4},$ \\
                    & $B_n\mathrm{A}_n=v_1, \; B_n=\sqrt{v_2}-\left(n+\frac{1}{2}\right).$ \\
                    & $\begin{aligned}
                    \Psi_n(\overline{x}) &= \mathcal{N}_n e^{-\mathrm{A}_n \overline{x}} \text{sech}^{B_n}(\overline{x})\\
                    &\quad \times P_{n}^{(B_n-\mathrm{A}_n,\,B_n+\mathrm{A}_n)}\left(-\tanh{\overline{x}}\right),
                    \end{aligned}$ \\ 
                    & $\varepsilon_n=v_--(B_n-\mathrm{A}_n)^2,\; n=0,1,\dots,\lfloor N(v_0,\mu) \rfloor,$ \\
                    & $N(v_0,\mu) = \sqrt{v_2}-\sqrt{v_1}-\frac{1}{2},$  \\
                    & For $\mathcal{N}_n$, see \cite[Appendix B]{GORDILLONUNEZ2024134008}. \\
\midrule
\multirow{3}{*}{GHDE?} & $a(\varepsilon) = B(\varepsilon)+\frac{1}{2}-\sqrt{v_2},$ \\ 
&$b(\varepsilon) = B(\varepsilon)+\frac{1}{2}+\sqrt{v_2},$ \\ &$c(\varepsilon) = \varkappa_{+}(\varepsilon)+1.$  \\
\midrule
\multirow{4}{*}{Scattering States?} & For $v_-< \varepsilon < v_+$, $\Psi_{\varepsilon}(\overline{x}) \propto \Psi_1(\overline{x};\varepsilon),$ \\ 
& For $v_+< \varepsilon$, $\Psi_{\varepsilon}(\overline{x}) \propto \Psi_1(\overline{x};\varepsilon)+C\Psi_2(\overline{x};\varepsilon),$ \\ 
&$\psi_{1}(\overline{x};\varepsilon) = \dfrac{\text{sech}^{b(\varepsilon)}{\overline{x}}}{e^{a(\varepsilon) \overline{x}}}  \hypergeometricsetup{symbol=F, fences=brack}\pFq[skip=4]{2}{1}{\displaystyle a(\varepsilon),\mathrm{b}(\varepsilon)}{\displaystyle c(\varepsilon)}{t(\overline{x})},$  \\ 
&$\psi_{2}(\overline{x};\varepsilon) = \dfrac{e^{a(\varepsilon) \overline{x}}}{\text{sech}^{b(\varepsilon)}{\overline{x}}}  \hypergeometricsetup{symbol=F, fences=brack}\pFq[skip=4]{2}{1}{\displaystyle a'(\varepsilon),b'(\varepsilon)}{\displaystyle c'(\varepsilon)}{t(\overline{x})},$  \\
\bottomrule
\end{tabular}
\caption{NU method for the Rosen-Morse II Potential.}
\label{table:rosenmorseII}
\endgroup
\end{table}

\noindent \textbf{Application of the method: Table \ref{table:rosenmorseII}.} Set $\tau(\overline{x})=-\tanh{\overline{x}}$. Obtain its corresponding GHE, see Table \ref{table:rosenmorseII}.

Apply NU reduction, follow Section \ref{reduction}. Polynomial $\pi(s;\varepsilon)$ solves
\begin{equation}
\label{pi-eq-rmii}
    \pi^2(s;\varepsilon)+[\widetilde{\phi}(s;\varepsilon)-k(\varepsilon)\phi(s;\varepsilon)]=0.
\end{equation}
Define 
\begin{equation*}
v_1 \coloneqq \frac{1}{2}v_0 \sinh{2 \mu}, \quad v_2 \coloneqq v_0\cosh^2{\mu}+\frac{1}{4}, \quad \varkappa_{\pm}(\varepsilon) \coloneqq \sqrt{v_{\pm}-\varepsilon}>0.
\end{equation*}
Hence, introducing $\pi(s;\varepsilon)=\mathrm{A}(\varepsilon)-\mathrm{B}(\varepsilon)s$ in Eq. \eqref{pi-eq-rmii}, write
\begin{equation}
\label{sys-abk}
\mathrm{A}(\varepsilon)^2-k(\varepsilon)=v_2+\dfrac{1}{4}-\varepsilon-v_0,\quad \mathrm{A}(\varepsilon)\mathrm{B}(\varepsilon)=v_1,\quad \mathrm{B}(\varepsilon)^2+k(\varepsilon)=v_2-\dfrac{1}{4}.
\end{equation}
Obtain, firstly,
\begin{equation*}
    \begin{aligned}
        2A_1(\varepsilon)=\varkappa_+&(\varepsilon) - \varkappa_-(\varepsilon), \quad 2B_1(\varepsilon)=\varkappa_+(\varepsilon) + \varkappa_-(\varepsilon), \\
        &2k_1(\varepsilon)= \varepsilon+v_0-\varkappa_+(\varepsilon) \varkappa_-(\varepsilon).
    \end{aligned}
\end{equation*}
From it, the remaining three possibilities are obtained as a direct consequence of two symmetries: $(i)$ Eq.
\eqref{pi-eq-rmii}  is invariant under the transformation $\pi(s;\varepsilon) \mapsto -\pi(s;\varepsilon)$ for a fixed $k(\varepsilon)$; and
$(ii)$ the system of Eqs. \eqref{sys-abk} is also invariant if we interchange $\mathrm{A}(\varepsilon)$ and $\mathrm{B}(\varepsilon)$ and simultaneously replace $k(\varepsilon)$ by $-k(\varepsilon)+v_0+\varepsilon$. However, by Lemma \ref{easesChoiceLemma}, we discard them (and drop the subscript), see Table \ref{table:rosenmorseII}: NU reduction is successful. 

Comparing with Jacobi's HDE in Table \ref{table1}, $\alpha(\varepsilon)=\varkappa_{-}(\varepsilon)$ and $\beta(\varepsilon)=\varkappa_{+}(\varepsilon)$. For $\varepsilon < v_-$, $\alpha(\varepsilon),\,\beta(\varepsilon) > 0$ and, therefore, Theorem \ref{eigenTheo} is applicable. Finally, $\tau'(\overline{x})=\tanh^2{\overline{x}}-1$, which is bounded, and so, Corollary \ref{coroInt} is applicable: find bound states in Table \ref{table:rosenmorseII}. To find $\varepsilon_n$, obtain, firstly, $\mathrm{B}_n \coloneqq \mathrm{B}(\varepsilon_n)$ by combining last equality in Eq. \eqref{sys-abk} with the associated eigenenergy equation
$$k_n-\mathrm{B}_n = n(n+2\mathrm{B}_n+1), \quad k_n \coloneqq k(\varepsilon_n).$$

Finally, proceed into the scattering states, $\varepsilon > v_-$. Set $s=-1+2t$ and transform previous Jacobi's HDE into a GHDE, see Eq. \eqref{hyperEq}, where $t \in (0,1)$ and the parameters are
\begin{equation}
\begin{split}
a(\varepsilon) = B(\varepsilon)+\frac{1}{2}-\sqrt{v_2},& \quad b(\varepsilon) = B(\varepsilon)+\frac{1}{2}+\sqrt{v_2}, \\ 
c(\varepsilon) = A(\varepsilon)+B(\varepsilon)&+1=\varkappa_{+}(\varepsilon)+1.
\label{abgGausshdeMF}
\end{split}
\end{equation}
Following Subsection \ref{gauss}, construct the two linearly independent solutions. 

Firstly, set $\varepsilon<v_+$ and, so, $c(\varepsilon) > 0$. Assume $\varkappa_+(\varepsilon) \notin \mathbb{Z}$. Then, $c(\varepsilon) \notin \mathbb{Z}$ and, by Proposition \ref{gaussProp}, the two linearly independent solutions of the original SE read
\begin{equation}   
\begin{split}
\Psi_1(t;\varepsilon) & = \left(1-t\right)^{\frac{i}{2}\left|\varkappa_-(\varepsilon)\right|}t^{+\frac{1}{2}\varkappa_+(\varepsilon)}\pFq[skip=4]{2}{1}{a(\varepsilon),b(\varepsilon)}{c(\varepsilon)}{t}, \\
\Psi_2(t;\varepsilon) & = \left(1-t\right)^{\frac{i}{2}\left|\varkappa_-(\varepsilon)\right|}t^{-\frac{1}{2}\varkappa_+(\varepsilon)}\pFq[skip=4]{2}{1}{a'(\varepsilon),b'(\varepsilon)}{c'(\varepsilon)}{t},
\label{posWaveFunctionR2}
\end{split}
\end{equation}
where, as usual, see Eq. \eqref{primeParameters}, $a'(\varepsilon)=a(\varepsilon)-c(\varepsilon)+1,\,b'(\varepsilon)=b(\varepsilon)-c(\varepsilon)+1$, $c'(\varepsilon)=2-c(\varepsilon)$, and hypergeometric functions are understood as series, see Eq. \eqref{hyperSeries}. In addition, $$c(\varepsilon)-a(\varepsilon)-b(\varepsilon) = c'(\varepsilon)-a'(\varepsilon)-b'(\varepsilon) = A(\varepsilon)-B(\varepsilon)=-i\left|\varkappa_-(\varepsilon)\right|,$$
and, so, by Eq. \eqref{lfz13}, both $\Psi_1(t;\varepsilon)$ and $\Psi_2(t;\varepsilon)$ are bounded at $t=1^-$. However, $\Psi_2(t;\varepsilon)$ is clearly unbounded at $t=0^+$. Now, assume $\varkappa_{+}(\varepsilon_m)=m$ for some $m=1,2,\dots$ Then, $\Psi_2(t;\varepsilon)$ needs to be rewritten. Accordingly to \cite[\S21.4, page 277]{Nikiforov1988SpecialFO}, since, $c(\varepsilon_m)$ is a positive integer,
there are two possibilities:
\begin{itemize}
\item[(A)] $a(\varepsilon_m)$,  $b(\varepsilon_m)$, and $a(\varepsilon_m)+b(\varepsilon_m)$ are not integers, or
\item[(B)] one of them is an integer.
\end{itemize}
However, it is straightforward to check that $\Im(a(\varepsilon_m))\neq0$, $\Im(b(\varepsilon_m))\neq0$, and $\Im(a(\varepsilon_m)+b(\varepsilon_m))\neq0$ for every $m=1,2,\dots$ Hence, the second solution of the associated GHDE is given by, see \cite[\S21.4, page 277]{Nikiforov1988SpecialFO}, $$y_A(t;\varepsilon_m)=\pFq[skip=4]{2}{1}{a(\varepsilon_m),b(\varepsilon_m)}{a(\varepsilon_m)+b(\varepsilon_m)-m}{1-t},$$ 
and, so,
$$
\Psi_A(t;\varepsilon_m)=\left(1-t\right)^{\frac{i}{2}\left|\varkappa_-(\varepsilon_m)\right|}t^{+\frac{1}{2}\varkappa_+(\varepsilon_m)}
 y_A(t;\varepsilon_m)
$$
is a second linearly independent solution of the original SE. However, accordingly to Eq. \eqref{lfz14},
$$
y_A(t;\varepsilon_m) \sim \frac{\Gamma(\alpha+\beta-m)\Gamma(m)}{\Gamma(\alpha)\Gamma(\beta)} t^{-m} \text{ as } t \to 0^{+},
$$
from where it follows that $\psi_A(s;\varepsilon_m)$ is unbounded at $t=0^{+}$. Hence, the only bounded solution for $\varepsilon < v_+$ is $\Psi_1(t;\varepsilon)$ in Eq. \eqref{posWaveFunctionR2}.

Secondly, set $\varepsilon > v_+$. In that case $\Im{\left(c(\varepsilon)\right)} \neq 0$ and, so, by Proposition \ref{gaussProp}, the two linearly independent solutions of the original SE read 
\begin{equation*}   
\begin{split}
\Psi_1(t;\varepsilon) & = \left(1-t\right)^{\frac{i}{2}\left|\varkappa_-(\varepsilon)\right|}t^{+\frac{i}{2}\left|\varkappa_+(\varepsilon)\right|}\pFq[skip=4]{2}{1}{a(\varepsilon),b(\varepsilon)}{c(\varepsilon)}{t}, \\
\Psi_2(t;\varepsilon) & = \left(1-t\right)^{\frac{i}{2}\left|\varkappa_-(\varepsilon)\right|}t^{-\frac{i}{2}\left|\varkappa_+(\varepsilon)\right|}\pFq[skip=4]{2}{1}{a'(\varepsilon),b'(\varepsilon)}{c'(\varepsilon)}{t}.
\end{split}
\end{equation*}
Following similar reasoning, it is straightforward to observe that, in this case, both solutions are bounded. Therefore, for $\varepsilon > v_+$, the eigenvalues are degenerate. See Table \ref{table:rosenmorseII} for the explicit formulas of these functions in terms of the variable $x$.

Notice that, in terms of degeneracy, the two subregions into which we subdivided the scattering region \(\varepsilon > v_-\) behave differently. Whenever \(\varepsilon < v_+\), the hypothetical particle subjected to the potential cannot (classically) reach \(+\infty\). In terms of interpretation, once we leave the bound states region, the solution can no longer be understood as a classical particle but rather as a beam of particles that starts at \( -\infty \) or \( +\infty \) and interacts with the potential through reflection and transmission.

By analyzing the asymptotics of the solutions \(\Psi_1(x;\varepsilon)\) and \(\Psi_2(x;\varepsilon)\), one can see that \(\Psi_1(x;\varepsilon)\) corresponds to a beam of particles starting at \( -\infty \), while \(\Psi_2(x;\varepsilon)\) contains parts of the solution corresponding to beams of particles starting at both \( +\infty \) and \( -\infty \). From this, we can construct another solution, \(\Psi_3(x;\varepsilon)\), which corresponds to a beam of particles starting at \( +\infty \) and propagating to \( -\infty \).

In this context, we observe the difference in degeneracy: whenever \(\varepsilon < v_+\), a beam of particles cannot propagate from \( +\infty \) (its energy is lower than the potential) to \( -\infty \). Hence, the degeneracy.

These aspects, along with the derivation of the orthogonality and completeness relations that justify the observability of the system, and the fact that the space of states constitutes a complete orthonormal system (COS) for \(L^2(\mathbb{R})\), are addressed in \cite[Section 2.5, pp. 17-20]{GORDILLONUNEZ2024134008}.

\section{Conclusions}
\label{conclusions}
In many papers, the Nikiforov-Uvarov method is mainly seen as the reduction from the GHE to the HDE, as we described in Section \ref{reduction}. Here, we aimed to go beyond that and review not only this reduction, but also the specific results that lead to solutions in terms of (old) Classical OPs and hypergeometric functions. We also looked at the results that show when the solutions are square-integrable: Theorem \ref{eigenTheo} plays an important role in this process, but in some cases, like the Morse potential in Subsection \ref{morsePot}, Corollary \ref{coroInt} does not apply. Despite that, Classical OPs generally describe the discrete bound states, while the scattering states are continuous and represented by hypergeometric functions. This could be because bound states in these simple systems are best described by the simplest mathematical objects, like orthogonal polynomials. It is also important to note the discussion in Subsection \ref{morsePot}, especially after Corollary \ref{coroInt} could not be used. The discussion in Subsection \ref{boundstates} helps explain why. Another point is that the reduction itself highlights some physically important quantities. For example, in both the Morse and Rosen-Morse potentials, many parameters are related to $\varkappa_{\pm}(\varepsilon)$, which corresponds to the asymptotic linear momenta at $\pm \infty$.

This method provides a clear and systematic approach to solving these problems, which could also be unified with computer algebra systems. For more on this, see, again, \cite{lineellis}, where they explore many classical systems solved with this method. There are also other quantum systems (both relativistic and non-relativistic) solved using this approach, as shown in \cite{ap2}, \cite{ap5}, \cite{ap4}, \cite{ap6}, \cite{ap3}, \cite{ap1} and references therein. While Corollary \ref{coroInt} may not always apply, we believe the results are still correct, even if the reasoning is slightly different. Finally, the Nikiforov-Uvarov method can also be used to solve problems where square integrability is not required. For examples of such applications, see \cite{aps2} or \cite{aps1}.

\appendix
\renewcommand{\theequation}{A\arabic{equation}}
\setcounter{equation}{0}

\section{Some auxiliary results}
For analytic continuation of the solutions of an equation, we rely on the following theorem on the analyticity of the integral depending on a parameter (see \cite[Theorem 2, \S3, p. 13]{Nikiforov1988SpecialFO}):
\begin{theorem}
\label{theoA.1}
Assume that $C$ is a piecewise smooth curve of finite length and $f: \mathcal{O} \coloneqq C \times \Omega \subset \mathbb{C} \times \mathbb{C} \to \mathbb{C}$ satisfying that $f(s,z) \in C(\mathcal{O})$, $f(s_0,z) \in \mathcal{H}(\Omega)$, for every $s_0 \in C$, then, $F(z)$ given by
$$
F(z) = \int_{C}f(s,z)\mathrm{d}s,
$$
satisfies that $F(z) \in \mathcal{H}(\Omega)$, and
$$
F^{(n)}(z) = \int_{C} \frac{\partial^{n}f}{\partial z^{n}}(s,z)\mathrm{d}s.
$$
The conclusion remains valid for uniformly convergent improper integrals $F(z)$.
\end{theorem}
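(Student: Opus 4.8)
The plan is to establish holomorphy of $F$ via Morera's theorem and then read off the derivative formula from Cauchy's integral formula, in each case pushing the problem onto the slices $f(s_0,\cdot)$, which are holomorphic by hypothesis. First I would check that $F$ is continuous on $\Omega$: fixing a compact $K\subset\Omega$, the function $f$ is uniformly continuous on the compact set $C\times K$ (here the finite length of $C$ matters), so $|F(z)-F(z')|\le (\text{length of }C)\cdot\sup_{s\in C}|f(s,z)-f(s,z')|\to 0$ as $z'\to z$ within $K$; hence $F\in C(\Omega)$.

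Next, to obtain $F\in\mathcal H(\Omega)$, I would invoke Morera. Let $T$ be any closed triangle contained together with its interior in $\Omega$. Then
$$\oint_{\partial T}F(z)\,\mathrm{d}z=\oint_{\partial T}\!\left(\int_{C}f(s,z)\,\mathrm{d}s\right)\mathrm{d}z=\int_{C}\!\left(\oint_{\partial T}f(s,z)\,\mathrm{d}z\right)\mathrm{d}s=\int_{C}0\,\mathrm{d}s=0,$$
where the interchange of integrations is Fubini's theorem — $f$ is continuous, hence bounded and jointly measurable, on the compact product of the rectifiable curve $C$ with the closed path $\partial T$ — and the inner contour integral vanishes by Cauchy's theorem applied to the holomorphic slice $f(s,\cdot)$. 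Since $F$ is continuous on $\Omega$ and integrates to zero over every triangle, Morera's theorem gives $F\in\mathcal H(\Omega)$.

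For the derivative formula, I would fix $z_0\in\Omega$ and $r>0$ with $\overline{D(z_0,r)}\subset\Omega$, and apply Cauchy's integral formula to $F$:
$$F^{(n)}(z_0)=\frac{n!}{2\pi i}\oint_{|w-z_0|=r}\frac{F(w)}{(w-z_0)^{n+1}}\,\mathrm{d}w=\frac{n!}{2\pi i}\oint_{|w-z_0|=r}\frac{1}{(w-z_0)^{n+1}}\!\left(\int_{C}f(s,w)\,\mathrm{d}s\right)\mathrm{d}w.$$
Interchanging the two integrations once more (same justification) and recognising the inner integral via Cauchy's formula for $f(s,\cdot)$ gives
$$F^{(n)}(z_0)=\int_{C}\!\left(\frac{n!}{2\pi i}\oint_{|w-z_0|=r}\frac{f(s,w)}{(w-z_0)^{n+1}}\,\mathrm{d}w\right)\mathrm{d}s=\int_{C}\frac{\partial^{n}f}{\partial z^{n}}(s,z_0)\,\mathrm{d}s,$$
which is the claimed identity. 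Finally, for a uniformly convergent improper integral I would write $F=\lim_{k}F_k$, where $F_k$ is the proper integral of $f$ over an exhausting sequence of compact subcurves; each $F_k\in\mathcal H(\Omega)$ by the above, the convergence is locally uniform on $\Omega$, so Weierstrass's theorem yields $F\in\mathcal H(\Omega)$ and $F_k^{(n)}\to F^{(n)}$ locally uniformly, passing the derivative formula to the limit.

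The main obstacle is entirely technical: carefully justifying the two applications of Fubini — joint measurability and integrability of $f$ on the product of $C$ (or a compact subcurve) with the boundary path $\partial T$ or the circle $|w-z_0|=r$ — which the continuity of $f$ on $\mathcal O$ and the finite length of $C$ make routine; and in the improper case, the uniform-convergence hypothesis is precisely what licenses commuting the limit with integration and differentiation.
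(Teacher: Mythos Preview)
Your argument is correct and standard: Morera's theorem combined with Fubini gives holomorphy of $F$, Cauchy's integral formula with another Fubini swap gives the derivative identity, and Weierstrass's theorem handles the improper case via locally uniform limits of proper approximants. The technical points you flag (joint continuity on compact products, finite length of $C$, uniform convergence licensing the limit interchange) are the right ones and are indeed routine here.

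As for comparison with the paper: there is nothing to compare. The paper does not prove this theorem; it states it in the appendix and refers the reader to \cite[Theorem~2, \S3, p.~13]{Nikiforov1988SpecialFO} for the argument. Your proof therefore supplies what the paper omits, and the approach you take --- Morera plus Cauchy plus Weierstrass --- is essentially the classical one that Nikiforov and Uvarov present in the cited reference.
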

Finally, we include some basic properties of the hypergeometric functions derived in Subsection \ref{hyperFuncs}. These results are detailed in \cite{NIST} and presented for the sake of completeness.
\subsection{Basic Properties of Hypergeometric Functions\label{basicProp}}
\subsubsection{Hypergeometric Function}We present some basic properties for the hypergeometric function ${}_2F_1(a,b,c;z)$ introduced in Subsection \ref{gauss}:

\begin{custom}{Limiting Form as $z \to 1^{-}$ (\cite[\S 15.4(ii)]{NIST})}
If $\Re{\left(c-a-b\right)>0}$,
\begin{equation*}
\label{lfz11}
    \pFq[skip=4]{2}{1}{a,b}{c}{1}=\dfrac{\Gamma(c)\Gamma(c-a-b)}{\Gamma(c-a)\Gamma(c-b)}.
\end{equation*}
If $c=a+b$,
\begin{equation*}
\label{lfz12}
    \lim_{z \to 1^-}\dfrac{1}{-\log{\left(1-z\right)}}\pFq[skip=4]{2}{1}{a,b}{a+b}{z}=\dfrac{\Gamma(a+b)}{\Gamma(a)\Gamma(b)}.
\end{equation*}
If $\Re(c-a-b)=0$ but $c \neq a+b$,
\begin{equation}
\label{lfz13}
    \lim_{z \to 1^-}\left(1-z\right)^{a+b-c}\left(\pFq[skip=4]{2}{1}{a,b}{c}{z}-\dfrac{\Gamma(c)\Gamma(c-a-b)}{\Gamma(c-a)\Gamma(c-b)}\right)=\dfrac{\Gamma(c)\Gamma(a+b-c)}{\Gamma(a)\Gamma(b)}.
\end{equation}
Finally, if $\Re(c-a-b)<0$,
\begin{equation}
\label{lfz14}
    \lim_{z \to 1^-} \dfrac{1}{(1-z)^{c-a-b}}\pFq[skip=4]{2}{1}{a,b}{c}{z} = \dfrac{\Gamma(c)\Gamma(a+b-c)}{\Gamma(a)\Gamma(b)}.
\end{equation}
\end{custom}
\subsubsection{Confluent Hypergeometric Function}
We present some basic properties for the confluent hypergeometric functions ${}_1F_1(a,c;z)$, ${}_1\mathbf{F}_1(a,c;z)$ and ${}_1G_1(a,c;z)$ introduced in Subsection \ref{confluent}:

\begin{custom}{Limiting Form at $\mathbf{0}$ (\cite[\S 13.2(iii)]{NIST})}
\begin{equation*}
    \hypergeometricsetup{symbol=F}\pFq[skip=4]{1}{1}{a}{c}{z}=1+O(z). \label{lzc0}
\end{equation*}
Whenever $a = -n$ or $-n+c+1$ for a nonnegative integer $n$,
\begin{align}
    \hypergeometricsetup{symbol=G}\pFq[skip=4]{1}{1}{-n}{c}{z}&=(-1)^n(c)_n+O(z), \nonumber\\
    \hypergeometricsetup{symbol=G}\pFq[skip=4]{1}{1}{-n+c-1}{c}{z}&=(-1)^n\left(2-c\right)_nz^{1-c}+O\!\left(z^{2-b}\right). \label{lzc2}
\end{align}
For the rest of cases,
\begin{align}
    \hypergeometricsetup{symbol=G}\pFq[skip=4]{1}{1}{a}{c}{z} &= \dfrac{\Gamma(c-1)}{\Gamma(a)}z^{1-c}+O\!\left(z^{2-\Re\left(c\right)}\right), \quad \Re(c) \ge 2,\; c\neq2, \label{lzc3}\\
    \hypergeometricsetup{symbol=G}\pFq[skip=4]{1}{1}{a}{2}{z} &= \dfrac{1}{\Gamma(a)}z^{-1}+O\!\left(\log{z}\right), \label{lzc4} \\
    \hypergeometricsetup{symbol=G}\pFq[skip=4]{1}{1}{a}{c}{z} &= \dfrac{\Gamma(c-1)}{\Gamma(a)}z^{1-c}+\dfrac{\Gamma(1-c)}{\Gamma(a-c+1)}+O\!\left(z^{2-\Re\left(c\right)}\right), \quad 1 < \Re(c) < 2, \nonumber\\
    \hypergeometricsetup{symbol=G}\pFq[skip=4]{1}{1}{a}{1}{z} &= -\dfrac{1}{\Gamma(a)}\left(\log{z}+\psi(a)+2\gamma\right)+O\!\left(z\log{z}\right), \nonumber\\
    \hypergeometricsetup{symbol=G}\pFq[skip=4]{1}{1}{a}{c}{z} &= \dfrac{\Gamma(1-c)}{\Gamma(a)}+O\!\left(z^{1-\Re\left(c\right)}\right), \quad 0 < \Re(c) < 1, \nonumber\\
    \hypergeometricsetup{symbol=G}\pFq[skip=4]{1}{1}{a}{0}{z} &= \dfrac{1}{\Gamma(a+1)}+O\!\left(z\log{z}\right), \nonumber \\\hypergeometricsetup{symbol=G}\pFq[skip=4]{1}{1}{a}{c}{z} &= \dfrac{\Gamma(1-c)}{\Gamma(a-c+1)}+O\!\left(z\right), \quad \Re(c) \le 0,\;c \neq 0. \nonumber
\end{align}
\end{custom}
\begin{custom}{Limiting Form at Infinity (\cite[\S 13.2(iv)]{NIST})} Except when $a=0,-1,\dots$, which corresponds to the polynomial cases,
\begin{equation}
\label{inf1}
\hypergeometricsetup{symbol=\mathbf{F}}\pFq[skip=4]{1}{1}{a}{c}{z} \sim\dfrac{e^zz^{a-c}}{\Gamma(a)}, \quad \left|\text{arg}(z)\right| \le \dfrac{1}{2}\pi-\delta, \quad z \to \infty.
\end{equation}
As for ${}_1G_1(a,c;z)$,
\begin{equation}
\label{inf2}
\hypergeometricsetup{symbol=G}\pFq[skip=4]{1}{1}{a}{c}{z} \sim z^{-a}, \quad \left|\text{arg}(z)\right| \le \dfrac{3}{2}\pi-\delta, \quad z \to \infty,
\end{equation}
without restrictions on the parameters. In both cases, $\delta$ is an arbitrarily small positive constant.
\end{custom}
\begin{custom}{Wronskians (\cite[\S 13.2(vi)]{NIST})}
\begin{align}
    W\!\left\{\hypergeometricsetup{symbol=\mathbf{F}}\pFq[skip=4]{1}{1}{a}{c}{z},z^{1-c}\hypergeometricsetup{symbol=\mathbf{F}}\pFq[skip=4]{1}{1}{a-c+1}{2-c}{z}\right\}\! &= \dfrac{\sin{(\pi c)}z^{-c}e^z}{\pi}, \label{wro1c}\\
    W\!\left\{\hypergeometricsetup{symbol=\mathbf{F}}\pFq[skip=4]{1}{1}{a}{c}{z},\hypergeometricsetup{symbol=\mathbf{G}}\pFq[skip=4]{1}{1}{a}{c}{z}\right\}\! &= \dfrac{-z^{-c}e^z}{\Gamma(a)},\label{wro2c}\\
    W\!\left\{\hypergeometricsetup{symbol=\mathbf{F}}\pFq[skip=4]{1}{1}{a}{c}{z},e^z\hypergeometricsetup{symbol=\mathbf{G}}\pFq[skip=4]{1}{1}{c-a}{c}{e^{\pm\pi i}z}\right\}\! &= \dfrac{e^{\mp\pi i}z^{-c}e^z}{\Gamma(c-a)}, \nonumber \\
    W\!\left\{z^{1-c}\hypergeometricsetup{symbol=\mathbf{F}}\pFq[skip=4]{1}{1}{a-c+1}{2-c}{z},\hypergeometricsetup{symbol=\mathbf{G}}\pFq[skip=4]{1}{1}{a}{c}{z}\right\}\! &= \dfrac{-z^{-c}e^z}{\Gamma(a-c+1)}, \nonumber \\
    W\!\left\{z^{1-c}\hypergeometricsetup{symbol=\mathbf{F}}\pFq[skip=4]{1}{1}{a-c+1}{2-c}{z},e^z\hypergeometricsetup{symbol=\mathbf{G}}\pFq[skip=4]{1}{1}{c-a}{c}{e^{\pm\pi i}z}\right\}\! &= \dfrac{-z^{-c}e^z}{\Gamma(1-a)}, \label{wro5c}\\
    W\!\left\{\hypergeometricsetup{symbol=\mathbf{G}}\pFq[skip=4]{1}{1}{a}{c}{z},e^z\hypergeometricsetup{symbol=\mathbf{G}}\pFq[skip=4]{1}{1}{c-a}{c}{e^{\pm\pi i}z}\right\}\! &= e^{\pm(a-c)\pi i}z^{-c}e^z. \nonumber
\end{align}
\end{custom}
\begin{custom}{Relations to Other Functions (\cite[\S 13.6(v)]{NIST})} For particular values of the parameters, the series associated to ${}_1G_1(a,c;z)$ and ${}_1F_1(a,c;z)$ are terminating and reduce to OPs. Some special cases are the following:
\begin{equation}
\label{specialCase1Confluent}
    \hypergeometricsetup{symbol=G}\pFq[skip=4]{1}{1}{-n}{\alpha+1}{z}=(-1)^n(\alpha+1)_n\hypergeometricsetup{symbol=F}\pFq[skip=4]{1}{1}{-n}{\alpha+1}{z}=(-1)^nn!L^{(\alpha)}_n(z).
\end{equation}
\end{custom}

\bibliographystyle{plain}
\bibliography{main} 

\begin{thebibliography}{10}

\bibitem{ap2}
D.~E. Alvarez-Castillo and M.~Kirchbach.
\newblock Exact spectrum and wave functions of the hyperbolic {Scarf} potential in terms of finite {Romanovski} polynomials.
\newblock {\em Revista Mexicana de Fisica}, 57:385--392, 2011.

\bibitem{ap5}
C.~Berkdemir, A.~Berkdemir, and R.~Sever.
\newblock Systematical approach to the exact solution of the {Dirac} equation for a deformed form of the {Woods–Saxon} potential.
\newblock {\em J. Phys. A: Math. Gen.}, 39:13455, 2006.

\bibitem{castillo2021}
K.~Castillo and J.~Petronilho.
\newblock Classical orthogonal polynomials revisited.
\newblock {\em Results in Mathematics}, 78:155, 2023.

\bibitem{castillo2024first}
K.~Castillo and J.C. Petronilho.
\newblock {\em A First Course on Orthogonal Polynomials: Classical Orthogonal Polynomials and Related Topics}.
\newblock C\&H/CRC Press, 2024.

\bibitem{susy}
F.~Cooper, A.~Khare, and U.~Sukhatme.
\newblock Supersymmetry and quantum mechanics.
\newblock {\em Physics Reports}, 251:267--388, 1995.

\bibitem{morse}
Jens~Peder Dahl and Michael Springborg.
\newblock The {Morse} {Oscillator} in position space, momentum space, and phase space.
\newblock {\em The Journal of Chemical Physics}, 88(7):4535--4547, 1988.

\bibitem{ap4}
Harun Egrifes and Ramazan Sever.
\newblock Bound-state solutions of the {Klein-Gordon} equation for the generalized {PT}-symmetric {Hulthén} potential.
\newblock {\em International Journal of Theoretical Physics}, 46(4):935--944, 2007.

\bibitem{lineellis}
Lina Ellis, Ikumi Ellis, Christoph Koutschan, and Sergei~K. Suslov.
\newblock On {Potentials} {Integrated} by the {Nikiforov}-{Uvarov} method.
\newblock Technical Report RICAM-Report 2023-11, Research Institute for Symbolic Computation (RICAM), 2023.

\bibitem{GORDILLONUNEZ2024134008}
Guillermo Gordillo-Núñez, Renato Alvarez-Nodarse, and Niurka~R. Quintero.
\newblock The complete solution of the {Schrödinger} equation with the {Rosen–Morse} type potential via the {Nikiforov–Uvarov} method.
\newblock {\em Physica D: Nonlinear Phenomena}, 458:134008, 2024.

\bibitem{aps2}
Hossein Heidari, Hossein Motavalli, and Mehdi~Rezaei Keramati.
\newblock Exact solutions of {Fokker–Planck} equation via the {Nikiforov–Uvarov} method.
\newblock {\em Indian Journal of Physics}, 95(11):2523--2529, 2021.

\bibitem{ap6}
S.~M. Ikhdair.
\newblock Approximate solutions of the {Dirac} equation for the {Rosen-Morse} potential including the spin-orbit centrifugal term.
\newblock {\em Journal of Mathematical Physics}, 51(2):023525--16, 2010.

\bibitem{aps1}
Hesham Mansour and Ahmad Gamal.
\newblock Meson spectra using {Nikiforov-Uvarov} method.
\newblock {\em Results in Physics}, 33:105203, 2022.

\bibitem{ap3}
Hossein Motavalli and Amin~Rezaei Akbarieh.
\newblock Exact solutions of the {Klein-Gordon} equation for the {Rosen-Morse} type potentials via {Nikiforov-Uvarov} method.
\newblock {\em Modern Physics Letters A}, 23(35):3005–3013, 2008.

\bibitem{ap1}
Hossein Motavalli and Amin~Rezaei Akbarieh.
\newblock Exact solutions of the {Klein-Gordon} equation for the {Scarf}-type potential via {Nikiforov-Uvarov} method.
\newblock {\em International Journal of Theoretical Physics}, 49:379–389, 2010.

\bibitem{Nikiforov1988SpecialFO}
Arnold~F. Nikiforov and V.~B. Uvarov.
\newblock {\em Special Functions of Mathematical Physics: A Unified Introduction with Applications}.
\newblock Springer Basel AG, 1988.

\bibitem{NIST}
F.~W.~J. Olver, D.~W. Lozier, R.~F. Boisvert, and C.~W. Clark.
\newblock {\em NIST Digital Library of Mathematical Functions}.
\newblock Cambridge University Press, New York, 2024.

\bibitem{rm}
N.~Rosen and Philip~M. Morse.
\newblock On the vibrations of polyatomic molecules.
\newblock {\em Phys. Rev.}, 42(2):210--217, Oct 1932.

\bibitem{Takhtajan2008QuantumMF}
Leon~A. Takhtajan.
\newblock {\em Quantum Mechanics for Mathematicians}.
\newblock American Mathematical Society, Providence, Rhode Island, 2008.

\end{thebibliography}
\section*{Acknowledgements}
Author wishes to express his gratitude to his advisor, Kenier Castillo, for suggesting this topic for the survey, the continuous encouragement and constructive guidance. This work is partially supported by the Centre for Mathematics of the University of Coimbra funded by the Portuguese Government through FCT/MCTES, DOI 10.54499/UIDB/00324/2020. 

Author is supported by FCT grant UI.BD.154694.2023.

 \end{document}